\numberwithin{equation}{section}
\renewcommand{\emph}[1]{{\bf #1}}
\DeclareMathOperator{\intd}{d}
    \renewcommand{\MR}[1]{}
\newtheorem{theorem}{Theorem} [section]
\newtheorem*{theorem*}{Theorem}
\newtheorem{lemma}[theorem]{Lemma}
\theoremstyle{definition}
\newtheorem{definition}[theorem]{Definition}
\newtheorem{question}[theorem]{Question}
\newtheorem{example}[theorem]{Example}
\newtheorem{remark}[theorem]{Remark}
\newtheorem{notation}{Notation}
\theoremstyle{remark}
\newlist{enumlemma}{enumerate}{3} 
\setlist[enumlemma]{label*={ (\alph*)}, ref= {(\alph*)} }
\newlist{enumcount}{enumerate}{3} 
\setlist[enumcount]{label*={ (\arabic*)}, ref= {(\arabic*)} }
\newcommand{\diff}{\mathrm{Diff}}
\newcommand{\Diff}{\diff}
\newcommand{\Sl}{\mathrm{SL}}
\def\SL{\Sl}
\newcommand{\norm}[1]{\left\lVert#1\right\rVert}
\DeclareFontFamily{U}{wncy}{}
\DeclareFontShape{U}{wncy}{m}{n}{<->wncyr10}{}
\DeclareSymbolFont{mcy}{U}{wncy}{m}{n}
\DeclareMathSymbol{\Sh}{\mathord}{mcy}{"58}
\renewcommand{\phi}{\varphi}
\newcommand{\Rcal}{ {\mathcal R}}
\newcommand{\Rbb}{{\mathbb R}}
\newcommand{\Zbb}{{\mathbb Z}}
\title[Partially hyperbolic lattice actions on $2-$step nilmanifolds]{Partially hyperbolic lattice actions on $2$-step nilmanifolds}
\author[H.~Lee]{Homin Lee}
\author[S.~Sandfeldt]{Sven Sandfeldt}
\address{Northwestern University, Evanston, IL 60208, USA}
\email{homin.lee@northwestern.edu}
\address{Department of mathematics, Kungliga Tekniska högskolan, Lindstedtsvägen 25, SE-100 44 Stockholm, Sweden.}
\email{svensan@kth.se}
\date{\today}
\keywords{Partially hyperbolic systems, Nilmanifolds, Lie groups, Higher rank lattices, Nilpotent Lie groups, Dynamical systems}
\subjclass[2020]{37C85}
\long\def\symbolfootnote[#1]#2{\begingroup\def\thefootnote{\fnsymbol{footnote}}
\footnote[#1]{#2}\endgroup}
\begin{document}
%\symbolfootnote[0]{\it Preliminary version.  Updated \today. }% \\ Send comments to \href{mailto:awb@northwestern.edu}{\normalfont \nolinkurl { awb@northwestern.edu}}}

\let\oldtocsection=\tocsection
\let\oldtocsubsection=\tocsubsection 
\let\oldtocsubsubsection=\tocsubsubsection
 
\renewcommand{\tocsection}[2]{\hspace{0em}\oldtocsection{#1}{#2}}
\renewcommand{\tocsubsection}[2]{\hspace{1em}\oldtocsubsection{#1}{#2}}
\renewcommand{\tocsubsubsection}[2]{\hspace{2em}\oldtocsubsubsection{#1}{#2}}
\setcounter{tocdepth}{1}

%%%%%%%%%%
%\documentclass[12pt]{amsart}
%\usepackage[top=80pt,bottom=80pt,left=110pt,right=110pt]{geometry}
%\usepackage{graphicx} % Required for inserting images
%\usepackage{amsmath}
%\usepackage{enumitem}
%\usepackage{amssymb}
%\usepackage{xcolor}
%\usepackage{tikz}
%\usepackage{amsthm}
%\usepackage{hyperref}

%\newtheorem{theorem}{Theorem}[section]
%\newtheorem{notation}{Notation}[section]

%\newtheorem{proposition}{Proposition}[section]
%\newtheorem{lemma}{Lemma}[section]
%\newtheorem{corollary}{Corollary}[section]
%\newtheorem{claim}{Claim}[section]

%\newtheorem{mainTheorem}{Theorem}
%\renewcommand*{\themainTheorem}{\Alph{mainTheorem}}

%\theoremstyle{definition}
%\newtheorem{definition}{Definition}[section]
%\newtheorem{example}{Example}[section]
%\newtheorem{remark}[theorem]{Remark}

%\counterwithin{equation}{section}
%\renewcommand\qedsymbol{$\blacksquare$}
%\renewcommand{\baselinestretch}{1.10} 

%\setlength{\parindent}{0cm}
%\setlength{\parskip}{0.4cm}

%\newcommand{\norm}[1]{\left\lVert#1\right\rVert}
%\newcommand{\intd}{{\rm d}}
%\newcommand{\interior}[1]{%
 % {\kern0pt#1}^{\mathrm{o}}%
%}

\begin{abstract}
We prove global rigidity results for actions of higher rank lattices on nilmanifolds containing a partially hyperbolic element. We consider actions of higher rank lattices on tori or on $2-$step nilpotent nilmanifolds, such that the actions contain a partially hyperbolic element with $1-$dimensional center. In this setting we prove, under a technical assumption on the partially hyperbolic element, that any such action must be by affine maps. This extends results from \cite{BrownRodriguez-HertzWang2017} to certain lattice actions that are not Anosov.
\end{abstract}

\maketitle
\tableofcontents
\section{Introduction}

Let $G$ be a connected semisimple Lie group without compact factors and with finite center. Moreover, assume that every simple factor of $G$ has $\mathbb{R}-$rank at least $2$. Let $\Gamma$ be a lattice in $G$. We call such $G$ a \textbf{higher rank semisimple Lie group} and call such a lattice $\Gamma\leq G$ a \textbf{higher rank lattice}. Note that there are many other semisimple Lie groups with $\mathbb{R}-$rank at least $2$, such as $\SL(2,\Rbb)\times \SL(2,\Rbb)$, we will, however, not consider these groups as higher rank groups (since they have simple factors of $\mathbb{R}-$rank $1$) in this paper. 

For a higher rank lattice $\Gamma$, many rigid phenomena are expected to hold, and many have been proved. One such rigidity phenomenon is \textbf{Margulis' superrigidity}, which says that any finite dimensional representation of $\Gamma$ can be obtained by restricting a representation of $G$ (up to a compact error), see for instance, \cites{Mar91,MorrisArith}. A $\Gamma-$action on a smooth manifold can be viewed as a non-linear representation of $\Gamma$, so, motivated by Margulis' superrigidity, it is natural to try classifying smooth $\Gamma-$actions. This research program was initiated by Zimmer and is now known as \textbf{the Zimmer program}, see \cites{Fishersuv, Brownsurv}. One of the main topics in the Zimmer program is the classification of smooth $\Gamma-$actions on manifolds, when one, or more, elements of the action have some special dynamical property. One of the main questions in this direction is if higher rank lattice actions with some hyperbolicity are algebraic (see, for instance, \cite{Gorodnik2007}*{Conjecture 5}):
\begin{question}\label{Question:MainQuestion}
{\it Let $M$ be a closed, smooth manifold and $\alpha:\Gamma\to{\rm Diff}^{\infty}(M)$ a smooth $\Gamma-$action with $\gamma_{0}\in\Gamma$ such that $f = \alpha(\gamma_{0})$ is a partially hyperbolic diffeomorphism\footnote{A diffeomorphism $f$ is partially hyperbolic if the tangent space splits into a contracting part, an expanding part and a center part which is dominated by the stable and the unstable directions, see Section \ref{SubSec:FiberedPH}.}. Is $\alpha$ smoothly conjugated to an algebraic action? More precisely, is $M$ diffeomorphic to a homogeneous space, and $\alpha$ smoothly conjugated to an action by affine maps on this homogeneous space?}
\end{question}

Question \ref{Question:MainQuestion} can be divided into two separate parts;
\begin{enumerate}
    \item if a manifold $M$ admits a higher rank lattice action $\alpha$ that contains a partially hyperbolic element, does it carry a homogeneous structure?
    \item can one classify all higher rank lattice actions on homogeneous spaces when the action contains a partially hyperbolic element?
\end{enumerate}

In this paper, we focus on the second part of the question. In the extreme case, when we assume that the action contains an Anosov diffeomorphism (so there is no center direction), the second question has been completely answered by A. Brown, F. Rodriguez Hertz, and Z. Wang \cite{BrownRodriguez-HertzWang2017}. In \cite{BrownRodriguez-HertzWang2017}, the authors show that smooth Anosov actions on (infra-)nilmanifolds, by higher rank lattices, are smoothly conjugated to actions by affine maps, provided that the action lifts to the universal cover. The proofs in \cite{BrownRodriguez-HertzWang2017} rely in an essential way on the fact that the action contains an Anosov element, since the Anosov element is topologically conjugated to an affine map \cites{Franks1969,Manning1974}. This conjugacy is a starting point for producing a conjugacy for the lattice action. In contrast, a partially hyperbolic diffeomorphism on a nilmanifold is not, in general, topologically conjugated to an affine map. The main novelty of this paper is producing a topological conjugacy from a $\Gamma-$action that contains a partially hyperbolic element to an affine action. This argument uses, in an essential way, that the action is of higher rank and fails in rank $1$ (in fact, on the manifolds considered in Theorems \ref{MainThm:AbelianCase} and \ref{MainThm:NonAbelianCase}, generic partially hyperbolic diffeomorphisms are not conjugated to affine maps). Preceding \cite{BrownRodriguez-HertzWang2017}, local rigidity for Anosov lattice actions on nilmanifolds was established in \cite{KatokSpatzier1997} (see also \cites{KatokLewis1991,Hurder1992}). Later Margulis and Qian \cite{Margulis--Qian} showed global topological rigidity for higher rank Anosov lattice actions on nilmanifolds under the additional assumption that the action preserves a fully supported measure. In fact, in \cite{Margulis--Qian}, they use a weaker hyperbolicity assumption, called {\bf weak hyperbolicity}, so the action as a whole does not need to contain an Anosov diffeomorphism. However, they still assume that there are enough partially hyperbolic elements so that the full tangent space is generated by the sum of many stable distributions of partially hyperbolic elements of the action.

A key tool when studying Anosov $\Gamma-$actions has been the theory of smooth $\Zbb^{k}$ or $\Rbb^{k}-$actions. To get a smooth conjugacy in \cite{BrownRodriguez-HertzWang2017}, the authors use previous rigidity results on \textbf{higher rank $\mathbb{Z}^{k}-$actions}\footnote{See Definition \ref{Def:HigherRankAbelianAction}} \cite{Rodriguez-HertzWang2014}, which showed that a higher rank Anosov $\Zbb^{k}-$ action on a nilmanifold is smoothly conjugated to an affine action. More generally, one of the main reasons we expect rigidity phenomenons to occur for higher rank lattice actions is that higher rank lattices contain a higher rank free abelian group. There is a parallel research program studying rigidity properties of higher rank abelian actions. A main question in this area is the \textbf{Katok--Spatzier conjecture}, see \cite{KatokSpatzier1994}. The Katok--Spatzier conjecture states that: if $\alpha:\Zbb^{k}\to \Diff(M)$ is a smooth action on a closed manifold, $\alpha$ have no rank$-1$ factor\footnote{roughly, the action has a rank$-1$ factor if it factors through a $\Zbb$-action on some quotient, see Section \ref{SubSec:HigherRankActions}.}, and there is one element $\mathbf{n}\in \Zbb^{k}$ such that $\alpha(\mathbf{n})$ is an Anosov diffeomorphism, then $M$ is diffeomorphic to a (infra-)nilmanifold and $\alpha$ is smoothly conjugated to an affine action. Again, this conjecture can be decomposed into two different parts, 1) providing a homogeneous structure on the manifold, and 2) classifying all Anosov $\Zbb^{k}-$actions on (infra-)nilmanifolds provided that there is no rank$-1$ factor. In \cite{Rodriguez-HertzWang2014} F. Rodriguez Hertz and Z. Wang answered the second question by proving that all such actions are (up to a $C^{\infty}$ coordinate change) by affine maps.

Leaving the Anosov setting, if we only assume that there exists a partially hyperbolic diffeomorphism, much less is known concerning the classification of $\Gamma-$actions. Note, however, that there are many algebraic models of $\Gamma-$actions that do not contain Anosov diffeomorphisms, including the models considered in this paper (see Theorems \ref{MainThm:AbelianCase} and \ref{MainThm:NonAbelianCase}). One direction in which many results have been produced is \textbf{local rigidity} of higher rank actions with a partially hyperbolic element. That is, we fix a model action $\alpha_{0}$ and study actions in a small neighborhood around $\alpha_{0}$. For abelian higher rank actions on nilmanifolds, the local question was initially solved on tori and is now solved on all nilmanifolds \cites{Wang2022, DamjanovicKatok2010}. The proofs in both \cites{Wang2022,DamjanovicKatok2010} use a \textbf{KAM scheme} to produce a conjugacy, this is an inherently local method that does not generalize to the non-local setting.  In \cite{FisherMargulis2009} D. Fisher and G. Margulis proved that all affine $\Gamma-$actions on homogeneous spaces are locally rigid, completely resolving the question of local rigidity around affine model systems. Previously, local rigidity of partially hyperbolic lattice actions on tori was known \cite{NiticaTorok}. As mentioned above, in \cite{Margulis--Qian} Margulis and Qian showed a global rigidity result, but only for actions that have enough partially hyperbolic elements so that the sum of stable distributions of many elements covers the entire tangent space.  

Recently, the second author proved global rigidity of partially hyperbolic higher rank $\mathbb{Z}^{k}-$actions on Heisenberg nilmanifolds, provided that the partially hyperbolic element has $1-$dimensional center (and satisfy a technical hypothesis on the stable and unstable leaves) \cite{Sandfeldt2024}. Also, the first author classified partially hyperbolic $\Gamma-$actions on manifolds of certain dimensions, under the assumption that the action has an invariant volume \cite{HL:dom}. However, in general, global rigidity of partially hyperbolic $\Gamma-$actions on nilmanifolds is not well understood. 

In this paper, we investigate higher rank lattice actions with a partially hyperbolic diffeomorphism. Roughly, we give an affirmative answer to the second part of Question \ref{Question:MainQuestion} on certain nilmanifolds when the $\Gamma-$action contains a partially hyperbolic diffeomorphism with $1-$dimensional center. For precise statements see Theorems \ref{MainThm:AbelianCase} and \ref{MainThm:NonAbelianCase}. The main novelty of this paper is that we do not assume the existence of an Anosov element, so there is, a priori, no single diffeomorphism in the action that is topologically conjugated to an affine map (as opposed to \cites{BrownRodriguez-HertzWang2017,KatokLewis1991,Hurder1992}, where results of Franks and Manning \cites{Franks1969,Manning1974} guarantee that the Anosov diffeomorphisms in the action are conjugated to affine maps). Moreover, the algebraic models considered do not support Anosov diffeomorphisms, so our results are the first global rigidity results on these manifolds.

Throughout the paper, we will use the following notation:
\begin{notation}\label{nota1}
    Let $G$ be a connected semisimple Lie group without compact factors, and with finite center. Moreover, assume that every simple factor of $G$ has $\mathbb{R}-$rank at least $2$. Let $\Gamma$ be a lattice in $G$.
\end{notation}

% \cite{Wang2022},\cite{KatokLewis1991},\cite{DamjanovicKatok2010},\cite{KatokSpatzier1994}
%   Recently, major advance in this direction came from Brown, F. Rodriguez-Hertz and Z. Wang in \cite{BrownRodriguez-HertzWang2017} where they prove that any smooth $\Gamma-$action on a compact nilmanifold containing an Anosov diffeomorphism is smoothly conjugated to an algebraic action. Before \cite{BrownRodriguez-HertzWang2017}, 

% Nevertheless, without assuming the existence of Anosov diffeomorphisms, some rigidity results were known under further assumptions, for instance, \cite{FisherMargulis2009},\cite{HL:Dom}. 

% %For instance, the local rigidity of any affine $\Gamma$ actions is obtained by Margulis--Fisher, \cite{Margulis--Fisher}. Under the dimension assumption, the classifications on manifolds by a higher rank lattice with a partially hyperbolic diffeomorphism in certain dimension by the first author's work \cite{HL:Dom}.

\subsection{Main theorems}
To state our main theorems, we need a notion of partially hyperbolic diffeomorphisms with well-behaved stable and unstable foliations. We say that a diffeomorphism $f:X\to X$ is \textbf{partially hyperbolic} if the tangent bundle splits into $Df-$invariant bundles $TX = E^{s}\oplus E^{c}\oplus E^{u}$ where $E^{s}$ is contracted, $E^{u}$ is expanded, and the behaviour along $E^{c}$ is dominated by the behaviour along $E^{s}$ and $E^{u}$ (for a precise definition see Section \ref{SubSec:FiberedPH}). We say that $f$ is \emph{${\rm QI}-$partially hyperbolic} if $f$ is partially hyperbolic and the stable and unstable foliations of $f$ lift to foliations with quasi-isometric leaves in the universal cover $\Tilde{X}$ (see Definition \ref{Def:QuasiIsometricLeaves}).

Our first main theorem is about higher rank lattice actions on $2-$step nilmanifolds with $1-$dimensional derived subgroup, that contain a QI-partially hyperbolic element. Any affine map on such a nilmanifold has a neutral direction (along the derived subgroup), so any action by affine maps is either parabolic or partially hyperbolic, but not Anosov. We show that, conversely, if an action by a higher rank lattice contains a QI-partially hyperbolic diffeomorphism then the action is (after a smooth coordinate change) by affine maps.
\begin{theorem}\label{MainThm:NonAbelianCase}
    Let $G$ and $\Gamma$ be as in \Cref{nota1}. Let $X_{\Lambda} = \Lambda\setminus N$ be a $2-$step nilmanifold with $\dim[N,N] = 1$. Let $\alpha\colon \Gamma\to{\rm Diff}^{\infty}(X_{\Lambda})$ be a smooth $\Gamma$ action on $X_{\Lambda}$. Assume that 
    \begin{enumerate}[label = (\roman*)]
    \item there is an element $\gamma_{0}\in\Gamma$ such that $\alpha(\gamma_{0})$ is ${\rm QI-}$partially hyperbolic with $1-$dimensional center and
	\item $\alpha$ lifts to an action on the universal cover $N$.
    \end{enumerate}
   Then there is a homomorphism $\Hat{\rho}:\Gamma\to{\rm Aff}(X_{\Lambda})$ and a diffeomorphism $H:X_{\Lambda}\to X_{\Lambda}$ such that
   \begin{align}
    H(\alpha(\gamma)x) = \Hat{\rho}(\gamma)H(x)
   \end{align}
   for all $\gamma\in\Gamma$. There is also a finite index subgroup $\Gamma'\leq\Gamma$ such that $\Hat{\rho}|_{\Gamma'}:\Gamma'\to{\rm Aut}(X_{\Lambda})$.
\end{theorem}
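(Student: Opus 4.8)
The plan is to first extract the algebraic data and then build and upgrade a conjugacy in two transverse directions. Write $\mathfrak{n}=\mathfrak{v}\oplus\mathfrak{z}$ with $\mathfrak{z}=[\mathfrak{n},\mathfrak{n}]$ the one-dimensional derived (hence central) subalgebra, let $Z\leq X_{\Lambda}$ be the corresponding central circle, and let $\pi\colon X_{\Lambda}\to\mathbb{T}:=X_{\Lambda}/Z$ be the induced torus fibration, where $\dim\mathbb{T}=\dim\mathfrak{v}$. Using the lift (ii), the action on $\pi_{1}(X_{\Lambda})=\Lambda$ produces a homomorphism $\rho\colon\Gamma\to\mathrm{Aut}(\Lambda)\hookrightarrow\mathrm{Aut}(\mathfrak{n})\subset\mathrm{GL}(\mathfrak{n})$. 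First I would apply Margulis superrigidity \cites{Mar91,MorrisArith} to $\rho$: after passing to a finite-index $\Gamma'\leq\Gamma$ it is the restriction of a rational representation of $G$, which defines the candidate automorphism data and hence $\hat{\rho}\colon\Gamma\to\mathrm{Aff}(X_{\Lambda})$ with $\hat\rho|_{\Gamma'}$ valued in $\mathrm{Aut}(X_{\Lambda})$. Since $\mathfrak{z}$ is characteristic and one-dimensional, $\Gamma$ acts on it through a character $\Gamma\to\mathbb{R}^{\times}$; as higher rank lattices have finite abelianization (property (T)), this character is trivial on $\Gamma'$, so $\hat\rho(\Gamma')$ is neutral along the center, matching the hypothesis that $\alpha(\gamma_{0})$ has neutral one-dimensional center tangent to $Z$.

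Second, I would produce a topological conjugacy on the base $\mathbb{T}$. Because the derived direction is exactly the center of $\alpha(\gamma_{0})$, the QI-partial hyperbolicity should force the center foliation of $\alpha(\gamma_{0})$ to coincide, at least coarsely after lifting to $N$, with the $Z$-fibers, so that $\alpha$ descends to a $\Gamma$-action $\bar\alpha$ on $\mathbb{T}$ for which $\bar\alpha(\gamma_{0})$ is Anosov. The key mechanism is a Franks--Manning-type shadowing argument in the universal cover: the lift $\tilde\alpha(\gamma_{0})$ lies at bounded distance from the affine map $\tilde{\hat\rho}(\gamma_{0})$ (both induce $\rho(\gamma_{0})$ on $\Lambda$ and $N$ has polynomial growth), and the quasi-isometric stable and unstable leaves of $\tilde\alpha(\gamma_{0})$ must shadow the affine horospherical cosets; contraction and expansion then pin down a continuous equivariant map, uniquely modulo the noncontracted, nonexpanded center. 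Projecting along $\pi$ removes the center ambiguity and yields a homeomorphism $\bar h$ conjugating $\bar\alpha(\gamma_{0})$ to the linear Anosov map $\bar{\hat\rho}(\gamma_{0})$; invoking the global rigidity of higher rank Anosov actions on tori \cites{Franks1969,Manning1974,Rodriguez-HertzWang2014,BrownRodriguez-HertzWang2017} upgrades $\bar h$ to a smooth conjugacy of the full torus action $\bar\alpha$ with $\bar{\hat\rho}$.

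The main obstacle is the center direction, and this is exactly where the higher rank hypothesis is indispensable and where rank one fails. After conjugating the base, $\alpha$ becomes a $\Gamma$-action on a circle bundle over $\mathbb{T}$ whose base dynamics agrees with the affine model, and the remaining content is the fiberwise (center) dynamics, where the affine model is the neutral rotation (trivial on $\Gamma'$). A single partially hyperbolic diffeomorphism can realize essentially arbitrary circle dynamics in its neutral center, so no one-element argument can succeed; instead I would use that the $\Gamma$-action restricted to the center describes an action of $\Gamma$ (through the relevant stabilizers and holonomy groups) on the circle, and higher rank lattice actions on the circle have finite image. This rigidity forces the center return maps to be topologically conjugate to the neutral affine rotations, which lets me extend $\bar h$ to a genuine topological conjugacy $h\colon X_{\Lambda}\to X_{\Lambda}$ between $\alpha|_{\Gamma'}$ and $\hat\rho|_{\Gamma'}$. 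Showing that the fiber conjugacies vary continuously over $\mathbb{T}$ and assemble into a global homeomorphism is the technical heart of the argument, and the QI hypothesis is what keeps the center foliation tame enough for this assembly.

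Finally, I would upgrade $h$ from topological to $C^{\infty}$ and pass from $\Gamma'$ back to $\Gamma$. Smoothness along the stable and unstable foliations should follow from nonstationary normal-form and linearization theory for the embedded higher rank abelian action (as in \cites{Rodriguez-HertzWang2014,BrownRodriguez-HertzWang2017}), smoothness along the center from the rigidity established above, and Journé's lemma would then glue these into global $C^{\infty}$ regularity of $h$; alternatively one could feed the topological conjugacy into the Fisher--Margulis local rigidity machinery \cite{FisherMargulis2009}. The conjugacy $h$ is first obtained for $\alpha|_{\Gamma'}$; a standard induction over the finite quotient $\Gamma/\Gamma'$ then promotes $\hat\rho$ to a homomorphism on all of $\Gamma$, valued in $\mathrm{Aff}(X_{\Lambda})$ and in $\mathrm{Aut}(X_{\Lambda})$ on $\Gamma'$, intertwined with $\alpha$ by a single diffeomorphism, which completes the proof.
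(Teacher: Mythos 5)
Your outline diverges from the paper's proof at several points, and two of the divergences are genuine gaps rather than alternative routes.

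First, the center direction. You propose to control the fiberwise dynamics by viewing ``the $\Gamma$-action restricted to the center'' as an action of $\Gamma$ on a circle and invoking finiteness of higher rank lattice actions on $S^{1}$. But the center leaves are permuted by $\alpha$, not preserved: a generic leaf $W^{c}(x)$ has trivial stabilizer in $\Gamma$ (its base point has infinite $\rho$-orbit), so there is no group action on a single circle to which Ghys/Burger--Monod/Deroin--Hurtado applies, except on the countably many periodic fibers. The object one actually has is a $\mathrm{Diff}(\mathbb{T})$-valued \emph{cocycle} over the linear base action, and the paper's mechanism is cocycle rigidity: the Witte Morris--Zimmer theorem (Theorem \ref{Thm:SolvingDiffTValuedCocycle}) makes this cocycle cohomologous to an isometric one, and the Avila--Viana invariance principle upgrades the resulting family of invariant measures on center leaves from measurable to continuous (Lemma \ref{lem:cocycleproblem}). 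That continuity is exactly what you flag as ``the technical heart'' of assembling the fiber conjugacies, and your proposal supplies no substitute for it. Moreover, in the non-abelian case the partially hyperbolic element is accessible, so $E^{s}\oplus E^{u}$ is not integrable and there is no $\alpha$-invariant horizontal foliation; the problem cannot be decoupled into ``base plus independent circle fibers.'' The paper instead builds, for each coarse Lyapunov exponent, a translation action $\eta_{[\chi]}$ covering the corresponding translation on $\mathbb{T}^{d}$, and recovers the $2$-step nilpotent structure from the commutator relations $\eta_{-[\chi]}(-w)\eta_{[\chi]}(-v)\eta_{-[\chi]}(w)\eta_{[\chi]}(v)=\eta_{c}(s(v,w))$ (Lemma \ref{L:SymplecticPairRelations}), obtaining a simply transitive nilpotent group of homeomorphisms of $N$ whose orbit map is the conjugacy. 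Your circle-bundle-with-neutral-rotations picture does not explain how the Heisenberg structure is reconstructed.

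Second, the base and the smoothness upgrade. The quotient map $\Phi\colon X_{\Lambda}\to X_{\Lambda}/W^{c}\cong\mathbb{T}^{d}$ is only H\"older, so the induced base action $\bar\alpha$ is a priori only by homeomorphisms; you cannot ``invoke the global rigidity of higher rank Anosov actions on tori'' to smooth it, since those results require a smooth action. In the paper the final smoothness does not come from the base at all: it comes from applying the smooth rigidity theorem for higher rank abelian partially hyperbolic actions \cite{Sandfeldt2024}*{Theorem 1.2} to a subgroup $\mathbb{Z}^{k}\cong\Sigma\leq\Gamma$ that contains a partially hyperbolic element with $1$-dimensional center. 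Producing such a $\Sigma$ is itself nontrivial and occupies Section \ref{Sec:SmoothConjugacyAccessible}: one applies Zimmer's cocycle superrigidity to the derivative cocycle, proves a transversality statement for $D\alpha(\gamma)E^{s}$ against $E^{u}$ on a Zariski dense set, and runs a cone argument to get a Zariski dense semigroup of partially hyperbolic elements. This entire step is absent from your proposal, and without it neither the existence of the needed abelian subaction nor the $C^{\infty}$ regularity of $H$ is established.
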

The existence of a topological conjugacy can also be guaranteed in finite regularity, see \Cref{sec:topconj}.

\begin{remark} In the statement, it is worth mentioning that the assumption about lifting can be achieved in many cases, for instance, when $\Gamma$ is cocompact. For more discussions on the lifting property, see \cite{BrownRodriguez-HertzWang2017}*{Remark 1.5 and Section 9}.
\end{remark}
Note that the manifold $X_{\Lambda}$ in Theorem \ref{MainThm:NonAbelianCase} is a fiber bundle over a torus $X_{\Lambda}\to X_{\Lambda}/[N,N]\cong\mathbb{T}^{d}$. An action $\Hat{\rho}:\Gamma\to{\rm Aut}(X_{\Lambda})$ by automorphisms preserves this fiber bundle structure since the fibers coincide with the orbits of $[N,N]$, which are in turn preserved by automorphisms. We obtain a result similar to Theorem \ref{MainThm:NonAbelianCase} for the trivial fiber bundle $\mathbb{T}^{d}\times\mathbb{T}\to\mathbb{T}^{d}$, considering model actions that respects this fiber bundle structure: $\Hat{\rho}:\Gamma\to{\rm GL}(d,\mathbb{Z})\times 1$.
\begin{theorem}\label{MainThm:AbelianCase}
Let $G$ and $\Gamma$ be as in \Cref{nota1}. Let $\mathbb{T}^{d+1}$ be the $(d+1)$-dimensional torus and let $\alpha:\Gamma\to{\rm Diff}^{\infty}(\mathbb{T}^{d+1})$ be a smooth action. Assume that
\begin{enumerate}[label = (\roman*)]
    \item there is an element $\gamma_{0}\in\Gamma$ such that $\alpha(\gamma_{0})$ is ${\rm QI}$-partially hyperbolic with $1-$dimensional center,
    \item $\alpha$ lifts to an action on the universal cover,
    \item the induced map on fundamental group $\alpha_{*}:\Gamma\to{\rm GL}(d+1,\mathbb{Z})$ takes values in ${\rm GL}(d,\mathbb{Z})\times 1$.
\end{enumerate}
Then there are homomorphisms $\rho:\Gamma\to{\rm GL}(d,\mathbb{Z})$, $\psi:\Gamma\to\mathbb{T}$, and a diffeomorphism $H:\mathbb{T}^{d+1}\to\mathbb{T}^{d+1}$ such that $H\alpha(\gamma)H^{-1}(x,t) = (\rho(\gamma)x,t + \psi(\gamma))$. That is, $\alpha$ is conjugated to an affine action.
\end{theorem}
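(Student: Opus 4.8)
The plan is to first pin down the algebraic model from the homotopy data, then build a topological conjugacy to it, and finally promote that conjugacy to a smooth one using higher rank abelian rigidity. Throughout I write $f := \alpha(\gamma_0)$ and $A := \rho(\gamma_0)$.

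\textbf{Step 1 (linear data).} Hypothesis (iii) says the induced representation on $\pi_1(\mathbb{T}^{d+1}) = \mathbb{Z}^{d+1}$ is block diagonal, $\alpha_* = \rho \oplus 1$, so $\rho : \Gamma \to \mathrm{GL}(d,\mathbb{Z})$ is the candidate linear part and the last circle is the candidate fiber. Lifting to $\mathbb{R}^{d+1} = \mathbb{R}^d \oplus \mathbb{R}$ via (ii), the lift of $f$ is at bounded distance from $A \oplus 1$. I would use the QI hypothesis to match the stable and unstable foliations of $f$ with the linear foliations tangent to the stable and unstable spaces of $A \oplus 1$; since the center of $f$ is one dimensional, the neutral space of $A\oplus 1$ is one dimensional, which forces $A = \rho(\gamma_0)$ to be hyperbolic, i.e.\ Anosov on $\mathbb{T}^d$. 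This identifies the target as the affine action $\bar\alpha(\gamma)(x,t) = (\rho(\gamma)x, t + \psi(\gamma))$ and reduces the theorem to (a) producing a topological conjugacy to $\bar\alpha$ and (b) promoting it to $C^\infty$.

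\textbf{Step 2 (topological conjugacy --- the core).} Fix a higher rank abelian subgroup $\mathbb{Z}^k \leq \Gamma$, with $k\geq 2$, containing $\gamma_0$; this exists since $G$ has $\mathbb{R}$-rank at least $2$. Because $A$ is hyperbolic transverse to the fiber, I would invoke the Franks--Manning construction to produce a continuous $h : \mathbb{T}^{d+1}\to\mathbb{T}^{d+1}$, homotopic to the identity and at bounded distance from it in the cover, with $h f = \alpha_*(\gamma_0)\,h$; the QI hypothesis makes $h$ a homeomorphism along each stable and unstable leaf. Since the elements of $\mathbb{Z}^k$ commute with $\gamma_0$ and the conjugated maps remain at bounded distance from their linear parts, centralizer rigidity of the linear Anosov data forces $h$ to intertwine all of $\alpha|_{\mathbb{Z}^k}$ with its linearization, and one then bootstraps to all of $\Gamma$ using superrigidity to identify $\alpha_*$ together with the rigidity of the normalizer of the linear higher rank action. \textbf{The main obstacle is injectivity of $h$.} A single partially hyperbolic element yields only a semiconjugacy, which in rank one genuinely collapses center (fiber) leaves; this is exactly where the higher rank hypothesis is essential and has no rank one analogue. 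I would rule out collapse by studying the induced dynamics on the one dimensional fiber: the center holonomy gives a cocycle of $\Gamma$ into $\mathrm{Homeo}^+(\mathbb{T})$ over the base action, and the rigidity of higher rank lattice actions on the circle, combined with cocycle rigidity, forces this cocycle to be, after a continuous change of fiber coordinate, a rotation cocycle. In particular no center leaf can be collapsed, so $h$ is a homeomorphism. This injectivity step, marrying the QI leaf structure to the higher rank constraint on the fiber, is the heart of the argument.

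\textbf{Step 3 (affine normal form).} Conjugating by $h$ yields an action by affine maps $(x,t)\mapsto (\rho(\gamma)x + v(\gamma),\, t + \psi(\gamma))$. Since $\rho(\gamma_0)$ is Anosov, the base translation cocycle $v : \Gamma \to \mathbb{T}^d$ over the $\rho$-action is a coboundary, so a further conjugation by a fixed translation removes $v$. The fiber part $\psi : \Gamma \to \mathbb{T}$ is then a homomorphism into an abelian group, so it factors through the finite abelianization of the higher rank lattice $\Gamma$. This produces the continuous conjugacy to $(x,t)\mapsto(\rho(\gamma)x,\, t+\psi(\gamma))$ asserted in the theorem.

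\textbf{Step 4 (smoothness).} Finally I would upgrade $H := h$ to a $C^\infty$ diffeomorphism. Restricted to $\mathbb{Z}^k$, the action is a higher rank partially hyperbolic abelian action with one dimensional center whose linearization carries no rank one factor, because $\rho(\gamma_0)$ is Anosov and $\Gamma$ supplies genuinely higher rank linear data. The smooth rigidity of such abelian actions, in the spirit of Rodriguez Hertz--Wang \cite{Rodriguez-HertzWang2014} for the Anosov base and the second author's work \cite{Sandfeldt2024}, shows that $H$ is smooth along the stable, unstable and center foliations; a Journé-type regularity argument then gives $H \in \mathrm{Diff}^\infty(\mathbb{T}^{d+1})$. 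This scheme parallels the trivial-bundle specialization of Theorem \ref{MainThm:NonAbelianCase}, completing the proof.
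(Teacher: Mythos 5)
Your two-stage outline (topological conjugacy, then smoothness) matches the paper's architecture, but at both stages the mechanism you propose is not the one that works, and the substitutes you offer have genuine gaps.

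In Step 2, Franks--Manning cannot produce a map $h:\mathbb{T}^{d+1}\to\mathbb{T}^{d+1}$ semiconjugating $f$ to $A\oplus 1$: that construction requires the linear model to be hyperbolic, and $A\oplus 1$ has a neutral direction. What one gets instead is only the fibration $\Phi:\mathbb{T}^{d+1}\to\mathbb{T}^{d}$ collapsing the center circles onto the hyperbolic base (Theorems \ref{Thm:PropertiesOfFibration} and \ref{Thm:LatticeEquivariantFibration}). So the obstacle is not injectivity of a candidate $h$ --- there is no candidate until you build one --- and "ruling out collapse of center leaves" addresses the wrong problem. The missing content is how to invert $\Phi$ transversally: one must identify each circle fiber with $\mathbb{T}$ coherently over the base and equivariantly. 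The paper does this by (a) combining the Witte Morris--Zimmer theorem for ${\rm Diff}(\mathbb{T})$-valued cocycles with the Avila--Viana invariance principle to obtain an invariant measure whose center conditionals are Lebesgue and vary \emph{continuously} (your phrase "after a continuous change of fiber coordinate" hides exactly this measurable-to-continuous upgrade), which yields a free transitive circle action $\eta_{c}$ on the fibers commuting with $\alpha$; and (b) constructing, for each coarse Lyapunov exponent of a higher rank abelian subgroup $\Sigma$, a translation action $\eta_{[\chi]}$ covering the linear translations of $E^{[\chi]}_{\Sigma}$ by a leafwise contraction argument. The higher rank hypothesis enters through Lemma \ref{L:SymplecticPairRelations}: minimality of the complementary coarse directions forces the commutator of opposite $\eta_{[\chi]}$'s to be a \emph{constant} center rotation, so these actions generate a simply transitive group on the universal cover and hence a conjugacy for $\alpha|_{\Sigma}$, which is then extended to $\Gamma$ by property (T) cocycle rigidity. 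None of this is in your sketch, and "centralizer rigidity of the linear Anosov data" is not a substitute for it.

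In Step 4 the rigidity results you invoke do not apply: on $\mathbb{T}^{d+1}$ the partially hyperbolic elements are \emph{not} accessible ($E^{s}\oplus E^{u}$ is jointly integrable in the affine model), \cite{Rodriguez-HertzWang2014} concerns Anosov abelian actions, and the smooth rigidity of \cite{Sandfeldt2024} that the paper uses in the Heisenberg case relies on accessibility --- this is precisely why the torus case requires a separate argument (Section 4, "Non-accessible actions"). The paper instead transports the compact $su$-foliation $\mathcal{V}$ through the topological conjugacy, shows it is a $C^{\infty}$ foliation, observes that $\alpha$ restricted to each compact leaf $\mathcal{V}(x)\cong\mathbb{T}^{d}$ is a higher rank \emph{Anosov} lattice action so that \cite{BrownRodriguez-HertzWang2017} makes $\Phi|_{\mathcal{V}(x)}$ smooth leafwise, and then uses Fisher--Margulis local rigidity to get $C^{\infty}$ dependence of $\Phi|_{\mathcal{V}(x)}$ on the leaf before concluding with Journ\'e. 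Your plan to obtain smoothness along $W^{s}$, $W^{u}$, $W^{c}$ directly from abelian partially hyperbolic rigidity would need a theorem that is not available in this non-accessible setting.
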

%Note that any homomorphism $\psi:\Gamma\to\mathbb{T}$, with $\Gamma$ as in the theorem, has a finite image so up to finite index, $\alpha$ is smoothly conjugated to $\rho\times 1$.
\begin{remark} \Cref{MainThm:AbelianCase} generalizes the main results of \cite{NiticaTorok} in two important ways: 1) it is non-local and 2) we consider any higher rank lattice (as opposed to only considering ${\rm SL}(d,\mathbb{Z})$). 
\end{remark}
\begin{remark}
In both Theorems \ref{MainThm:AbelianCase} and \ref{MainThm:NonAbelianCase} we assume that there is a QI-partially hyperbolic $f$ in the action $\alpha$. In Appendix \ref{Appendix:QI-Condition} (Lemma \ref{L:SufficientConditionForQI}) we show that the QI-condition is implied if, for example, $f$ is conjugated (or leaf conjugated) to an affine map on $X_{\Lambda}$. In particular, if $\alpha:\Gamma\to{\rm Diff}^{\infty}(X_{\Lambda})$ is an action that satisfies the conclusion of either Theorem \ref{MainThm:NonAbelianCase} or Theorem \ref{MainThm:AbelianCase} then $\alpha$ also satisfy the assumptions of the corresponding theorem, so the assumptions are sharp in this sense. Motivated by \cites{Hammerlindl2013,HammerlindlPotrie2015}, it seems likely that any partially hyperbolic diffeomorphism with $1-$dimensional center on a manifold as in Theorem \ref{MainThm:NonAbelianCase} or \ref{MainThm:AbelianCase} has quasi-isometric leaves. If this is true, then QI can be removed as an assumption in both Theorems.
\end{remark}

%Especially, combining with Theorem \ref{thm:svenHeis}, we can show global rigidity of  $\Gamma$ action on Heisenberg nilmanifolds assuming existence of one QI-partially hyperbolic diffeomorphism with one dimensional center. 
%\begin{corollary}\label{coro:heis}
%Let $G$ be a higher rank semisimple Lie group without compact factor with finite center. Assume that every simple factor of $G$ has real rank at least $2$. Let $\Gamma$ be an irreducible lattice in $G$. Let $M$ be a Heisenberg nilmanifold. Let $\alpha\colon \Gamma\to \textrm{Diff}^{r}(M)$ a smooth $\Gamma$ action on $M$. \footnote{$r$ can be 1+hol} Assume that there is an element $\gamma_{0}\in\Gamma$ such that $\alpha(\gamma_{0})$ is a QI-partially hyperbolic diffeomorphism with one dimensional center. Then there is a $\Gamma$ invariant absolutely continuous measure $\mu$ on $M$. Moreover, $\alpha$ is topologically conjugate to an affine action. If $\alpha$ is $C^{\infty}$ action then $\alpha$ is $C^{\infty}$ conjugate to an affine action. 
%\end{corollary}

\subsection*{Acknowledge}
H.\ L.\ was supported by an AMS-Simons Travel Grant.  The authors also benefitted from hospitality of Institut Henri Poincar\'e while working on this project (UAR 839 CNRS-Sorbonne Universit\'e, ANR-10-LABX-59-01). S. S. is grateful for the hospitality of Northwestern University, where part of this paper was written. The authors thank David Fisher for useful comments on an earlier version of this paper. The authors also thank Amie Wilkinson and Aaron Brown for useful discussions.

\section{Preliminaries and reductions}
%\subsection{Cocycles and twisted cocycles}
\subsection{(Measurable) Cocycles}
%Throughout this section, let $D$ be a discrete finitely generated group and $(X,\mu)$ be a ergodic $D$ space. Assume that $\mu$ is $D$-invariant. We can trivialize cocycles for certain class of groups. 
We use several cocycle rigidity results to prove the main theorems. The first cocycle rigidity result we will use is when the acting group $D$ has Kazhdan's property (T). %\textcolor{red}{[Need version of this theorem for circle, not just $\mathbb{R}$.]}

\begin{theorem}[{\cite{MR777345}*{Chapter 9}}]\label{Thm:MeasurableTrivializationOfCocycle}
    Let $D$ be a discrete group with Kazhdan's property (T). Let $(X,\mu)$ be an ergodic $D$-space. Assume that $\mu$ is $D$-invariant and let $\beta:D\times X\to \Rbb^{k}$ be a measurable cocycle. Then there exists a measurable map $\phi:X\to \mathbb{R}$ such that \[\beta(\gamma,x)=\phi(\gamma.x)-\phi(x)\] for all $\gamma\in D$ and $\mu$ almost every $x\in X$.
\end{theorem}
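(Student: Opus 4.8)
The plan is to recast the statement as the vanishing of first cohomology for an orthogonal representation and then invoke the Delorme--Guichardet characterization of property (T). First I would form the real Hilbert space $\mathcal{H} = L^2(X,\mu;\Rbb^k)$ and let $D$ act by the Koopman representation $(\pi(\gamma)f)(x) = f(\gamma^{-1}.x)$; since $\mu$ is $D$-invariant this is an orthogonal representation. The cocycle then produces the vectors $b(\gamma)(x) := \beta(\gamma,\gamma^{-1}.x)$, and the cocycle identity $\beta(\gamma_1\gamma_2,x) = \beta(\gamma_1,\gamma_2.x) + \beta(\gamma_2,x)$ gives $b(\gamma_1\gamma_2) = b(\gamma_1) + \pi(\gamma_1)b(\gamma_2)$. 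Thus $b$ is a $1$-cocycle for $\pi$, and $\gamma\cdot f := \pi(\gamma)f + b(\gamma)$ is an affine isometric action of $D$ on $\mathcal{H}$.

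Next, because $D$ has property (T), the Delorme--Guichardet theorem gives that $D$ has property (FH): every affine isometric action of $D$ on a real Hilbert space has a fixed point. Applying this to the action above produces $\phi\in\mathcal{H}$ with $\pi(\gamma)\phi + b(\gamma) = \phi$, i.e. $b(\gamma) = \phi - \pi(\gamma)\phi$ for every $\gamma$. Writing this out, $\beta(\gamma,\gamma^{-1}.x) = \phi(x) - \phi(\gamma^{-1}.x)$ for $\mu$-a.e.\ $x$, and substituting $x\mapsto\gamma.x$ yields $\beta(\gamma,x) = \phi(\gamma.x) - \phi(x)$, the required coboundary. (Here $\phi$ is $\Rbb^k$-valued, so the statement's target $\Rbb$ should read $\Rbb^k$.)

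The hard part is integrability: for a merely measurable cocycle one has $\norm{b(\gamma)}^2 = \int_X |\beta(\gamma,x)|^2\,d\mu$, which may be infinite, so the affine action is at first only formally defined on $\mathcal{H}$. I would remove this obstacle by the character method, which keeps everything bounded. For $\xi\in\Rbb^k$ the function $c_\xi(\gamma,x) = e^{i\langle\xi,\beta(\gamma,x)\rangle}$ is a $\mathbb{T}$-valued cocycle, and it twists the Koopman representation into the genuine unitary representation $(\pi_\xi(\gamma)f)(x) = c_\xi(\gamma,\gamma^{-1}.x)\,f(\gamma^{-1}.x)$ on $L^2(X,\mu)$, which is bounded since $|c_\xi|\equiv 1$. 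Because $\mu$ is finite the constant function is a unit vector, and it is almost $\pi_\xi$-invariant for $\xi$ near $0$; the spectral gap from property (T) then forces a nonzero $\pi_\xi$-invariant vector, whose modulus is constant by ergodicity and whose phase trivializes $c_\xi$. Assembling the resulting phases as $\xi\to 0$ (differentiating in $\xi$ at the origin, where the ambiguity modulo $2\pi$ disappears, and using ergodicity to pin down the additive constants) recovers a measurable $\Rbb^k$-valued primitive $\phi$. This reassembly step is the genuinely delicate point, and is essentially Zimmer's route; alternatively one can truncate $\beta$, solve the coboundary equation for the bounded truncations via property (FH), and pass to the limit using the uniform Kazhdan constant.
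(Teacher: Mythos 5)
First, a point of comparison: the paper offers no proof of this statement — it is imported verbatim from Zimmer's book (Chapter 9) — so your proposal has to be judged as a reconstruction of Zimmer's argument rather than against anything internal to the paper. Your diagnosis is exactly right: the one-line Delorme--Guichardet/(FH) argument fails because $b(\gamma)$ need not lie in $L^{2}(X,\mu;\Rbb^{k})$, and the repair via the circle-valued cocycles $c_{\xi}=e^{i\langle\xi,\beta\rangle}$, the twisted Koopman representations $\pi_{\xi}$, and a Kazhdan pair applied to the almost-invariant constant vector is the standard (indeed Zimmer's) route. Everything up to and including the assertion that for every $\xi$ in a neighborhood of $0$ one has $c_{\xi}(\gamma,x)=f_{\xi}(\gamma.x)\overline{f_{\xi}(x)}$ with $|f_{\xi}|\equiv 1$ is correct and complete, and you are also right that the target of $\phi$ in the printed statement should read $\Rbb^{k}$ rather than $\Rbb$.

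The gap is the reassembly step, which you name but do not carry out, and which is the actual content of the theorem. Each $f_{\xi}$ is determined only up to a unimodular constant (by ergodicity), so ``differentiating in $\xi$ at the origin'' presupposes a coherent, measurable — in fact differentiable — selection $\xi\mapsto f_{\xi}$; producing one is a genuine argument (one first checks that $f_{\xi}f_{\xi'}\overline{f_{\xi+\xi'}}$ is a.e.\ constant and must then untwist this projective multiplicativity), and without it the passage from the family $\{f_{\xi}\}$ to a single $\Rbb^{k}$-valued transfer function $\phi$ is not justified. The clean ways to finish are either to pass to the direct integral $\int_{\Rbb^{k}}^{\oplus}\pi_{\xi}\,d\xi\cong L^{2}(X\times\Rbb^{k})$, i.e.\ to work with the skew product $X\times_{\beta}\Rbb^{k}$ and extract an invariant set of finite positive measure there, or to invoke Schmidt-type results on the essential range of the cocycle. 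Separately, your fallback via truncation does not work as stated: a pointwise truncation of $\beta$ no longer satisfies the cocycle identity, so there is no affine isometric action to which (FH) could be applied, and no obvious way to pass to a limit.
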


%Let $\alpha$ be a $C^{\infty}$ action on a nilmanifold $X$ that lifts on a universal cover $N$. 

%Let $\rho$ be the associated linear data. We assume that there is a homeomorphism  $H:X\to X$ so that $H\circ\alpha(\gamma)=\rho(\gamma)\circ H$ for all $\gamma\in \Gamma$.
%Let $\mathbb{G}$ be an algebraically simply connected, connected, semisimple algebraic group defined over $\mathbb{R}$. Let $G=\mathbb{G}(\mathbb{R})$ be a real points of $\mathbb{G}$. Assume that each simple factors have real rank at least $2$. Let $\Gamma$ be a lattice in $G$.  %$D=\Gamma$, we can solve cocycle problems for all cocycles over ergodic $\Gamma$ actions.

Let $(X,\mu)$ be $D$-ergodic space. Assume that $\mu$ is $D$-invariant. Recall that when we have a cocycle $\beta:D\times X \to H$ and a $H-$space $Y$ then we can define a skew product action on $X\times_{\beta}Y$ by $\gamma.(x,y)=(\gamma.x,\beta(\gamma,x).y)$ for $\gamma\in D$. If the $H$ action preserves a probability measure $\nu$ then the measure $\mu\otimes \nu$ on $X\times_{\beta}Y$ is $D-$invariant. 
\begin{example}[Suspension] Let $G$ and $\Gamma$ be as in \Cref{nota1}. Let $\Rcal:G\times G/\Gamma\to \Gamma$ be a return cocycle. That is, after fixing a fundamental domain $Y$ for $\Gamma$ in $G$ and identifying $Y$ with $G/\Gamma$ the cocycle $\Rcal$ is defined by $\Rcal(g,y)=\gamma$ if and only if $gy\gamma^{-1}\in Y$. Since $G/\Gamma$ is an ergodic $G$-space, if $(X,\mu)$ is a $\Gamma-$ergodic space, then we can define a suspension $G/\Gamma\times_{\Rcal} X$ with a $G$ action. More precisely, the action is given by $g.(y,x)=(gy\Rcal(g,y)^{-1}, \Rcal(g,y).x)$. We call the $\Gamma-$action on $(X,\mu)$ \emph{induced-irreducible} if the $G-$action on the suspension is irreducible (i.e. each simple factor of $G$ acts ergodically).
\end{example}
For ${\rm Diff}(\mathbb{T})-$valued cocycles the following theorem states that we can make the cocycle cohomologous to a $\textrm{Isom}(S^{1})-$valued cocycle.
\begin{theorem}[D.Witte Morris--Zimmer, \cite{MorrisZimmer}]\label{Thm:SolvingDiffTValuedCocycle}
    Let $\Gamma$ be as in \Cref{nota1}. Assume that $(X,\mu)$ is an induced irreducible $\Gamma-$space and $\mu$ is $\Gamma-$invariant. Let $\beta:\Gamma\times X\to{\rm Diff}^{1}(\mathbb{T})$ be a measurable cocycle and $X\times_{\beta}\mathbb{T}$ the skew extension. Then $\mu\otimes\lambda$ is a $\Gamma-$invariant measure on $X\times\mathbb{T}$ where $\lambda$ is the Lebesgue measure on $\mathbb{T}$. Hence, $\beta$ is measurably cohomologous to $\textrm{Isom}(\mathbb{T})$ (as a cocycle into $\textrm{Homeo}(\mathbb{T})$).
    %Suppose that for all $\gamma\in \Gamma$, the map $x\mapsto \beta(\gamma,x)$ is in the bounded subset of $\textrm{Diff}^{2}(S^{1})$. Then there is a measurable map $\phi:X\to \textrm{Homeo}_{+}(S^{1})$ such that 
   % \[\phi(\gamma.x)^{-1}\beta(\gamma,x)\phi(x)\in \textrm{Rot}(S^{1})\] where $\textrm{Rot}(S^{1})$ is the group of rotations on $S^{1}$.
\end{theorem}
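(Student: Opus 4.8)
The plan is to prove the theorem in two stages: first produce a $\Gamma$-invariant probability measure on the skew extension $X\times_\beta\mathbb{T}$ that projects to $\mu$, and then use such a measure to conjugate $\beta$ into an isometry-valued cocycle. The second stage is essentially formal, so the real content — and the place where the higher rank hypothesis is indispensable — is the production of the invariant measure. Indeed, for a single cyclic group or a rank one lattice no such statement holds, which is why the hypotheses on $\Gamma$ enter decisively here.

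For the existence of an invariant measure, first I would pass to the suspension and work with the induced $G$-action of the Example above, extending it to a $G$-action on the $\mathbb{T}$-bundle $\big(G/\Gamma\times_{\Rcal}X\big)\times_\beta\mathbb{T}$. Let $P=MAN\leq G$ be a minimal parabolic subgroup; since $P$ is amenable and acts on the compact fiber $\mathbb{T}$, the restriction of the induced cocycle to $P$ admits a $P$-equivariant measurable map $\Phi$ from the base into the space $\mathrm{Prob}(\mathbb{T})$ of probability measures on the circle. The key step is then to analyze $\Phi$: partitioning according to whether $\Phi(\cdot)$ is nonatomic or has atoms, and using that each simple factor of $G$ acts ergodically (induced-irreducibility) together with the fact that the stabilizer in $\mathrm{Homeo}_+(\mathbb{T})$ of a measure with finite support is highly constrained, one upgrades $\Phi$ to a genuinely $G$-invariant, hence $\Gamma$-invariant, assignment of measures. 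Unwinding the suspension yields a $\Gamma$-invariant probability measure $m$ on $X\times_\beta\mathbb{T}$ projecting to $\mu$. Alternatively, $m$ may be obtained from the vanishing of the bounded Euler class of $\Gamma$ via a measurable version of Ghys' theorem; either way higher rank is used essentially.

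Given the invariant measure $m$, I would disintegrate it over $\mu$ as $m=\int_X m_x\,d\mu(x)$, where $x\mapsto m_x\in\mathrm{Prob}(\mathbb{T})$ is measurable and equivariant, $\beta(\gamma,x)_*m_x=m_{\gamma x}$. Next I would argue that for $\mu$-a.e.\ $x$ the measure $m_x$ is nonatomic and fully supported. To exclude atoms and gaps I would exploit the $C^1$ regularity through the log-derivative cocycle $b(\gamma,(x,\theta))=\log\big|D\beta(\gamma,x)(\theta)\big|$, an $\Rbb$-valued measurable cocycle over $(X\times_\beta\mathbb{T},m)$; on each ergodic component, property (T) of $\Gamma$ forces $b$ to be a measurable coboundary, which is exactly \Cref{Thm:MeasurableTrivializationOfCocycle}, and the resulting fiberwise density shows $m_x$ is equivalent to Lebesgue, in particular nonatomic and fully supported. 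Then the fiberwise cumulative distribution function $H_x:\mathbb{T}\to\mathbb{T}$ of $m_x$ is a homeomorphism pushing $m_x$ to $\lambda$, and $x\mapsto H_x$ is measurable. Setting $\phi(x)=H_x$ defines the conjugating cochain: the cohomologous cocycle $\tilde\beta(\gamma,x)=H_{\gamma x}\circ\beta(\gamma,x)\circ H_x^{-1}$ preserves $\lambda$, hence takes values in $\mathrm{Isom}(\mathbb{T})$, and in these coordinates the invariant measure becomes $\mu\otimes\lambda$, giving both conclusions. Orientation-reversing values are dealt with by the homomorphism $\Gamma\to\Zbb/2$ recording orientation, e.g.\ after passing to an index two subgroup.

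The main obstacle is the first stage. Producing the invariant measure is where all the rigidity lives: one must exclude a cocycle with no invariant fiber measure, which is impossible to do without higher rank. The delicate points are the analysis of the amenable boundary map $\Phi$ (showing that the atomic and nonatomic cases both force invariance), and, in the $C^1$ refinement, verifying that the log-derivative cocycle is genuinely defined and integrable so that \Cref{Thm:MeasurableTrivializationOfCocycle} applies on ergodic components. The remaining technical overhead — the passage between the induced $G$-picture and the original $\Gamma$-picture, and the measurability of the straightening family $x\mapsto H_x$ — is routine by comparison.
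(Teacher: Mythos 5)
First, a caveat on the comparison: the paper does not prove this statement at all --- it is imported verbatim from \cite{MorrisZimmer}, with only the added remark that for regularity $r\ge 3/2$ one could instead invoke \cite{Navas} and property (T). So the only meaningful benchmark is the argument in that reference. Your first stage --- inducing to $G$, using amenability of a minimal parabolic to produce a measurable equivariant map into ${\rm Prob}(\mathbb{T})$, and combining higher rank, induced-irreducibility, and the structure of stabilizers of circle measures to promote it to an invariant measure on the skew extension projecting to $\mu$ --- is indeed the strategy of \cite{MorrisZimmer}, and you are right that this is where the rigidity lives.

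The genuine gap is in your second stage. Applying \Cref{Thm:MeasurableTrivializationOfCocycle} to the log-derivative cocycle $b(\gamma,(x,\theta))=\log\lvert D\beta(\gamma,x)(\theta)\rvert$ over $(X\times_{\beta}\mathbb{T},m)$ yields a measurable $\phi$ with $b(\gamma,p)=\phi(\gamma.p)-\phi(p)$ for $m$-almost every $p$; this does not show that the conditionals $m_{x}$ are equivalent to Lebesgue. The identity cocycle already shows the coboundary equation places no constraint on the conditionals (there $b\equiv 0$ is trivially a coboundary while every family $m_{x}$, atomic or not, is invariant); moreover the equation holds only for $m_{x}$-a.e.\ $\theta$, so it cannot detect gaps in ${\rm supp}(m_{x})$, and the candidate fiberwise density $e^{-\phi(x,\cdot)}$ need not be integrable on fibers since $\phi$ is merely measurable. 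Consequently the atomic and non-fully-supported cases are not excluded --- and these are exactly the cases in which the fiberwise cumulative distribution function fails to be a homeomorphism, so that your conjugated cocycle lands in the stabilizer of a point or a finite set in ${\rm Homeo}(\mathbb{T})$ rather than in ${\rm Isom}(\mathbb{T})$. Ruling these out (or handling them by a separate argument) is precisely where the $C^{1}$ hypothesis and the actual analysis of \cite{MorrisZimmer}, or the property (T) argument of \cite{Navas} in higher regularity, do real work; as written, your proof of the final cohomology statement does not go through.
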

Indeed, when the target group is $\textrm{Diff}^{r}(\mathbb{T})$ with $r\ge 3/2$ then one can deduce the above theorem from \cite{Navas} only using the fact that $\Gamma$ has Kazhdan's property (T).

Note that when $(X,\mu)$ is a torus with $\mu$ the Haar measure and the $\Gamma-$action is given by toral automorphisms, then the $\Gamma-$action is induced irreducible by \cite{HL:DCSR}*{Theorem 5.6}. This is the action we apply \cref{Thm:SolvingDiffTValuedCocycle} to later (\cref{lem:cocycleproblem}).

\subsection{Compact nilmanifolds}

Let $N$ be a (connected) Lie group with Lie algebra $\mathfrak{n}$. Let $\mathfrak{n}_{(j+1)} = [\mathfrak{n},\mathfrak{n}_{(j)}]$, $\mathfrak{n}_{(1)} = \mathfrak{n}$ and $N_{(j)}$ the associated subgroup. We say that $N$ (and $\mathfrak{n}$) is $\ell-$step nilpotent if $\mathfrak{n}_{(\ell)}\neq0$ and $\mathfrak{n}_{(\ell+1)} = 0$. Note that $N_{(j)}$ is normal so we obtain projection maps $\pi_{(j)}:N\to N/N_{(j)}$. We say that a subgroup $\Lambda\leq N$ is a \textbf{lattice} if $\Lambda\setminus N$ has finite volume (or equivalently, in the case of nilpotent groups \cite{CorwinGreenleaf1990}, $\Lambda\setminus N$ is compact). Given a nilpotent lie group $N$ and a lattice $\Lambda$ we will denote the associated \textbf{compact nilmanifold} by $X_{\Lambda} = \Lambda\setminus N$.
\begin{example}
Let $d$ be an even integer and $\omega$ the standard symplectic structure on $\mathbb{R}^{d}$. Let $N = \mathbb{R}^{d}\times\mathbb{R}$ and define a group operation:
\begin{align}
(\mathbf{x},t)\cdot(\mathbf{y},s) = \left(\mathbf{x}+\mathbf{y},t+s + \frac{\omega(\mathbf{x},\mathbf{y})}{2}\right).
\end{align}
The group $N$ is a $2-$step nilpotent Lie group which we call the $(d+1)-$dimensional \textbf{Heisenberg group}. We obtain a lattice in $N$ as $\Lambda = \mathbb{Z}^{d}\times(\mathbb{Z}/2)$. Given any lattice $\Lambda\leq N$ we say that $X_{\Lambda} = \Lambda\setminus N$ is a \textbf{Heisenberg nilmanifold}. It is immediate that $[N,N] = 0\times\mathbb{R}$, so $\dim[N,N] = 1$. That is, $N$ is $2-$step nilpotent with $1-$dimensional derived subgroup. More generally, every $2-$step nilpotent Lie group with $1-$dimensional derived subgroup is a product of a Heisenberg group and a Euclidean space. 
\end{example}
When $N$ is $2-$step nilpotent we will denote the map $\pi_{(2)}:N\to N/[N,N]$ by 
\begin{align}
\pi:N\to N/[N,N].
\end{align}
The map $\pi$ then descends to a map $\pi:X_{\Lambda}\to X_{\Lambda}/[N,N]\cong\mathbb{T}^{d}$ \cite{CorwinGreenleaf1990}.

We denote by ${\rm Aut}(N)$ the automorphisms of $N$. If $\Lambda\leq N$ is a lattice then we write
\begin{align}
{\rm Aut}(X_{\Lambda}) = \{A\in{\rm Aut}(N)\text{ : }A\Lambda = \Lambda\}.
\end{align}
That is, ${\rm Aut}(X_{\Lambda})$ are precisely the automorphisms that descends to maps $X_{\Lambda}\to X_{\Lambda}$. Note that any $A\in{\rm Aut}(X_{\Lambda})$ preserve $[N,N]$, so there is a map 
\begin{align}
q:{\rm Aut}(X_{\Lambda})\to{\rm Aut}(X_{\Lambda}/[N,N])\cong{\rm Aut}(\mathbb{T}^{d})\cong{\rm GL}(d,\mathbb{Z})
\end{align}
such that $q(A)\pi = \pi A$.

\subsection{Fibered partially hyperbolic diffeomorphisms}\label{SubSec:FiberedPH}

Let $X$ be a smooth, closed manifold and $f:X\to X$ a diffeomorphism. We say that $f$ is (absolutely) \textbf{partially hyperbolic} if there is a splitting $TX = E^{s}\oplus E^{c}\oplus E^{u}$ and constants $C\geq1$, $\lambda,\Hat{\lambda},\mu,\Hat{\mu}\in(0,1)$ such that $\lambda < \Hat{\lambda}$, $\mu < \Hat{\mu}$ and
\begin{align}
& \norm{D_{x}f^{n}(v^{s})}\leq C\lambda^{n}\norm{v^{s}},\quad v^{s}\in E^{s} \\
& \frac{1}{C}\Hat{\lambda}^{n}\norm{v^{c}}\leq\norm{D_{x}f^{n}(v^{c})}\leq C\Hat{\mu}^{-n}\lambda^{n}\norm{v^{s}},\quad v^{s}\in E^{s}, \\
& \frac{1}{C}\mu^{-n}\norm{v^{u}}\leq\norm{D_{x}f^{n}(v^{u})}\quad v^{u}\in E^{u}.
\end{align}
That is, $f$ is partially hyperbolic if $E^{s}$ is contracted, $E^{u}$ is expanded, and the behaviour of $E^{c}$ is dominated by the behaviour of $f$ along $E^{s}$ and $E^{u}$. It is well-known that $E^{s}$ and $E^{u}$ integrate uniquely to continuous foliations with smooth leaves, denoted $W^{s}$ and $W^{u}$.
\begin{example}
If $L\in{\rm GL}(d,\mathbb{Z})\cong{\rm Aut}(\mathbb{T}^{d})$ (or more generally if $L\in{\rm Aut}(X_{\Lambda})$ for some compact nilmanifold) then the induced map $L:\mathbb{T}^{d}\to\mathbb{T}^{d}$ (or $L:X_{\Lambda}\to X_{\Lambda}$) is partially hyperbolic if $L$ (or $DL:\mathfrak{n}\to\mathfrak{n}$) has eigenvalues with modulus distinct from $1$. In this case, we can take $E^{s}$ as all eigendirections of eigenvalues with modulues $<1$, $E^{u}$ all eigendirections of eigenvalues with modulus $> 1$ and $E^{c}$ all eigendirections of eigenvalues with modulus $=1$.
\end{example}
Let $\Tilde{X}\to X$ be the universal cover of $X$. The foliations $W^{s}$ and $W^{u}$ lift to foliations $\Tilde{W}^{s}$ and $\Tilde{W}^{u}$ in $\Tilde{X}$. We will denote by $\intd_{\sigma}$, $\sigma = s,u$, the distance between points $x,y\in W^{\sigma}(z)$ along the leaf $W^{\sigma}(z)$.
\begin{definition}\label{Def:QuasiIsometricLeaves}
Let $f:X\to X$ be partially hyperbolic. We say that $f$ is \textbf{QI-partially hyperbolic} if there is a constant $Q$ such that for all $x,y\in\Tilde{W}^{\sigma}(z)$, $\sigma = s,u$, we have
\begin{align}
\intd(x,y)\leq\intd_{\sigma}(x,y)\leq Q\intd(x,y).
\end{align}
That is, the distance function $\intd_{\sigma}$ along $\Tilde{W}^{\sigma}$ is comparable to the distance function $\intd$ in $\tilde{X}$.
\end{definition}
\begin{definition}
Let $f:X\to X$ be partially hyperbolic. We say that $f$ is \textbf{fibered partially hyperbolic} if there is a fiber bundle $p:X\to B$ and an Anosov homeomorphism $\Hat{f}:B\to B$ such that $p\circ f = \Hat{f}\circ p$ and the fibers of $p$ are smooth manifolds whose associated distribution is the center distribution $E^{c}$ (see \cite{AvilaVianaWilkinson2022}).
\end{definition}
\begin{example}
Let $X_{\Lambda}$ be a Heisenberg nilmanifold of dimension $d+1$. Any $A\in{\rm Sp}(d,\mathbb{Z})$ defines an element $L\in{\rm Aut}(X_{\Lambda})$ by $L:(\mathbf{x},t)\mapsto(A\mathbf{x},t)$. If $A$ is a hyperbolic matrix then the projection $\pi:X_{\Lambda}\to\mathbb{T}^{d}$ makes $L$ a fibered partially hyperbolic diffeomorphism. More generally, it is shown in \cite{Sandfeldt2024} that any QI-partially hyperbolic diffeomorphism on $X_{\Lambda}$ with $1-$dimensional center is fibered. In fact, in \cite{Hammerlindl2013} (see also \cite{HammerlindlPotrie2015}) it is shown that in dimension $3$ \textbf{every} partially hyperbolic diffeomorphism on $X_{\Lambda}$ is fibered.
\end{example}
\begin{example}
Let $f:X\to X$ be Anosov and $g:M\to M$ an isometry. The product map $f\times g:X\times M\to X\times M$ is then fibered with the trivial fiber bundle $X\times M\to X$. A special case of this construction, of importance in this paper, is the case when $f = A\in{\rm GL}(d,\mathbb{Z})$ is a hyperbolic toral automorphism and $M = \mathbb{T}$ is a circle.
\end{example}

\subsection{Linear abelian higher rank action on tori}\label{SubSec:HigherRankActions}

Let $\Gamma$ be a discrete group,  $\rho:\Gamma\to{\rm GL}(d,\mathbb{Z})$ a homomorphism, and $\mathbb{Z}^{k}\cong\Sigma\subset\Gamma$ be a subgroup. We say that $\chi:\Sigma\to\mathbb{R}$ is a \textbf{Lyapunov exponent} (or \textbf{Lyapunov functional}) of $\rho|_{\Sigma}$ if there is a vector $\mathbf{v}\neq 0$ such that
\begin{align}
\lim_{n\to\infty}\frac{1}{n}\log\norm{\rho(na)\mathbf{v}} = \chi(a),\quad a\in\Sigma.
\end{align}
Given a Lyapunov exponent $\chi$ we define the associated \textbf{coarse Lyapunov exponent} $[\chi]$ of $\rho|_{\Sigma}$ to be the equivalence class of all exponents $\chi'$ of $\rho|_{\Sigma}$ such that $\chi' = c\chi$ for some constant $c > 0$.
\begin{definition}
For a homomorphism $\rho:\Gamma\to{\rm GL}(d,\mathbb{Z})$ and an abelian subgroup $\Sigma\leq\Gamma$ we define $\Delta_{\Sigma}$ as the collection of all coarse exponents of $\rho|_{\Sigma}$.
\end{definition}
Given a Lyapunov exponent $\chi$ of $\rho|_{\Sigma}$ we define
\begin{align}
E_{\Sigma}^{\chi} := \left\{\mathbf{v}\neq0\text{ : }\lim_{n\to\infty}\frac{1}{n}\log\norm{\rho(na)\mathbf{v}} = \chi(a),\text{ for all }a\in\Sigma\right\}\cup\{0\},
\end{align}
it is immediate that $E_{\Sigma}^{\chi}$ is a subspace. We also define the \textbf{coarse subspace} associated to $[\chi]$ as
\begin{align}
E_{\Sigma}^{[\chi]} = \bigoplus_{\chi'\in[\chi]}E_{\Sigma}^{\chi'}.
\end{align}
Since the image of $\rho|_{\Sigma}$ can be conjugated into simultaneous Jordan form, it is immediate that we have
\begin{align}
\mathbb{R}^{d} = \bigoplus_{[\chi]\in\Delta_{\Sigma}}E_{\Sigma}^{[\chi]}.
\end{align}
That is, we have a decomposition of $\mathbb{R}^{d}$ into coarse subspaces.
\begin{definition}
We say that two coarse exponents $[\chi]$ and $[\eta]$ are dependent if for any two representatives $\chi'$, $\eta'$ there is a constant $c > 0$ such that $\eta' = -c\chi'$. If $[\chi]$ and $[\eta]$ are not dependent then we say that they are independent.
\end{definition}
We will give a definition of higher rank abelian actions formulated in a useful way for our applications. However, the definition is equivalent to the standard definition (see \cite{Sandfeldt2024}*{Lemma 2.2}).
\begin{definition}\label{Def:HigherRankAbelianAction}
We say that $\rho|_{\Sigma}$ is higher rank if for any $[\chi]\in\Delta_{\Sigma}$ the space
\begin{align}
V = \bigoplus_{[\lambda]\neq\pm[\chi]}E_{\Sigma}^{[\lambda]}
\end{align}
defines a minimal translation action on $\mathbb{T}^{d}$.
\end{definition}

\subsection{Reduction to effective actions of lattices in algebraic groups}\label{SubSec:InitialReductions}

Let $\Gamma$ be as in Theorems \ref{MainThm:NonAbelianCase} and \ref{MainThm:AbelianCase}, $X_{\Lambda}$ as in either Theorem \ref{MainThm:NonAbelianCase} or Theorem \ref{MainThm:AbelianCase}, and $\alpha:\Gamma\times X_{\Lambda}\to X_{\Lambda}$ an action satisfying either $(i)$, $(ii)$ from Theorem \ref{MainThm:NonAbelianCase} or $(i)$, $(ii)$, $(iii)$ from Theorem \ref{MainThm:AbelianCase}. Let $\gamma_{0}\in\Gamma$ be the element such that $f = \alpha(\gamma_{0})$ is partially hyperbolic. In particular, for any $n > 0$ we have $\alpha(\gamma_{0}^{n})\neq{\rm id}$ so $\alpha$ has infinite image. It follows that $\ker\alpha\subset\Gamma$ is a normal subgroup that does not have finite index in $\Gamma$. Note that $\Gamma/\ker \alpha$ is still a higher rank lattice, hence, we have proved the following lemma.
\begin{lemma}\label{L:ActionIsEffective}
We may assume without loss of generality that the action $\alpha$ is effective.
\end{lemma}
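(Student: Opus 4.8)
The plan is to replace $\Gamma$ by the quotient $\bar\Gamma := \Gamma/\ker\alpha$ and $\alpha$ by the induced faithful action $\bar\alpha\colon\bar\Gamma\to\Diff^\infty(X_\Lambda)$, and then to verify two things: (a) $\ker\alpha$ has infinite index in $\Gamma$, and (b) $\bar\Gamma$ is again a higher rank lattice in the sense of Notation~\ref{nota1}. Granting these, the reduction is harmless. The action $\bar\alpha$ is effective by construction; it still contains a QI-partially hyperbolic element (the image of $\gamma_0$, with the same center dimension), it still lifts to $N$, and in the setting of Theorem~\ref{MainThm:AbelianCase} it still acts through $\GL(d,\Zbb)\times 1$ on $\pi_1$, since each of these conditions refers only to the diffeomorphisms $\alpha(\gamma)$, which factor through $\ker\alpha$. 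Finally, if the theorems hold for $(\bar\Gamma,\bar\alpha)$, producing $H$ together with $\hat\rho\colon\bar\Gamma\to\Aff(X_\Lambda)$ (respectively $\rho,\psi$), then precomposing with the quotient map $q\colon\Gamma\to\bar\Gamma$ yields the desired data for $(\Gamma,\alpha)$ with the \emph{same} conjugacy $H$; the finite index subgroup in the conclusion of Theorem~\ref{MainThm:NonAbelianCase} pulls back to a finite index subgroup of $\Gamma$. So proving the theorems for effective actions proves them in general.

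For (a), I would use that $f=\alpha(\gamma_0)$ is partially hyperbolic: replacing $f$ by $f^{-1}$ if necessary we may assume $E^{u}\neq 0$, and then for $n$ large $D_xf^{n}$ strictly expands every nonzero $v^{u}\in E^{u}$, so $f^{n}\neq\id$ for all $n\geq 1$. Hence the image of $\gamma_0$ has infinite order in $\alpha(\Gamma)\cong\bar\Gamma$, so $\bar\Gamma$ is infinite and $\ker\alpha$ has infinite index, as already noted in the paragraph preceding the lemma.

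The substance is (b), which is where the higher rank hypothesis enters through Margulis' normal subgroup theorem. If $\Gamma$ is irreducible, the theorem applies directly: a normal subgroup of infinite index, such as $\ker\alpha$, must be finite and central, so $\bar\Gamma$ is the image of $\Gamma$ in $G/F$ for a finite central $F$, hence a lattice in $G/F$, which is again semisimple with finite center, no compact factors, and every simple factor of $\Rbb$-rank at least $2$. For reducible $\Gamma$ one first passes to a finite index subgroup that, up to commensurability, splits as a product $\prod_j\Lambda_j$ of irreducible higher rank lattices in sub-products of $G$; applying the normal subgroup theorem factorwise shows that, modulo finite central pieces, $\ker\alpha$ is the sub-product of those factors it contains, and $\bar\Gamma$ is a lattice in the complementary sub-product of $G$, which is still higher rank semisimple. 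Either way $\bar\Gamma$ satisfies Notation~\ref{nota1}.

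I expect the only genuinely delicate point to be the bookkeeping in the reducible case of (b): one must check that $\ker\alpha$ really decomposes along the factors (a commutator/Goursat-type argument using that the irreducible factors have no infinite-index cofinite normal subgroups) and that the resulting quotient is a \emph{lattice}, not merely a discrete subgroup, in the surviving sub-product. For the irreducible case there is essentially nothing to do beyond quoting the normal subgroup theorem, so all the content is this product-structure argument together with the standard fact that the quotient of a lattice by a finite central subgroup of the ambient group is again a lattice.
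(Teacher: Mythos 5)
Your proposal is correct and follows essentially the same route as the paper: the paper notes that $\alpha(\gamma_0^n)\neq\id$ for all $n>0$ because $f=\alpha(\gamma_0)$ is partially hyperbolic, so $\ker\alpha$ is a normal subgroup of infinite index, and then simply asserts that $\Gamma/\ker\alpha$ is still a higher rank lattice. Your write-up supplies the details the paper leaves implicit (Margulis' normal subgroup theorem, the reducible case, and the check that the hypotheses and conclusions transfer through the quotient), but the underlying argument is the same.
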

In \cite{Sandfeldt2024} a fiber bundle structure $\Phi:X_{\Lambda}\to\mathbb{T}^{d}$ that conjugates $f$ to a hyperbolic automorphism is produced. More precisely, the following holds \cite{Sandfeldt2024}*{Theorem 1.1}.
\begin{theorem}\label{Thm:PropertiesOfFibration}
The following properties hold for $f$:
\begin{enumerate}[label = (\roman*)]
    \item $f$ is dynamically coherent with global product structure,
    \item all foliations $W^{\sigma}$, $\sigma = s,c,u,cs,cu$, are uniquely integrable,
    \item the center foliation $W^{c}$ have compact oriented circle leaves,
    \item there is a Hölder $\Phi:X_{\Lambda}\to\mathbb{T}^{d}$ such that $\Phi^{-1}(\Phi(x)) = W^{c}(x)$, $\Phi$ is homotopic to the projection $\pi$ and $\Phi(fx) = L_{su}\Phi(x)$ where $L_{su}\in{\rm GL}(d,\mathbb{Z})$ is hyperbolic,
\end{enumerate}
moreover, if $N$ is not abelian then
\begin{enumerate}[label = (\roman*)]
    \setcounter{enumi}{4}
    \item $f$ is accessible.
\end{enumerate}
\end{theorem}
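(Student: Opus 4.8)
The plan is to push everything down to the algebraic model on the universal cover, and to extract the fibered structure from one key geometric fact: the derived (central) direction $[\mathfrak n,\mathfrak n]$ is polynomially distorted inside the nilpotent group $N$, so it cannot sit inside a quasi-isometric leaf. First I would lift $f$ to a diffeomorphism $\widetilde f:N\to N$ and, using that $f$ is homotopic to an affine map and that $X_\Lambda$ is compact, record that $\widetilde f$ stays a bounded distance from an affine automorphism $A$ of $N$. The linear part of $A$ preserves the one-dimensional central subalgebra $[\mathfrak n,\mathfrak n]$, acting there by a scalar $\delta$, and induces $\bar A\in\mathrm{GL}(d,\mathbb Z)$ on $\mathfrak n/[\mathfrak n,\mathfrak n]\cong\mathbb R^d$; the bracket form $\omega$ on $\mathbb R^d$ satisfies $\omega(\bar A\,\cdot,\bar A\,\cdot)=\delta\,\omega$, so $\bar A$ is conformally symplectic on the Heisenberg part.

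The central step is to identify $E^c$ with the central direction. By the Brin--Burago--Ivanov / Hammerlindl comparison for quasi-isometric foliations in nilpotent covers, each leaf of $\widetilde W^s$ and of $\widetilde W^u$ lies at bounded Hausdorff distance from a coset of the linear stable, resp. unstable, subspace of $A$, so those linear leaves are themselves quasi-isometrically embedded in $N$. Now suppose $|\delta|\neq 1$: then $[\mathfrak n,\mathfrak n]$ would be contained in the linear stable or unstable subspace. But $[\mathfrak n,\mathfrak n]$ is \emph{polynomially distorted} in $N$ (in a Heisenberg factor the word length of the $n$-th central element grows like $\sqrt n$), so no subspace containing it can be quasi-isometrically embedded, contradicting the $\mathrm{QI}$ hypothesis. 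Hence $|\delta|=1$, which forces $L_{su}=\bar A$ to be hyperbolic on $\mathbb R^d$ and, since $\dim E^c=1=\dim[\mathfrak n,\mathfrak n]$, identifies $E^c$ with the central direction and $W^c$ with the foliation by $[N,N]$-orbits. As $\Lambda\cap[N,N]$ is a cocompact lattice in $[N,N]\cong\mathbb R$, these orbits are compact oriented circles, giving (iii) and the hyperbolicity claim in (iv).

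With the center pinned down, parts (i)--(ii) should follow by transporting the (manifest) coherence, unique integrability and product structure of the linear model across the bounded-distance comparison: $W^{cs},W^{cu}$ are obtained by saturating $W^{s},W^{u}$ with the circle foliation $W^c$, and in $N$ the splitting $\widetilde W^{s}\times\widetilde W^{c}\times\widetilde W^{u}$ matches the direct-sum decomposition of $\mathfrak n$ up to bounded error, yielding global product structure. For the map in (iv), since $f$ preserves $W^c$ it descends to a homeomorphism $\widehat f$ of the leaf space $X_\Lambda/W^c\cong\mathbb T^d$; the $W^s,W^u$ directions project to a hyperbolic splitting, so $\widehat f$ is Anosov and homotopic to $L_{su}$, hence topologically conjugate to $L_{su}$ by a Hölder homeomorphism $h$ via Franks--Manning. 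Setting $\Phi=h\circ\pi$ then produces a Hölder map homotopic to $\pi$ with $\Phi^{-1}(\Phi(x))=W^c(x)$ and $\Phi(fx)=L_{su}\Phi(x)$.

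For accessibility (v) in the \emph{non-abelian} case, I would use that $E^s,E^u$ project onto the nontrivial stable and unstable subspaces of the hyperbolic conformally symplectic map $\bar A$, and that these subspaces bracket-generate $[\mathfrak n,\mathfrak n]$ because $\omega$ is nondegenerate on the Heisenberg part; thus $E^s$ and $E^u$ together with one bracket span the full tangent space, giving accessibility of the model and, through the leaf comparison, of $f$. The hardest part is the central step: rigorously upgrading the $\mathrm{QI}$ hypothesis into the bounded-distance comparison between the nonlinear invariant foliations and the algebraic ones in the non-abelian cover $N$, and using distortion to exclude a center direction inside the base $\mathbb R^d$. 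Everything downstream is comparatively soft once the center is known to be the distorted central fiber direction.
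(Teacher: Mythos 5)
The paper does not actually prove this statement: it is quoted verbatim from \cite{Sandfeldt2024}*{Theorem 1.1}, so your sketch should be measured against the argument there, whose overall strategy (lift to $N$, compare the lifted invariant foliations with the algebraic model, then run Brin--Burago--Ivanov/Hammerlindl-type arguments) you do follow. The main problem is that your ``central step'' is aimed at the wrong target. The scalar $\delta$ by which the linearization $A=f_{*}$ acts on $[\mathfrak{n},\mathfrak{n}]$ equals $\pm1$ for free: $A$ preserves $\Lambda$, hence the lattice $\Lambda\cap[N,N]\cong\mathbb{Z}$ inside $[N,N]\cong\mathbb{R}$. So the distortion argument excluding $|\delta|\neq1$ establishes nothing beyond what is automatic, and it is vacuous in the abelian case $X_{\Lambda}=\mathbb{T}^{d+1}$, which this theorem must also cover (there the neutrality of the fiber direction comes from hypothesis $(iii)$ of Theorem \ref{MainThm:AbelianCase}, not from distortion of $[N,N]$). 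Meanwhile the step you dispatch in one clause --- ``$|\delta|=1$ forces $L_{su}$ to be hyperbolic'' --- is a non sequitur and is where the real work sits: one must show that the $A$-invariant linear subspaces shadowed by $\widetilde{W}^{s}$ and $\widetilde{W}^{u}$ are genuinely contracted and expanded by $A$ and have dimensions $\dim E^{s}$ and $\dim E^{u}$. This is exactly where the QI hypothesis enters (uniform contraction in the leaf metric transfers to the ambient metric only for quasi-isometrically embedded leaves); the count $\dim E^{s}+\dim E^{u}+1=\dim N$ together with the automatic neutrality of $[\mathfrak{n},\mathfrak{n}]$ then forces $L_{su}$ to be hyperbolic and pins $E^{c}$ against the fiber direction.

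Several downstream steps also have genuine gaps. Dynamical coherence and unique integrability do not ``transport across the bounded-distance comparison'': integrability of $E^{cs}$ and $E^{cu}$ is delicate (compare the non-dynamically-coherent pointwise partially hyperbolic examples on $\mathbb{T}^{3}$) and requires the branching-foliation machinery together with the comparison and the neutrality of the center of the linearization, not the comparison alone. The formula $\Phi=h\circ\pi$ is wrong: the fibers of $\pi$ are the $[N,N]$-orbits, which are not literally the leaves of $W^{c}$ --- the conclusion only asserts that $\Phi$ is \emph{homotopic} to $\pi$ --- so you need $h\circ p$ where $p$ is the quotient by the actual center foliation, after identifying the leaf space with $\mathbb{T}^{d}$ and invoking a topological Franks--Manning theorem for the expansive homeomorphism $\widehat{f}$ (which is only a homeomorphism, not a smooth Anosov map). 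Finally, accessibility is not an invariant of leaf conjugacy or of bounded-distance comparison (the abelian model is leaf conjugate to accessible systems yet is itself non-accessible), so accessibility of $f$ cannot be deduced ``through the leaf comparison'' from bracket-generation in the algebraic model; one has to argue directly, for instance that a proper closed $su$-accessibility class would yield a reduction of the circle bundle $X_{\Lambda}\to\mathbb{T}^{d}$ incompatible with its nonvanishing Euler class when $N$ is non-abelian.
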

We write $\alpha_{*}:\Gamma\to{\rm Aut}(\pi_{1}X_{\Lambda}) = {\rm Aut}(X_{\Lambda})$ for the induced map on fundamental group. Recall that we have a projection map $\pi:X_{\Lambda}\to\mathbb{T}^{d}$ defined by $x\mapsto x[N,N]$ when $N$ is non-abelian and projection onto the first $d$ coordinates when $N$ is abelian. Define a representation $\rho$ of $\Gamma$ by
\begin{align}
\rho:\Gamma\to{\rm GL}(d,\mathbb{Z}),\quad\pi_{*}\alpha_{*}(\gamma) =: \rho(\gamma)\pi_{*}.
\end{align}
Since $\rho(\gamma_{0}) = L_{su}$ is hyperbolic the map $\rho$ has infinite image. Combining Theorem \ref{Thm:PropertiesOfFibration} with \cite{BrownRodriguez-HertzWang2017}*{Theorem 3.2} we obtain a semiconjugacy between $\alpha$ and $\rho$.
\begin{theorem}\label{Thm:LatticeEquivariantFibration}
After possibly dropping to a finite index subgroup $\Gamma'$, the map $\Phi$ from Theorem \ref{Thm:PropertiesOfFibration} is $\Gamma'-$equivariant. That is, for any $\gamma\in\Gamma'$
\begin{align}
\Phi(\alpha(\gamma)x) = \rho(\gamma)\Phi(x)
\end{align}
where $\rho:\Gamma'\to{\rm GL}(d,\mathbb{Z})$. In particular, for any $\gamma\in\Gamma'$ we have $\alpha(\gamma)W^{c}(x) = W^{c}(\alpha(\gamma)x)$.
\end{theorem}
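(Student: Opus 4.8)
The plan is to show that the center–collapsing fibration $\Phi$ produced in \Cref{Thm:PropertiesOfFibration}, which a priori only semiconjugates the single element $f = \alpha(\gamma_{0})$ to the hyperbolic automorphism $L_{su}$, is in fact intertwined with the whole linear representation $\rho$. I would do this by checking that the data of \Cref{Thm:PropertiesOfFibration} together with hypotheses $(i)$--$(ii)$ place us in the setting of \cite{BrownRodriguez-HertzWang2017}*{Theorem 3.2}, applying that theorem to obtain a $\Gamma'$--equivariant continuous semiconjugacy, and then identifying it with $\Phi$ by a uniqueness argument. The relevant inputs are exactly what \Cref{Thm:PropertiesOfFibration} supplies: a H\"older map $\Phi\colon X_{\Lambda}\to\mathbb{T}^{d}$ homotopic to $\pi$, the relation $\Phi f = L_{su}\Phi$ with $L_{su}\in{\rm GL}(d,\mathbb{Z})$ genuinely hyperbolic, and the fact that $\alpha$ lifts to the universal cover.

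The mechanism behind equivariance, which I would either cite or reprove, runs as follows. Lift to the universal cover $N$, writing $\widetilde{\alpha}(\gamma)$ for the lift of $\alpha(\gamma)$ and $\widetilde{\Phi}\colon N\to\mathbb{R}^{d}$ for the lift of $\Phi$. Since $\rho$ is by definition the map induced by $\alpha_{*}$ on the torus quotient, both $\widetilde{\Phi}\circ\widetilde{\alpha}(\gamma)$ and $\rho(\gamma)\circ\widetilde{\Phi}$ transform identically under the deck group $\Lambda$, so their difference
\begin{align}
D_{\gamma} := \widetilde{\Phi}\circ\widetilde{\alpha}(\gamma) - \rho(\gamma)\circ\widetilde{\Phi}
\end{align}
is $\Lambda$--periodic, hence descends to a bounded continuous map $X_{\Lambda}\to\mathbb{R}^{d}$; the goal is to force $D_{\gamma}\equiv 0$. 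For $\gamma$ in the centralizer of $\gamma_{0}$ this is immediate: $\alpha(\gamma)$ commutes with $f$ and $\rho(\gamma)$ commutes with $L_{su}$, so $\rho(\gamma)^{-1}\Phi\alpha(\gamma)$ is again a semiconjugacy from $f$ to $L_{su}$ homotopic to $\pi$. Because the center has been collapsed, $L_{su}$ is genuinely hyperbolic on $\mathbb{T}^{d}$, so any two such semiconjugacies differ by a bounded lift $g$ with $g\circ\widetilde{f} = L_{su}\circ g$, which iteration of $L_{su}$ forces to vanish; thus $\rho(\gamma)^{-1}\Phi\alpha(\gamma) = \Phi$.

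The hard part is upgrading this from the (possibly small) centralizer of $\gamma_{0}$ to a finite-index subgroup of $\Gamma$, i.e. proving that the twisted cocycle $\gamma\mapsto D_{\gamma}$, which satisfies $D_{\gamma_{1}\gamma_{2}} = \rho(\gamma_{1})D_{\gamma_{2}} + D_{\gamma_{1}}\circ\widetilde{\alpha}(\gamma_{2})$, is trivial. This is precisely the content I would borrow from \cite{BrownRodriguez-HertzWang2017}*{Theorem 3.2}, where the higher-rank structure of $\Gamma$ and superrigidity for the linear data $\rho$ are used to kill $D$; it is also the step that forces passage to a finite-index subgroup $\Gamma'$. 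The output is a $\Gamma'$--equivariant continuous $\Psi$ homotopic to $\pi$. Choosing $m\geq1$ with $\gamma_{0}^{m}\in\Gamma'$, equivariance gives $\Psi f^{m} = L_{su}^{m}\Psi$ with $L_{su}^{m}$ still hyperbolic, so the same uniqueness argument yields $\Psi = \Phi$, proving that $\Phi$ itself is $\Gamma'$--equivariant.

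Finally, the assertion about center leaves is a direct corollary: by \Cref{Thm:PropertiesOfFibration} the fibers of $\Phi$ are exactly the center leaves, $\Phi^{-1}(\Phi(x)) = W^{c}(x)$, so for $\gamma\in\Gamma'$ and $z\in W^{c}(x)$ equivariance gives $\Phi(\alpha(\gamma)z) = \rho(\gamma)\Phi(z) = \rho(\gamma)\Phi(x) = \Phi(\alpha(\gamma)x)$, whence $\alpha(\gamma)W^{c}(x)\subseteq W^{c}(\alpha(\gamma)x)$; applying the same inclusion to $\gamma^{-1}$ at $\alpha(\gamma)x$ yields equality.
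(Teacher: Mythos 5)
Your proposal is correct and takes essentially the same route as the paper: the paper's proof consists of verifying the hypotheses of \cite{BrownRodriguez-HertzWang2017}*{Theorem 3.2} exactly as in Section 7 of that reference, using that the image of $\rho$ contains the hyperbolic matrix $L_{su} = \rho(\gamma_{0})$. The extra details you supply (the $\Lambda$-periodic defect $D_{\gamma}$ and the Franks--Manning-type uniqueness argument identifying the resulting equivariant semiconjugacy with $\Phi$) are a correct fleshing-out of what the paper delegates to the citation.
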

\begin{proof}
The proof of the theorem follows precisely as the proof of Theorem $1.3$ in \cite{BrownRodriguez-HertzWang2017}*{Section 7}. Indeed, the image of the map $\rho:\Gamma\to{\rm GL}(d,\mathbb{R})$ contain a hyperbolic matrix (the image of $\gamma_{0}$) so the proof in \cite{BrownRodriguez-HertzWang2017}*{Section 7}, checking that the assumptions in \cite{BrownRodriguez-HertzWang2017}*{Theorem 3.2} holds, applies.
\end{proof}
We will assume in the remainder that we have dropped to a finite index subgroup such that $\Phi(\alpha(\gamma)x) = \rho(\gamma)\Phi(x)$.
\begin{lemma}\label{L:HomotopyRepIsInjective}
We may assume, without loss of generality, that $\rho:\Gamma\to{\rm GL}(d,\mathbb{Z})$ is injective.
\end{lemma}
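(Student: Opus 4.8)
The plan is to identify $K:=\ker\rho$ as a normal subgroup to which Margulis' normal subgroup theorem applies, conclude that $K$ is finite, and then pass to a finite-index subgroup on which $\rho$ becomes injective. Since we have already dropped to a finite-index subgroup in \Cref{Thm:LatticeEquivariantFibration}, and all hypotheses of \Cref{MainThm:NonAbelianCase} and \Cref{MainThm:AbelianCase} are inherited by finite-index subgroups, such a passage is harmless and yields the lemma.

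First I would observe that $K=\ker\rho$ is normal in $\Gamma$ and of infinite index: by \Cref{Thm:PropertiesOfFibration} we have $\rho(\gamma_{0})=L_{su}$, a hyperbolic matrix, so $\rho$ has infinite image and $[\Gamma:K]=\infty$. Because every simple factor of $G$ has $\mathbb{R}$-rank at least $2$, Margulis' normal subgroup theorem applies to (an irreducible) $\Gamma$ and shows that a normal subgroup is either of finite index or finite and central. As $K$ has infinite index, it must be finite. Next, since $\Gamma$ is a finitely generated linear group it is virtually torsion-free (Selberg's lemma), so I would choose a torsion-free finite-index subgroup $\Gamma'\leq\Gamma$. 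Then $K\cap\Gamma'$ is a finite subgroup of a torsion-free group, hence $K\cap\Gamma'=\{\id\}$ and $\rho|_{\Gamma'}$ is injective.

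Restricting the action to $\Gamma'$ preserves the hypotheses: the lifting property and, in the abelian case, the condition $\alpha_{*}(\Gamma')\subset{\rm GL}(d,\mathbb{Z})\times 1$ are immediate, and although $\gamma_{0}$ may not lie in $\Gamma'$, a power $\gamma_{0}^{m}\in\Gamma'$ gives $\alpha(\gamma_{0})^{m}=\alpha(\gamma_{0}^{m})$, which is again ${\rm QI}$-partially hyperbolic with $1$-dimensional center since a power has the same invariant foliations. Replacing $\Gamma$ by $\Gamma'$ and $\gamma_{0}$ by $\gamma_{0}^{m}$, we may assume $\rho$ is injective.

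The step I expect to be the main obstacle is precisely the conclusion that $K$ is finite, which is where the higher-rank assumption enters. For irreducible $\Gamma$ this is immediate as above. For a reducible $\Gamma$---commensurable to a product $\Gamma_{1}\times\cdots\times\Gamma_{k}$ of irreducible higher-rank lattices---one must rule out that $\rho$ annihilates an entire factor $\Gamma_{i}$, since then $K$ would be infinite and no finite-index subgroup could make $\rho$ injective. Here I would use that, by the $\Gamma$-equivariance of $\Phi$ (\Cref{Thm:LatticeEquivariantFibration}), any $\gamma\in K$ satisfies $\Phi(\alpha(\gamma)x)=\Phi(x)$, so $K$ preserves every center leaf $W^{c}(x)$ and acts trivially on the base $\mathbb{T}^{d}$; thus a killed factor would produce a nontrivial fiberwise action of a higher-rank lattice on the $1$-dimensional circle fibers. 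Ruling this out is the crux: one would analyze the resulting ${\rm Diff}(S^{1})$-valued cocycle via \Cref{Thm:SolvingDiffTValuedCocycle} together with the property (T) rigidity of circle actions, using effectiveness (\Cref{L:ActionIsEffective}), to show that no factor can be annihilated. This is the only point where genuine work beyond the irreducible case is required; once $K$ is known to be finite, the passage to a torsion-free finite-index subgroup is routine.
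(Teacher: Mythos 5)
Your argument is complete only for irreducible $\Gamma$, and the case you yourself flag as ``the crux'' --- a reducible lattice with $\ker\rho$ containing an entire simple factor --- is not actually handled; it is precisely that case which forces the paper to take a different route altogether. Margulis' normal subgroup theorem does not apply to a reducible lattice, and if $\rho$ killed a factor the lemma would simply be false (no finite-index subgroup meets an infinite factor trivially), so this is not a technicality that can be deferred. Moreover, the tool you point to for closing the gap, \Cref{Thm:SolvingDiffTValuedCocycle}, is a measurable cocycle statement and is not what is needed here. The paper's proof instead runs entirely through the circle leaves: by \Cref{Thm:LatticeEquivariantFibration} every $\gamma\in\ker\rho$ preserves each center leaf $W^{c}(x)$, which is a circle; since $\ker\rho$ is normal it is either finite or a higher rank lattice, and in either case its image in ${\rm Diff}(W^{c}(x))$ is finite by \cite{DeroinHurtado}; the cardinality of that image is locally constant in $x$, hence constant, so a finite-index subgroup $H\leq\ker\rho$ acts trivially on every leaf and hence on all of $X_{\Lambda}$, whence $H=e$ by effectiveness (\Cref{L:ActionIsEffective}) and $\ker\rho$ is finite. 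The step you omit --- passing from ``finite image on each individual leaf'' to ``a single finite-index subgroup acting trivially on all leaves simultaneously'' via local constancy --- is the real content, and no case split on irreducibility is needed once it is in place.

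The remainder of your argument is fine and in one respect cleaner than the paper's: once $\ker\rho$ is known to be finite, invoking Selberg's lemma to pass to a torsion-free finite-index subgroup (so that the finite kernel is killed) is a legitimate alternative to the paper's endgame, which instead restricts $\alpha$ to the single fiber $\Phi^{-1}(0)$ and applies \cite{DeroinHurtado} once more to find a finite-index subgroup acting trivially there. Your observation that the hypotheses of \Cref{MainThm:AbelianCase} and \Cref{MainThm:NonAbelianCase} pass to finite-index subgroups after replacing $\gamma_{0}$ by a power is also correct and worth stating. But as written the proof has a hole exactly where the paper's argument lives.
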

\begin{proof}
Let $\gamma\in\ker\rho$. By Theorem \ref{Thm:LatticeEquivariantFibration} we have $\Phi(\alpha(\gamma)x) = \Phi(x)$ so $\alpha(\gamma)W^{c}(x) = W^{c}(x)$ for every $x\in X_{\Lambda}$. It follows that $\ker\rho$ act on $W^{c}(x)$. Since $\ker\rho$ is normal in $\Gamma$ it is either finite or a semisimple lattice. Either way, since $W^{c}(x)$ is a circle, the image $\alpha(\ker\rho)\subset{\rm Diff}(W^{c}(x)$ is finite \cite{DeroinHurtado}. Note that the cardinality $|\alpha(\ker\rho)\subset{\rm Diff}(W^{c}(x)|$ is locally constant in $x$, so therefore constant. It follows that there is a finite index subgroup $H\leq\ker\rho$ that acts trivially on every $W^{c}(x)$. Since $h\in H$ also satisfy $\Phi(\alpha(h)x) = \Phi(x)$ it follows that $H\subset\ker\alpha$. But the action $\alpha$ is effective, so $H = e$. Since the group $e$ has finite index in $\ker\rho$ it follows that $\ker\rho$ is finite. Every element $\rho(\gamma)$ fix $0$ so we obtain a map $\psi:\Gamma\to{\rm Diff}(\Phi^{-1}(0))$ by restricting $\alpha$. Since $\Gamma$ has finite image in ${\rm Diff}(\Phi^{-1}(0))$ \cite{DeroinHurtado} it follows that $\Gamma' = \ker\psi$ is a finite index subgroup of $\Gamma$. It is clear that $\Gamma'\cap\ker\rho = e$ so, after possibly dropping to the finite index subgroup $\Gamma'$, the lemma follows. 
\end{proof}

%\subsection{Reduction to an algebraic group and arithmetic lattice}
We will reduce the groups $G$ that we consider to be the real points of a simply connected semisimple algebraic group, $G = \mathbf{G}(\mathbb{R})^{\circ}$. We will also show that any action as in Theorem \ref{MainThm:NonAbelianCase} or as in Theorem \ref{MainThm:AbelianCase} contain a subaction $\mathbb{Z}^{k}\cong\Sigma\subset\Gamma$ that is higher rank (in the setting of Theorem \ref{MainThm:AbelianCase} the action is higher rank on the base of the fibration $\mathbb{T}^{d+1}\to\mathbb{T}^{d}$, or equivalently we find a $\mathbb{Z}^{k}\cong\Sigma\subset\Gamma$ such that the action $\alpha|_{\Sigma}$ has precisely one rank$-1$ factor). The conclusion of the following lemma is Hypothesis $8.1$ in \cite{BrownRodriguez-HertzWang2017}.
\begin{lemma}\label{L:RestrictToAlgebraicGroup}
We may assume, in addition to the assumptions in Theorem \ref{MainThm:AbelianCase} or \ref{MainThm:NonAbelianCase}, without loss of generality that there is a simply connected semisimple algebraic group $\mathbf{G}$ defined over $\mathbb{R}$ such that all its $\mathbb{R}-$simple factors have $\mathbb{R}-$rank at least $2$, $G = \mathbf{G}(\mathbb{R})^{\circ}$, and $\Gamma\leq G$ is a lattice.
\end{lemma}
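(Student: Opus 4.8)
The plan is to deduce Lemma \ref{L:RestrictToAlgebraicGroup} from standard structure theory of semisimple groups, without using any dynamics—the dynamical information (the partially hyperbolic element, the representation $\rho$) is already encoded in the action $\alpha$ and the fibration $\Phi$, and the point here is purely to replace the abstract higher rank lattice $\Gamma \leq G$ by a lattice in the real points of a simply connected algebraic group, so that the machinery of \cite{BrownRodriguez-HertzWang2017} (whose Hypothesis $8.1$ we are matching) becomes available. First I would recall that by Notation \ref{nota1}, $G$ is a connected semisimple Lie group with finite center, no compact factors, and every simple factor of $\mathbb{R}$-rank at least $2$. The strategy is to pass to the adjoint group, then lift to the algebraic simply connected cover, and finally transport the lattice $\Gamma$ along the resulting isogenies.

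The key steps, in order, are as follows. First I would consider the adjoint representation $\Ad : G \to \Aut(\lieg) \subset \GL(\lieg)$, whose kernel is the (finite) center $Z(G)$; its image $\bar{G} = G/Z(G)$ is the real points of the adjoint algebraic group $\mathbf{G}_{\mathrm{ad}}$, which is defined over $\mathbb{R}$ because $\lieg$ and its bracket are defined over $\mathbb{R}$. Second, I would take $\mathbf{G}$ to be the simply connected cover of $\mathbf{G}_{\mathrm{ad}}$ in the category of algebraic groups; this $\mathbf{G}$ is a simply connected semisimple $\mathbb{R}$-group, and the $\mathbb{R}$-rank and the ranks of the simple factors are isogeny invariants, so every $\mathbb{R}$-simple factor of $\mathbf{G}$ still has $\mathbb{R}$-rank at least $2$. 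Set $G' = \mathbf{G}(\mathbb{R})^{\circ}$. There is a central isogeny $\mathbf{G} \to \mathbf{G}_{\mathrm{ad}}$ inducing a homomorphism $p : G' \to \bar{G}$ with finite central kernel and finite-index image. Third, I would transport the lattice: the projection $G \to \bar{G}$ sends $\Gamma$ to a lattice $\bar{\Gamma}$ in $\bar{G}$ (its kernel $\Gamma \cap Z(G)$ is finite), and pulling $\bar\Gamma$ back along $p$ produces a lattice in $G'$. After intersecting with the relevant finite-index subgroups to absorb the finite kernels, one obtains a finite-index subgroup $\Gamma'$ of $\Gamma$ that maps isomorphically onto a lattice $\tilde{\Gamma} \leq G'$. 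Since passing to finite-index subgroups is harmless for all of Theorems \ref{MainThm:NonAbelianCase} and \ref{MainThm:AbelianCase} (we have already been dropping to finite-index subgroups in Theorem \ref{Thm:LatticeEquivariantFibration} and Lemma \ref{L:HomotopyRepIsInjective}), and since the action $\alpha$, the homomorphism $\rho$, and the semiconjugacy $\Phi$ all restrict to $\Gamma'$, we may replace $(G, \Gamma)$ by $(G', \tilde{\Gamma})$.

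I expect the main technical obstacle to be the bookkeeping around the central isogenies and the non-surjectivity of $p : G' \to \bar{G}$ on real points—the map $\mathbf{G}(\mathbb{R}) \to \mathbf{G}_{\mathrm{ad}}(\mathbb{R})$ need not be surjective, and one must use the connected component $\mathbf{G}(\mathbb{R})^{\circ}$ and verify that $\bar\Gamma$ (or a finite-index subgroup of it) actually lies in the image, so that a genuine lattice in $G'$ is produced rather than a lattice in a proper subgroup. This is exactly the kind of argument carried out in the setting of superrigidity and arithmeticity (see \cites{Mar91,MorrisArith}), and I would cite the standard fact that any lattice in a connected semisimple $\mathbb{R}$-group lifts, after passing to a finite-index subgroup, to a lattice in the simply connected cover. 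The claim that the resulting $\Gamma'$-action still contains a $\mathbb{Z}^{k} \cong \Sigma$ that is higher rank in the sense of Definition \ref{Def:HigherRankAbelianAction}—the second assertion flagged before the lemma—I would handle separately using the Lyapunov/coarse-exponent structure of $\rho|_{\Sigma}$ developed in Section \ref{SubSec:HigherRankActions}, choosing $\Sigma$ inside a maximal $\mathbb{R}$-split torus of $\mathbf{G}$ so that the higher-rank condition follows from $\mathbb{R}\text{-rank} \geq 2$ of each simple factor; this is where the rank hypothesis is genuinely used.
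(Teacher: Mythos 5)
Your reduction is correct and is, in substance, the argument the paper invokes: the paper's entire proof is a citation to \cite{BrownRodriguez-HertzWang2017}*{Section 8.1}, and the chain you write out (adjoint quotient, algebraic simply connected cover, transport of the lattice along the central isogenies, absorption of the finite kernels into finite-index subgroups) is exactly the content of that reduction. The one genuine difference concerns how the \emph{action} is carried over to the new lattice. The paper flags this as the only non-routine point: since $\alpha$ is not yet known to be conjugate to an action by automorphisms, it uses the injectivity of $\rho$ from Lemma \ref{L:HomotopyRepIsInjective} to lift $\alpha$ as in \cite{BrownRodriguez-HertzWang2017}. You instead restrict $\alpha$ to a finite-index subgroup $\Gamma'\leq\Gamma$ that maps isomorphically onto a lattice in $\mathbf{G}(\mathbb{R})^{\circ}$, which sidesteps $\rho$ entirely; this works, but the ``standard fact'' you lean on (that a lattice in the adjoint group virtually lifts through the central isogeny) deserves a sentence of justification: the preimage of $\bar{\Gamma}$ in $\mathbf{G}(\mathbb{R})^{\circ}$ is a lattice which is finitely generated and linear, hence residually finite by Malcev, so it has a finite-index subgroup meeting the finite central kernel trivially, and that subgroup projects isomorphically onto a finite-index subgroup of $\bar{\Gamma}$. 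Both you and the paper pass freely to finite-index subgroups and defer the (standard but nontrivial) promotion of the conclusion back to all of $\Gamma$; since the paper does the same in Theorem \ref{Thm:LatticeEquivariantFibration} and Lemma \ref{L:HomotopyRepIsInjective}, this is not a defect specific to your write-up. Finally, your closing remark about choosing $\Sigma$ in a maximal $\mathbb{R}$-split torus belongs to Lemma \ref{L:ExistenceOfHigherRankAbelianGroup}, not to this lemma, and be aware that the higher-rank condition there is a condition on the coarse exponents of $\rho|_{\Sigma}$ acting on $\mathbb{Z}^{d}$ (Definition \ref{Def:HigherRankAbelianAction}), so it does not follow from $\mathbb{R}\text{-rank}\geq 2$ of $\mathbf{G}$ alone; the paper obtains it from \cite{BrownRodriguez-HertzWang2017}*{Proposition 8.15}.
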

\begin{proof}
This is outlined in \cite{BrownRodriguez-HertzWang2017}*{Section 8.1}. The argument in Section 8.1 of \cite{BrownRodriguez-HertzWang2017} works in our setting as well, the only aspect that has to be changed is that we can not use that our action $\alpha:\Gamma\to{\rm Diff}^{\infty}(X_{\Lambda})$ is conjugated to an action by automorphisms. However, we can use that the map $\rho:\Gamma\to{\rm GL}(d,\mathbb{Z})$ is injective (from Lemma \ref{L:HomotopyRepIsInjective}) which implies that we can lift the action $\alpha$ as in \cite{BrownRodriguez-HertzWang2017}.
\end{proof}
\begin{lemma}\label{L:ExistenceOfHigherRankAbelianGroup}
If $S\leq\Gamma$ is a Zariski dense semigroup such that every $\gamma\in S$ satisfy that $\rho(\gamma)\in{\rm GL}(d,\mathbb{Z})$ is hyperbolic, then there is $\gamma\in S$ and a finite index subgroup $\Sigma\subset Z_{\Gamma}(\gamma)$ such that $\rho|_{\Sigma}:\Sigma\to{\rm GL}(d,\mathbb{Z})$ is higher rank. In particular, there is a subgroup $\mathbb{Z}^{k}\cong\Sigma\leq\Gamma$ such that $\rho|_{\Sigma}$ is higher rank.
\end{lemma}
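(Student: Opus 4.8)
The plan is to move all the Lyapunov/coarse-exponent data of $\rho$ onto an algebraic representation via superrigidity, to build $\Sigma$ out of the centralizer of a generic regular element, and then to read off the higher rank condition from the Galois action on eigenspaces.

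First I would apply Margulis superrigidity. Since $\Gamma$ is a higher rank lattice in $G=\mathbf{G}(\mathbb{R})^{\circ}$ (as arranged in \Cref{L:RestrictToAlgebraicGroup}) and $\rho\colon\Gamma\to\GL(d,\mathbb{Z})$ has infinite, injective image by \Cref{L:HomotopyRepIsInjective}, after passing to a finite index subgroup $\rho$ coincides with the restriction of a rational representation $\widetilde{\rho}\colon\mathbf{G}\to\GL(d)$ up to a bounded correction valued in a compact group commuting with the image. The bounded factor produces no exponential growth, so for any abelian $\Sigma\leq\Gamma$ the Lyapunov functionals of $\rho|_{\Sigma}$, the set $\Delta_{\Sigma}$ of coarse classes, and the coarse subspaces $E_{\Sigma}^{[\chi]}$ are exactly the restricted weights and weight spaces of $\widetilde{\rho}$ evaluated on the image of $\Sigma$ in a maximal torus. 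As $\widetilde{\rho}(\Gamma)\subset\GL(d,\mathbb{Z})$ preserves a $\mathbb{Q}$-structure, I decompose $\mathbb{R}^{d}$ into $\mathbb{Q}$-irreducible $\Gamma$-invariant subspaces (these are $\mathbf{G}$-invariant by Zariski density) and keep track of the $\Gal(\overline{\mathbb{Q}}/\mathbb{Q})$-action on the eigenvalues of $\widetilde{\rho}(\gamma)$ inside each block. Note that a semisimple group has no nontrivial characters, so every such block has dimension at least two.

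Next I would produce $\gamma$ and $\Sigma$. Because $S$ is Zariski dense, the theorem of Prasad and Rapinchuk on $\mathbb{R}$-regular elements in Zariski dense subgroups yields $\gamma\in S$ that is $\mathbb{R}$-regular and whose characteristic polynomial under $\widetilde{\rho}$ is irreducible over $\mathbb{Q}$ on each $\mathbb{Q}$-irreducible block; equivalently, $\gamma$ generates a maximal torus on which $\Gal(\overline{\mathbb{Q}}/\mathbb{Q})$ permutes the eigenvalues of each block transitively. By hypothesis $\rho(\gamma)$ is hyperbolic, so $\widetilde{\rho}(\gamma)$ has no eigenvalue of modulus $1$ and no coarse exponent vanishes. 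For such an $\mathbb{R}$-regular $\gamma$, the theorem of Prasad and Raghunathan shows that $Z_{\Gamma}(\gamma)$ contains a finite index free abelian subgroup $\Sigma\cong\mathbb{Z}^{k}$, with $k$ the $\mathbb{R}$-rank of $G$ (so $k\geq 2$), projecting to a cocompact lattice in a maximal $\mathbb{R}$-split torus $\mathbf{A}$ containing the split part of $\gamma$. Thus $\Sigma$ is Zariski dense in $\mathbf{A}$ and $\Delta_{\Sigma}$ is the set of all coarse restricted weights of $\widetilde{\rho}$ relative to $\mathbf{A}$; since $\gamma$ is regular, each single weight space of $\mathbf{A}$ is a $\gamma$-eigenspace.

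The heart of the proof is verifying that $\rho|_{\Sigma}$ is higher rank, namely that for every $[\chi]\in\Delta_{\Sigma}$ the space $V=\bigoplus_{[\lambda]\neq\pm[\chi]}E_{\Sigma}^{[\lambda]}$ defines a minimal translation action on $\mathbb{T}^{d}$, i.e.\ lies in no proper rational subspace. The smallest rational subspace containing a $\gamma$-eigenspace inside a $\mathbb{Q}$-irreducible block is, by irreducibility of the characteristic polynomial, the whole Galois orbit, hence the whole block. It therefore suffices to show that $V$ meets every block in a nonzero eigenspace. For this I use rank $\geq 2$: on each block the $\mathbb{R}$-span of the weights is a nonzero Weyl-invariant subspace of the reflection representation of the relevant factor, which is irreducible of dimension $\geq 2$, so the weights cannot all lie on one line; thus for any ray $[\chi]$ there is a weight $\mu$ of the block with $[\mu]\neq\pm[\chi]$, giving $V\cap(\text{block})\neq 0$. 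Consequently the smallest rational subspace containing $V$ is all of $\mathbb{R}^{d}$, so $V$ is totally irrational and $\rho|_{\Sigma}$ is higher rank. Taking this $\Sigma$ proves the ``in particular'' statement.

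The hard part is exactly this final step, and it is where both standing hypotheses are indispensable. The condition that every simple factor has $\mathbb{R}$-rank at least $2$ guarantees enough coarse exponents so that deleting one coarse line $\pm[\chi]$ still leaves a nonzero eigenspace in each block: for a $\mathbb{Q}$-split diagonal torus in the standard representation of $\SL(3,\mathbb{Z})$ the naive choice fails, the coordinate hyperplanes being rational, and it is precisely the irreducibility of the characteristic polynomial of $\gamma$ (supplied by Prasad--Rapinchuk from the Zariski density of $S$) that makes the eigenspaces irrational and forces each Galois orbit to span its block. The two technical points to get right are: (a) selecting a single $\gamma\in S$ that is simultaneously $\mathbb{R}$-regular, hyperbolic under $\rho$, and $\mathbb{Q}$-irreducible on every block; and (b) correctly matching the $\mathbb{R}$-split torus $\mathbf{A}$ against the $\mathbb{Q}$-structure so that ``Galois orbit of an eigenspace'' and ``coarse weight line'' line up as used above.
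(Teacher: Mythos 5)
The paper does not prove this lemma from scratch: it verifies \cite{BrownRodriguez-HertzWang2017}*{Hypothesis 8.14} via Lemma \ref{L:RestrictToAlgebraicGroup} and then quotes \cite{BrownRodriguez-HertzWang2017}*{Proposition 8.15}. Your proposal reconstructs the argument behind that citation, and its skeleton — superrigidity to replace $\rho$ by a rational representation up to compact error, Prasad--Rapinchuk to extract a generic $\mathbb{R}$-regular $\gamma\in S$, Prasad--Raghunathan to get $\Sigma\cong\mathbb{Z}^{k}$ of finite index in $Z_{\Gamma}(\gamma)$, and the rank $\geq2$ hypothesis to control the weights — is the right one.

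There is, however, a genuine gap in the last step. You reduce the higher rank condition to the claim that the smallest rational subspace containing a single $\gamma$-eigenspace of a $\mathbb{Q}$-irreducible block is the whole block, justified by choosing $\gamma$ so that the characteristic polynomial of $\widetilde{\rho}(\gamma)$ is irreducible over $\mathbb{Q}$ on each block. That choice is in general impossible, and Prasad--Rapinchuk does not supply it: their ``irreducibility'' concerns the torus $\mathbf{T}=Z_{\mathbf{G}}(\gamma)^{\circ}$ (the Galois group acts on $X^{*}(\mathbf{T})$ through the full Weyl group), not the characteristic polynomial in an arbitrary representation. Concretely, for $\mathbf{G}=\SL_{3}$, $\Gamma=\SL(3,\mathbb{Z})$ and $\widetilde{\rho}=\mathrm{Sym}^{2}$ of the standard representation, the block $\mathrm{Sym}^{2}(\mathbb{Q}^{3})$ is $\mathbb{Q}$-irreducible, yet for every $\gamma$ the eigenvalues split into the two Galois-stable families $\{\lambda_{i}^{2}\}$ and $\{\lambda_{i}\lambda_{j}\}$, so the characteristic polynomial always factors over $\mathbb{Q}$ and the rational hull of one eigenspace is a proper $3$-dimensional subspace. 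Hence your reduction ``it suffices that $V$ meets every block'' is not sufficient. The correct reduction is that $V$ must meet every \emph{Galois orbit} of weight spaces; for Prasad--Rapinchuk-generic $\gamma$ these orbits are precisely the Weyl orbits, and your own rank $\geq2$ argument (the Weyl orbit of a nonzero restricted weight spans a nonzero Weyl-invariant subspace, which has dimension at least $2$, so the orbit cannot lie on the single line $\pm[\chi]$) applies verbatim orbit by orbit instead of block by block. With that replacement the proof closes; note also that the ``in particular'' clause additionally requires the existence of some Zariski dense semigroup with $\rho(\gamma)$ hyperbolic for all its elements, which the paper obtains from the cone construction of Section \ref{Sec:SmoothConjugacyAccessible}.
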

\begin{proof}
By using Lemma \ref{L:RestrictToAlgebraicGroup} (corresponding to Hypothesis $8.1$ in \cite{BrownRodriguez-HertzWang2017}) the first paragraph of the proof of \cite{BrownRodriguez-HertzWang2017}*{Proposition 8.3} implies that we may assume that \cite{BrownRodriguez-HertzWang2017}*{Hypothesis 8.14} holds (with $\rho$ in \cite{BrownRodriguez-HertzWang2017}*{Hypothesis 8.14} corresponding to our map $\rho:\Gamma\to{\rm GL}(d,\mathbb{Z})$). The first part now follows from \cite{BrownRodriguez-HertzWang2017}*{Proposition 8.15}. The last part of the lemma follows since there always exists a Zariski dense semigroup containing only hyperbolic automorphisms (we can use the construction in \cite{BrownRodriguez-HertzWang2017}*{Section 8.2}, see also Section \ref{Sec:SmoothConjugacyAccessible}).
\end{proof}

\section{Topological rigidity}\label{sec:topconj}

Let $\Gamma$, $X_{\Lambda}$, and $\alpha$ be as in Section \ref{SubSec:InitialReductions}. We will assume in the remainder of this section that we have dropped to a finite index subgroup such that Theorem \ref{Thm:LatticeEquivariantFibration} applies, so $\Phi(\alpha(\gamma)x) = \rho(\gamma)\Phi(x)$. Let $\gamma_{0}\in\Gamma$ be the element such that $f = \alpha(\gamma_{0})$ is partially hyperbolic. In this section we start the proofs of Theorems \ref{MainThm:AbelianCase} and \ref{MainThm:NonAbelianCase} by proving that $\alpha$ is topologically conjugated to an affine action. In Section \ref{SubSec:CenterTranslation} we prove that $\alpha$ commute with an action $\eta_{c}:\mathbb{T}\times X_{\Lambda}\to X_{\Lambda}$ which acts transitively and freely on each fiber of $\Phi$ (or equivalently on each center leaf of $f$). This implies, in particular, that $\alpha$ preserves a metric along the center. After we start the construction of a topological conjugacy to an affine action. In Section \ref{SubSec:ConjugacyForAbelianSubgroup} we produce a topological conjugacy for an abelian subgroup $\mathbb{Z}^{k}\cong\Sigma\leq\Gamma$ such that $\rho|_{\Sigma}$ is of higher rank. Once we have a conjugacy for $\Sigma$ we extend it (in Section \ref{SubSec:ExtensionOfTopologicalConjugacy}) to a conjugacy for all of $\Gamma$.

\subsection{Existence of center translation}\label{SubSec:CenterTranslation}

Recall that the map $f:X_{\Lambda}\to X_{\Lambda}$ can be considered as a cocycle over $\rho(\gamma_{0}) =: L_{su}:\mathbb{T}^{d}\to\mathbb{T}^{d}$ using the map $\Phi:X_{\Lambda}\to\mathbb{T}^{d}$ (in the sense of \cite{AvilaViana2010}*{Section 2.1}). Since $L_{su}$ is hyperbolic we can apply the invariance principle by Avila-Viana \cite{AvilaViana2010}*{Theorem D}.
\begin{lemma}\label{lem:cocycleproblem}
There is a $\Gamma-$invariant measure $\mu$ such that $\Phi_{*}\mu = {\rm vol}_{\mathbb{T}^{d}}$, $\mu$ has conditionals $\mu_{x}^{c}$ equivalent to Lebesgue, and the conditional measures $\mu_{x}^{c}$ vary continuously in $x$.
\end{lemma}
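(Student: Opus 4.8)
The plan is to first produce a $\Gamma$-invariant measure with absolutely continuous fiber conditionals by viewing $\alpha$ as a skew product and invoking the Witte--Morris--Zimmer cocycle theorem, and then to upgrade the (a priori only measurable) disintegration to a continuous one by means of the Avila--Viana invariance principle.

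First I would set up the cocycle. By Theorem \ref{Thm:LatticeEquivariantFibration} the fibration $\Phi$ intertwines $\alpha$ with the action of $\rho(\Gamma)\le{\rm GL}(d,\mathbb{Z})$ on $(\mathbb{T}^{d},{\rm vol})$, and by Theorem \ref{Thm:PropertiesOfFibration} the fibers $\Phi^{-1}(y)=W^{c}(x)$ are smooth, compact, oriented circles whose tangent line is $E^{c}$. Choosing measurably an arc-length parametrization $\mathbb{T}\to W^{c}(x)$ of each center circle yields a measurable bundle isomorphism $X_{\Lambda}\cong\mathbb{T}^{d}\times\mathbb{T}$ under which $\alpha$ becomes a skew product over $\rho$ driven by a measurable cocycle $\beta\colon\Gamma\times\mathbb{T}^{d}\to{\rm Diff}^{1}(\mathbb{T})$; the fiber maps are smooth circle diffeomorphisms because each $\alpha(\gamma)$ is smooth and preserves the smooth center foliation. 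Since the base action on $(\mathbb{T}^{d},{\rm vol})$ by toral automorphisms is induced irreducible by \cite{HL:DCSR}*{Theorem 5.6}, Theorem \ref{Thm:SolvingDiffTValuedCocycle} applies and shows that ${\rm vol}\otimes\lambda$ is $\Gamma$-invariant on the skew product. Transporting this measure back through the fiberwise-smooth trivialization produces a $\Gamma$-invariant $\mu$ on $X_{\Lambda}$ with $\Phi_{*}\mu={\rm vol}$ whose conditionals $\mu_{x}^{c}$ are equivalent to Lebesgue (arc-length) on each center leaf. This already yields the first two assertions; it remains to prove continuity of $x\mapsto\mu_{x}^{c}$.

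To invoke the invariance principle I must check that the center Lyapunov exponent of $\mu$ for $f=\alpha(\gamma_{0})$ vanishes. The key observation is that $f$ permutes the center circles according to $L_{su}$, so for every $y$ one has $\int_{W^{c}(y)}\norm{Df^{n}|_{E^{c}}}\,d(\text{arclength})={\rm length}\big(W^{c}(L_{su}^{n}y)\big)$, and the latter is bounded above and below uniformly in $n$ and $y$ by compactness of $X_{\Lambda}$ and continuity of $W^{c}$. A Chebyshev estimate on each fiber together with Borel--Cantelli then gives $\limsup_{n}\tfrac1n\log\norm{Df^{n}|_{E^{c}}}\le0$ almost everywhere with respect to arc-length, hence $\mu$-almost everywhere since $\mu_{x}^{c}$ is equivalent to arc-length; running the same argument for $f^{-1}$ (the leaves are also $f^{-1}$-invariant circles) gives the reverse inequality, so the center exponent is exactly $0$.

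Finally, with $L_{su}$ hyperbolic and the center exponent vanishing, the Avila--Viana invariance principle \cite{AvilaViana2010}*{Theorem D} shows that the disintegration $\{\mu_{x}^{c}\}$ agrees almost everywhere with a family that is invariant under the stable and unstable holonomies of $f$ and depends continuously on $x$; choosing this continuous version gives the last assertion. The main obstacle is the middle step, namely verifying the hypothesis of the invariance principle: the vanishing of the center exponent is exactly the point where both the absolute continuity of the conditionals (supplied by Witte--Morris--Zimmer) and the compactness of the one-dimensional center leaves are needed, and some care is required to ensure that the measurable trivialization can be taken fiberwise smooth so that the Lebesgue conditionals of ${\rm vol}\otimes\lambda$ correspond to measures genuinely equivalent to arc-length on the leaves.
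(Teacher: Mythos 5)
Your proposal follows the paper's proof essentially verbatim: measurably trivialize $\Phi$ to view $\alpha$ as a ${\rm Diff}^{1}(\mathbb{T})$-valued cocycle over $\rho$, apply Theorem \ref{Thm:SolvingDiffTValuedCocycle} (using induced irreducibility of the toral automorphism action on $(\mathbb{T}^{d},{\rm vol})$) to obtain a $\Gamma$-invariant measure with Lebesgue-equivalent center conditionals, and then feed the vanishing center exponent into the Avila--Viana invariance principle to get holonomy invariance and continuity of the disintegration. The only difference is that you supply an explicit leaf-length/Borel--Cantelli argument for the vanishing of the center exponent, a point the paper simply asserts as a consequence of the conditionals being Lebesgue; your argument for that step is correct.
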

\begin{proof}
By (measurably) trivializing the fiber bundle $\Phi:X_{\Lambda}\to\mathbb{T}^{d}$ we can write $\alpha$ as a ${\rm Diff}^{1}(\mathbb{T})-$valued cocycle $\beta$ over $\rho$. Note that the action $\rho$ is given by toral automorphisms. By Theorem \ref{Thm:SolvingDiffTValuedCocycle} and the discussion below, the cocycle is cohomologous by $h:\mathbb{T}^{d}\to{\rm Diff}(\mathbb{T})$ to a cocycle taking values in ${\rm Iso}(\mathbb{T})$. Let $\mu_{y}^{c} = h(y)_{*}^{-1}{\rm vol}_{\mathbb{T}}$, $y\in\mathbb{T}^{d}$, then
\begin{align}
\mu = \int_{\mathbb{T}^{d}}\mu_{y}^{c}\intd{\rm vol}_{\mathbb{T}^{d}}(y)
\end{align}
is $\alpha-$invariant and has conditionals equivalent to Lebesgue. Let $f = \alpha(\gamma_{0})$ be the partially hyperbolic element. The measure $\mu$ is $f-$invariant, has zero center Lyapunov exponent (since the conditionals along the center are Lebesgue), and projects to ${\rm vol}_{\mathbb{T}^{d}}$. It follows by the invariance principle \cite{AvilaViana2010}*{Theorem D} that the disintegration $\mu_{x}^{c}$ along $\Phi$ are $su-$holonomy invariant, and, in particular, vary continuously in $x$.
\end{proof}
\begin{lemma}\label{L:CenterTranslationAction}
There is a continuous $\mathbb{T}-$action $\eta_{c}:\mathbb{T}\times X_{\Lambda}\to X_{\Lambda}$ that commutes with $\alpha$ and act transitively and freely on each center leaf $W^{c}(x)$.
\end{lemma}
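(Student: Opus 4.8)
The plan is to build $\eta_c$ directly from the continuous family of leafwise conditional measures $\mu_x^c$ furnished by Lemma \ref{lem:cocycleproblem}, by declaring $\eta_c(t,\cdot)$ to be ``rotation by $\mu^c$-measure $t$'' along each oriented center circle. Concretely, recall from Theorem \ref{Thm:PropertiesOfFibration} that every center leaf $W^c(x) = \Phi^{-1}(\Phi(x))$ is a compact oriented circle, and that $\mu_x^c$ is a probability measure on $W^c(x)$ equivalent to Lebesgue, hence fully supported and atomless. For $y\in W^c(x)$ and $t\in\mathbb{T} = \mathbb{R}/\mathbb{Z}$, define $\eta_c(t,y)$ to be the unique point $y'\in W^c(x)$ such that the positively oriented arc from $y$ to $y'$ has $\mu_x^c$-measure $t$. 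Since $\mu_x^c$ is atomless and fully supported this point exists and is unique, and the definition is manifestly independent of any choice of basepoint on the leaf.

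Second, I would check that $\eta_c$ is a free and transitive continuous $\mathbb{T}$-action on each leaf. The group law $\eta_c(s,\cdot)\circ\eta_c(t,\cdot) = \eta_c(s+t,\cdot)$ is immediate, since $\eta_c(t,\cdot)$ preserves $\mu_x^c$ and oriented arcs concatenate additively in measure; freeness and transitivity on $W^c(x)$ follow because $t\mapsto\eta_c(t,y)$ is exactly the inverse of the ($\mu_x^c$-)cumulative distribution homeomorphism $W^c(x)\to\mathbb{T}$. Continuity in $t$ is clear from atomlessness. The content is continuity in $y$: locally trivializing the center fiber bundle $\Phi$ of Theorem \ref{Thm:PropertiesOfFibration}, the leaves and their orientations vary continuously, and by Lemma \ref{lem:cocycleproblem} the measures $\mu_x^c$ vary continuously; since all $\mu_x^c$ are atomless and fully supported, the associated cumulative distribution functions and their inverses vary continuously as well, yielding joint continuity of $\eta_c$.

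Third, I would establish commutation with $\alpha$. By Theorem \ref{Thm:LatticeEquivariantFibration} each $\alpha(\gamma)$ sends $W^c(y)$ to $W^c(\alpha(\gamma)y)$, and by the $\alpha$-invariance of $\mu$ in Lemma \ref{lem:cocycleproblem} we have $\alpha(\gamma)_*\mu_y^c = \mu_{\alpha(\gamma)y}^c$. After passing to the finite-index subgroup on which every $\alpha(\gamma)$ (and $f$) preserves the orientation of $E^c$ — the orientation character $\Gamma\to\{\pm1\}$ has kernel of index at most two, which is harmless since we are already working up to finite index — each restriction $\alpha(\gamma)|_{W^c(y)}$ is an orientation- and measure-preserving correspondence, hence carries the oriented arc of $\mu_y^c$-measure $t$ from $y$ to $\eta_c(t,y)$ onto the oriented arc of $\mu_{\alpha(\gamma)y}^c$-measure $t$ from $\alpha(\gamma)y$ to $\alpha(\gamma)\eta_c(t,y)$. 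This says precisely $\alpha(\gamma)\eta_c(t,y) = \eta_c(t,\alpha(\gamma)y)$, i.e.\ $\eta_c$ commutes with $\alpha$.

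The main obstacle I expect is the continuity of $\eta_c$ in the leaf variable $y$: this rests entirely on the continuity of the conditional measures $\mu_x^c$ (the output of the Avila--Viana invariance principle in Lemma \ref{lem:cocycleproblem}) together with the continuity of the center fiber bundle, and it is here that the Hölder nature of $\Phi$ must be handled with care. The orientation issue is a secondary point, cleanly dispatched by the finite-index reduction; note that it is genuinely necessary, since an orientation-reversing $\alpha(\gamma)$ would conjugate $\eta_c(t,\cdot)$ to $\eta_c(-t,\cdot)$ rather than commute with it.
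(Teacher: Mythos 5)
Your construction is essentially the paper's: the paper defines $\eta_{c}(s)$ on each center circle as the unique orientation-preserving, $\mu_{y}^{c}$-preserving homeomorphism with rotation number $s$, which is exactly your ``advance by $\mu^{c}$-measure $s$ along the oriented leaf'' map, and continuity is likewise deduced from the continuous variation of the conditionals from Lemma \ref{lem:cocycleproblem}. Your write-up is in fact more careful than the paper's two-line proof, which leaves the commutation with $\alpha$ --- and in particular the orientation-preservation point you correctly flag and dispatch by a finite-index reduction --- entirely implicit.
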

\begin{proof}
Let $\mu_{y}^{c}$, $y\in\mathbb{T}^{d}$, be as in the previous lemma. There is a unique orientation preserving, $\mu_{y}^{c}-$preserving homeomorphism $h_{s}:W^{c}(x)\to W^{c}(x)$, $\Phi(x) = y$, such that the rotation number of $h_{s}$ is $s$. We define $\eta_{c}(s)x = h_{s}(x)$. It is immediate that $\eta_{c}$ defines a $\mathbb{T}-$action, that it is continuous follows since the disintegration $\mu_{y}^{c}$ vary continuously.
\end{proof}
\begin{lemma}\label{L:InvariantCenterMetric}
There is a metric $\intd_{c}$ on $W^{c}$ that is $\eta_{c}$ and $\alpha-$inariant.
\end{lemma}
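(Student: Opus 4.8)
The plan is to manufacture the leafwise metric $\intd_{c}$ by transporting the rotation-invariant distance of the circle through the free transitive action $\eta_{c}$ produced in \Cref{L:CenterTranslationAction}, and then to read off both invariances directly from the defining properties of $\eta_{c}$. Concretely, for each $x\in X_{\Lambda}$ let $\phi_{x}\colon\mathbb{T}\to W^{c}(x)$ be the orbit map $\phi_{x}(s)=\eta_{c}(s)x$. Because $\eta_{c}$ acts freely and transitively on the circle $W^{c}(x)$, the map $\phi_{x}$ is a continuous bijection from a compact space onto a Hausdorff space, hence a homeomorphism. I would then define $\intd_{c}$ on $W^{c}(x)$ as the push-forward $(\phi_{x})_{*}\intd_{\mathbb{T}}$ of the standard rotation-invariant distance $\intd_{\mathbb{T}}$ on $\mathbb{T}=\mathbb{R}/\mathbb{Z}$; explicitly, $\intd_{c}(\eta_{c}(s)x,\eta_{c}(t)x)=\intd_{\mathbb{T}}(s,t)$.

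First I would verify that this is well defined, i.e.\ independent of the basepoint chosen on the leaf. If $x'=\eta_{c}(r)x$ lies on $W^{c}(x)$, then $\phi_{x'}(s)=\eta_{c}(s+r)x=\phi_{x}(s+r)$, so $\phi_{x'}=\phi_{x}\circ R_{r}$, where $R_{r}$ denotes rotation by $r$ on $\mathbb{T}$. Since $\intd_{\mathbb{T}}$ is rotation invariant, $(\phi_{x'})_{*}\intd_{\mathbb{T}}=(\phi_{x})_{*}\intd_{\mathbb{T}}$, so the metric assigned to the leaf does not depend on the chosen basepoint. The $\eta_{c}$-invariance is then immediate: under $\phi_{x}$ the map $\eta_{c}(r)$ corresponds to $R_{r}$, which is an isometry of $(\mathbb{T},\intd_{\mathbb{T}})$, hence $\eta_{c}(r)$ is an $\intd_{c}$-isometry.

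For the $\alpha$-invariance I would use that $\alpha(\gamma)$ commutes with $\eta_{c}$ and maps center leaves to center leaves (by \Cref{L:CenterTranslationAction} together with \Cref{Thm:LatticeEquivariantFibration}). These two facts give $\alpha(\gamma)\phi_{x}(s)=\alpha(\gamma)\eta_{c}(s)x=\eta_{c}(s)\alpha(\gamma)x=\phi_{\alpha(\gamma)x}(s)$, that is, $\alpha(\gamma)\circ\phi_{x}=\phi_{\alpha(\gamma)x}$. Consequently, read in the coordinates $\phi_{x}$ and $\phi_{\alpha(\gamma)x}$, the restriction $\alpha(\gamma)\colon W^{c}(x)\to W^{c}(\alpha(\gamma)x)$ is literally the identity of $\mathbb{T}$, and hence an isometry for $\intd_{c}$; continuity of $x\mapsto\intd_{c}$ in the transverse direction follows from the continuity of $\eta_{c}$, which makes $x\mapsto\phi_{x}$ continuous. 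There is essentially no real obstacle here once one passes to the $\eta_{c}$-coordinates: the commutation relation $\alpha(\gamma)\eta_{c}(s)=\eta_{c}(s)\alpha(\gamma)$ trivializes $\alpha(\gamma)$ between the $\mathbb{T}$-parametrizations of source and target leaves, so both invariances are automatic. The only points needing (routine) care are the basepoint-independence and the continuous transverse dependence, which reduce respectively to rotation invariance of $\intd_{\mathbb{T}}$ and to continuity of $\eta_{c}$.
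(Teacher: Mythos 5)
Your proof is correct, but it runs along a different track than the paper's. The paper builds $\intd_{c}$ directly out of the disintegration: since the conditionals $\mu_{x}^{c}$ from \Cref{lem:cocycleproblem} are equivalent to Lebesgue, their densities define a Riemannian metric $\omega^{c}$ on $E^{c}$, and both invariances are read off from the fact that $\eta_{c}$ and $\alpha$ preserve $\mu$ (hence the conditionals). You instead never touch the measure: you transport the rotation-invariant distance of $\mathbb{T}$ through the orbit maps of the free transitive action $\eta_{c}$ and deduce both invariances formally from rotation-invariance of $\intd_{\mathbb{T}}$ and the commutation relation $\alpha(\gamma)\eta_{c}(s)=\eta_{c}(s)\alpha(\gamma)$. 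The two constructions produce essentially the same metric --- your $\phi_{x}$ is precisely the coordinate uniformizing $\mu_{\Phi(x)}^{c}$, so $\intd_{c}(p,q)$ is the (normalized) conditional measure of the arc from $p$ to $q$ --- but your derivation is cleaner in one respect: it avoids the (elidable but real) point that $\alpha$ preserves the disintegration itself, not just $\mu$. What the paper's formulation buys, and what you should keep in mind, is the remark used immediately afterwards in Section \ref{SubSec:ConjugacyForAbelianSubgroup}: that $\intd_{c}$ is induced by a continuous Riemannian metric and hence is uniformly comparable to the ambient distance at small scales. With your definition this still holds, but it requires the additional observation that the densities of $\mu_{x}^{c}$ (equivalently, the derivatives of the orbit maps $\phi_{x}$ in an arc-length parametrization) are bounded away from $0$ and $\infty$ uniformly in $x$, which follows from continuity of $x\mapsto\mu_{x}^{c}$ and compactness of $X_{\Lambda}$; it is not automatic from continuity of $\eta_{c}$ alone, so you would want to record it explicitly if you adopt this route.
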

\begin{proof}
The conditionals of $\mu$, $\mu_{x}^{c}$, are equivalent to Lebesgue and therefore defines a metric $\omega^{c}$ on the bundle $E^{c}$.Let $\intd_{c}$ be the metric along $W^{c}$ defined by $\omega^{c}$. That $\eta_{c}$ and $\alpha$ both preserve $\intd_{c}$ follows since both actions preserve $\mu$.
\end{proof}

\subsection{Topological rigidity for a Cartan subaction}\label{SubSec:ConjugacyForAbelianSubgroup}

In the remainder of this section, let $\eta_{c}:\mathbb{T}\times X_{\Lambda}\to X_{\Lambda}$ be the action from Lemma \ref{L:CenterTranslationAction}. We will denote by $\intd_{c}$ the metric from Lemma \ref{L:InvariantCenterMetric}, note that this metric is induced by a Riemannian metric so is comparable to the standard metric for small distances. Moreover, $\intd_{c}$ is $\alpha-$invariant by Lemma \ref{L:InvariantCenterMetric}.
Let $\mathbb{Z}^{k}\cong\Sigma\subset\Gamma$ be an abelian subgroup such that ${\rm Im}(\rho|_{\Sigma})\subset{\rm GL}(d,\mathbb{Z})$ is higher rank and contains a hyperbolic matrix. Such a subgroup always exists by Lemma \ref{L:ExistenceOfHigherRankAbelianGroup}.

We will denote by $\Delta_{\Sigma}$ the coarse exponents of $\rho|_{\Sigma}$. Given $[\chi]\in\Delta_{\Sigma}$ let $E_{\Sigma}^{[\chi]}\subset\mathbb{R}^{d}$ be the corresponding coarse subspace.

The following lemma is immediate since $\Phi:X_{\Lambda}\to\mathbb{T}^{d}$ is bi-Hölder along $W^{s}$ and $W^{u}$ \cite{Sandfeldt2024}*{Lemma 3.3} (where $W^{s}$ and $W^{u}$ are the stable and unstable foliations for the partially hyperbolic element $f$).
\begin{lemma}\label{L:LiftOfClosePointsFromBase}
There is $\varepsilon_{0} > 0$, $\beta\in(0,1)$, and $C\geq1$ such that for any $x\in X_{\Lambda}$ and $y\in B_{\varepsilon_{0}}(x)$ there is $p\in W^{c}(y)$ satisfying $\intd(x,p)\leq C\intd(\Phi(x),\Phi(y))^{\beta}$.
\end{lemma}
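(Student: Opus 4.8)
The plan is to connect $x$ to the fibre $\Phi^{-1}(\Phi(y)) = W^{c}(y)$ by a short stable leg followed by a short unstable leg, carrying out the bookkeeping downstairs in $\mathbb{T}^{d}$ — where the hyperbolic automorphism $L_{su}$ has a linear, hence bi-Lipschitz, local product structure — and then lifting each leg through $\Phi$ using the bi-Hölder behaviour of $\Phi$ along $W^{s}$ and $W^{u}$ recorded in \cite{Sandfeldt2024}*{Lemma 3.3}.

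First I would fix the downstairs bracket. Set $\bar{x} = \Phi(x)$ and $\bar{y} = \Phi(y)$; by continuity of $\Phi$ we may shrink $\varepsilon_{0}$ so that $\bar{x}$ and $\bar{y}$ are as close as needed. Since $L_{su}\in{\rm GL}(d,\mathbb{Z})$ is hyperbolic, $\mathbb{R}^{d} = E^{s}_{L}\oplus E^{u}_{L}$ is a linear invariant splitting and the stable and unstable leaves of $L_{su}$ are affine subtori, so there is a unique $b\in W^{s}_{L}(\bar{x})\cap W^{u}_{L}(\bar{y})$ with $\intd(\bar{x},b),\ \intd(b,\bar{y})\leq C_{0}\,\intd(\bar{x},\bar{y})$ for a constant $C_{0}$ depending only on the splitting.

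Next I would lift. By \Cref{Thm:PropertiesOfFibration} the map $\Phi$ collapses center leaves and satisfies $\Phi\circ f = L_{su}\circ\Phi$, so it carries each leaf $W^{s}(x)$ onto $W^{s}_{L}(\bar{x})$ and each leaf $W^{u}(x')$ onto $W^{u}_{L}(\Phi(x'))$; by \cite{Sandfeldt2024}*{Lemma 3.3} these restrictions are bi-Hölder homeomorphisms with uniform constants, with inverse Hölder of some exponent $\beta\in(0,1)$. Hence there is a unique $x'\in W^{s}(x)$ with $\Phi(x') = b$, satisfying $\intd(x,x')\leq\intd_{s}(x,x')\leq C_{1}\,\intd(\bar{x},b)^{\beta}$, and a unique $p\in W^{u}(x')$ with $\Phi(p) = \bar{y}$ — this exists because $\bar{y}\in W^{u}_{L}(b) = W^{u}_{L}(\Phi(x'))$ — satisfying $\intd(x',p)\leq\intd_{u}(x',p)\leq C_{1}\,\intd(b,\bar{y})^{\beta}$. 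Combining with the downstairs estimate, the triangle inequality, and $\beta<1$ gives $\intd(x,p)\leq C\,\intd(\bar{x},\bar{y})^{\beta}$ for a suitable $C\geq1$. Finally $\Phi(p) = \bar{y} = \Phi(y)$ forces $p\in\Phi^{-1}(\Phi(y)) = W^{c}(y)$, the desired point.

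The only points needing care are the uniformity of the bi-Hölder constants across the noncompact leaves $W^{s}$ and $W^{u}$, guaranteed by the uniform form of \cite{Sandfeldt2024}*{Lemma 3.3}, and the verification — from the intertwining relation and the identification $\Phi^{-1}(\Phi(x)) = W^{c}(x)$ — that $\Phi$ sends stable (resp.\ unstable) leaves of $f$ exactly onto stable (resp.\ unstable) leaves of $L_{su}$; both are contained in \Cref{Thm:PropertiesOfFibration}. I expect no genuine obstacle beyond assembling these cited facts, which is why the statement is labelled immediate.
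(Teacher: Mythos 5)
Your proof is correct and is essentially the argument the paper has in mind: the paper declares the lemma ``immediate'' from the bi-H\"older property of $\Phi$ along $W^{s}$ and $W^{u}$ (\cite{Sandfeldt2024}*{Lemma 3.3}), and your write-up simply makes that explicit by bracketing $\Phi(x)$ and $\Phi(y)$ through the linear local product structure of $L_{su}$ downstairs and lifting the two legs with the H\"older inverse bounds. No gap; the uniformity and surjectivity points you flag are exactly the content of the cited lemma together with the global product structure in \Cref{Thm:PropertiesOfFibration}.
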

\begin{lemma}\label{L:ExistenceOfLiftedAction}
For any $[\chi]\in\Delta_{\Sigma}$, every $v\in E_{\Sigma}^{[\chi]}$, and every $x\in X_{\Lambda}$ there is a unique $\eta_{[\chi]}(v)x\in X_{\Lambda}$ such that $\Phi(\eta_{[\chi]}(v)x) = \Phi(x) + v$ and 
\begin{align}
\lim_{n\to\infty}\intd(\alpha(a)^{n}x,\alpha(a)^{n}\eta_{[\chi]}(v)x) = 0,
\end{align}
for any $a\in\Sigma$ such that $[\chi](a) < 0$. Moreover, the point $\eta_{[\chi]}(v)x$ depends continuously on $v\in E_{\Sigma}^{[\chi]}$, where the continuity is uniform in $x$.
\end{lemma}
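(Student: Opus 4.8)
The plan is to construct $\eta_{[\chi]}(v)x$ as the unique intersection of the fiber $\Phi^{-1}(\Phi(x)+v)$ with the appropriate stable-type manifold through $x$, exploiting the fact that the coarse subspace $E_{\Sigma}^{[\chi]}$ is contracted by any $a\in\Sigma$ with $[\chi](a)<0$. Concretely, fix such an $a$. On the base $\mathbb{T}^{d}$ the automorphism $\rho(a)$ contracts the coarse subspace $E_{\Sigma}^{[\chi]}$ exponentially, so the translate $\Phi(x)+v$ stays on the $\rho(a)$-stable manifold of $\Phi(x)$ inside $\mathbb{T}^{d}$. I would first verify the uniqueness claim: if $p,q\in\Phi^{-1}(\Phi(x)+v)$ both satisfy the forward-synchronization condition $\intd(\alpha(a)^{n}x,\alpha(a)^{n}p)\to 0$, then $p$ and $q$ lie on the same center fiber $W^{c}(p)=W^{c}(q)$, and $\intd(\alpha(a)^{n}p,\alpha(a)^{n}q)\to 0$; since $\alpha$ preserves the center metric $\intd_{c}$ by \Cref{L:InvariantCenterMetric} and acts on the circle fiber isometrically (via comparability of $\intd_c$ with $\intd$ along $W^c$, using \Cref{L:CenterTranslationAction}), the center distance $\intd_{c}(p,q)$ is constant under $\alpha(a)^{n}$; combined with convergence to $0$ this forces $p=q$.

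For existence, I would run a graph-transform / lifting argument. Lift everything to the universal cover $N$. Choose a sequence of base points $y_{n}:=\rho(a)^{n}(\Phi(x)+v)$ and $z_{n}:=\rho(a)^{n}\Phi(x)$; by the contraction on $E_{\Sigma}^{[\chi]}$ we have $\intd(y_{n},z_{n})\to 0$ exponentially. Applying \Cref{L:LiftOfClosePointsFromBase} to the pair $(\alpha(a)^{n}x,\,\text{a lift of }y_{n})$ produces, for each large $n$, a point $p_{n}\in X_{\Lambda}$ with $\Phi(p_{n})=\Phi(x)+v$ and $\intd(\alpha(a)^{n}x,\alpha(a)^{n}p_{n})\leq C\intd(z_{n},y_{n})^{\beta}\to 0$. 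I would then pull back by $\alpha(a)^{-n}$ and show that the candidates $\alpha(a)^{-n}p_{n}$ (appropriately chosen on the correct fiber) form a Cauchy sequence in $X_{\Lambda}$: the difference between consecutive terms is governed by the same contraction estimate, so the telescoping sum converges geometrically. The limit is the desired point $\eta_{[\chi]}(v)x$, and by construction it satisfies both $\Phi(\eta_{[\chi]}(v)x)=\Phi(x)+v$ and the synchronization property.

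The continuity of $v\mapsto\eta_{[\chi]}(v)x$, uniform in $x$, follows from uniformity of the estimates above: the constants $C,\beta,\varepsilon_{0}$ in \Cref{L:LiftOfClosePointsFromBase} are independent of the base point, and the contraction rate on $E_{\Sigma}^{[\chi]}$ is uniform, so the geometric convergence producing $\eta_{[\chi]}(v)x$ is uniform over $x$ and locally uniform over $v$; standard Arzelà–Ascoli-type reasoning upgrades this to joint continuity. One should also check independence of the auxiliary element $a$: if $a'\in\Sigma$ also satisfies $[\chi](a')<0$, then the two resulting points coincide, which again reduces to the uniqueness argument applied to $\alpha(a)$ and $\alpha(a')$ simultaneously, using that $E_{\Sigma}^{[\chi]}$ is the common contracted coarse subspace and that both $\alpha(a),\alpha(a')$ preserve the center metric.

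I expect the main obstacle to be controlling the center-fiber ambiguity throughout the existence argument. \Cref{L:LiftOfClosePointsFromBase} only locates \emph{some} point $p$ on the correct fiber $\Phi^{-1}(\Phi(x)+v)$ that is geometrically close to $x$, but the fiber is a whole circle $W^{c}$, so there is a one-parameter family of candidate lifts differing by the center translation $\eta_{c}$. The delicate point is to pin down the correct element of this family consistently as $n$ varies so that the pulled-back sequence genuinely contracts rather than drifting around the circle; this is exactly where the $\alpha$-invariance of the center metric $\intd_{c}$ (\Cref{L:InvariantCenterMetric}) and the isometric character of the center dynamics become essential, since they prevent the center component of the error from being amplified and guarantee the synchronization condition selects a unique branch.
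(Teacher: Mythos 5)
Your construction of $\eta_{[\chi]}(v)x$ is the paper's: locate approximate lifts of $\rho(a)^{n}(\Phi(x)+v)$ near $\alpha(a)^{n}x$ via Lemma \ref{L:LiftOfClosePointsFromBase}, pull back by $\alpha(a)^{-n}$, and use that $\alpha(a)$ is isometric for $\intd_{c}$ (Lemma \ref{L:InvariantCenterMetric}) so that the pulled-back points form a Cauchy sequence on the fixed center leaf. Your identification of the center-fiber ambiguity as the crux, and of the invariant center metric as the tool controlling it, is exactly right, and your uniqueness argument is the intended one.

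The step that does not go through as written is the continuity in $v$, which is part of the statement. Arzel\`a--Ascoli-type reasoning would require the approximants $v\mapsto \alpha(a)^{-n}p_{n}(v)$ to form an equicontinuous family, but Lemma \ref{L:LiftOfClosePointsFromBase} only asserts the \emph{existence} of some point on the target fiber, with no canonical or continuous choice, so there is no continuous family of approximants to pass to the limit with. The paper instead extracts from the Cauchy estimates the quantitative bound $\intd(x,\eta_{[\chi]}(v)x)\leq c\norm{v}^{\beta}$ uniformly in $x$, observes that the defining properties force the additivity $\eta_{[\chi]}(v+v')x=\eta_{[\chi]}(v)\eta_{[\chi]}(v')x$, and concludes
\begin{align*}
\intd(\eta_{[\chi]}(v)x,\eta_{[\chi]}(v')x)=\intd(\eta_{[\chi]}(v-v')\eta_{[\chi]}(v')x,\eta_{[\chi]}(v')x)\leq c\norm{v-v'}^{\beta},
\end{align*}
i.e.\ H\"older continuity in $v$ uniformly in $x$. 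This matters beyond the continuity clause itself: your reduction of the independence of $a$ to ``uniqueness applied to $\alpha(a)$ and $\alpha(a')$ simultaneously'' is circular as stated, since uniqueness is relative to a fixed contracting element; to see that the point built with $a$ also synchronizes under $\alpha(a')$ one uses the equivariance $\alpha(a')\eta_{[\chi]}(v)x=\eta_{[\chi]}(\rho(a')v)\alpha(a')x$ together with $\rho(a')^{n}v\to0$ and precisely the continuity of $w\mapsto\eta_{[\chi]}(w)y$ at $w=0$. Replace the Arzel\`a--Ascoli step by this additivity-plus-H\"older argument and the proof is complete.
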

\begin{proof}
Fix $a\in\Sigma$ such that $[\chi](a) < 0$ and let $g = \alpha(a)$. Fix $x\in X_{\Lambda}$ and $v\in E_{\Sigma}^{[\chi]}$. After possibly exchanging $x$ by $g^{n}x$ and $v$ by $\rho(a)^{n}v$ we may assume that $\norm{v} < \varepsilon_{0}$. We fix $\lambda\in(0,1)$ such that $\norm{\rho(a)|_{E_{\Sigma}^{[\chi]}}}\leq\lambda$.

Let $\Phi^{-1}(\Phi(x) + v) = W^{c}(y)$. Since we assume that $\norm{v} < \varepsilon_{0}$ we find $p_{0}\in W^{c}(y)$ such that $\intd(x,p_{0}) < \varepsilon_{0}$. Write $x_{n} = g^{n}x$ and let $p_{n}(v) = p_{n}\in W^{c}(g^{n}p_{0})$ be chosen such that $\intd(x_{n},p_{n})\leq C\intd(\Phi(x),\Phi(p_{0}))^{\beta}\tau^{n}$ with $\tau = \lambda^{\beta}\in(0,1)$. Such a choice is possible by Lemma \ref{L:LiftOfClosePointsFromBase} since 
\begin{align}
\intd(\Phi(x_{n}),\Phi(g^{n}p_{0})) = \intd(\rho(a)^{n}\Phi(x),\rho(a)^{n}\Phi(x) + \rho(a)^{n}v)\leq\lambda^{n}\cdot\norm{v}.
\end{align}
It is immediate
\begin{align*}
\intd_{c}(gp_{n-1},p_{n})\leq & \intd(gp_{n-1},gx_{n-1}) + \intd(p_{n},x_{n})\leq \\ &
\norm{Df}\cdot\intd(p_{n-1},x_{n-1}) + \intd(p_{n},x_{n})\leq \\ &
C\left[\frac{\norm{Df}}{\tau} + 1\right]\intd(\Phi(x),\Phi(p_{0}))^{\beta}\tau^{n} =: \\ &
K\intd(\Phi(x),\Phi(p_{0}))^{\beta}\tau^{n}.
\end{align*}
Given $m\leq n$ we obtain by induction
\begin{align}\label{Eq:InductiveEstimate}
\intd_{c}(g^{m}p_{n-m},p_{n})&\leq\intd_{c}(g^{m}p_{n-m},gp_{n-1}) + \intd_{c}(p_{n},gp_{n-1})\leq \\ &
\intd_{c}(g^{m-1}p_{n-m},p_{n-1}) + K\intd(\Phi(x),\Phi(p_{0}))^{\beta}\tau^{n}\leq...\leq \\ &
K\intd(\Phi(x),\Phi(p_{0}))^{\beta}\sum_{j = n-m+1}^{n}\tau^{j} = K'\intd(\Phi(x),\Phi(p_{0}))^{\beta}\tau^{n-m},
\end{align}
where we used that $g$ is isometric with respect to $\intd_{c}$. If $q_{n} = g^{-n}p_{n}\in W^{c}(y)$ then
\begin{align*}
\intd_{c}(q_{n},q_{n+m}) = & \intd_{c}(g^{-n}p_{n},g^{-n-m}p_{n+m}) = \intd_{c}(g^{m}p_{n},p_{n+m})\leq \\ & K'\intd(\Phi(x),\Phi(p_{0}))^{\beta}\tau^{n}
\end{align*}
so the sequence $q_{n}$ is Cauchy. If we denote the limit $q_{\infty} = \lim_{n\to\infty}q_{n}$ then
\begin{align*}
\intd(g^{n}q_{\infty},g^{n}x)\leq & \intd(g^{n}q_{\infty},g^{n}q_{n}) + \intd(g^{n}q_{n},g^{n}x)\leq \\ &
\intd_{c}(g^{n}q_{\infty},g^{n}q_{n}) + \intd(p_{n},g^{n}x)\leq \\ &
K'\intd(\Phi(x),\Phi(p_{0}))^{\beta}\tau^{n} + C\intd(\Phi(x),\Phi(p_{0}))^{\beta}\tau^{n} = \\ &
K''\intd(\Phi(x),\Phi(p_{0}))^{\beta}\tau^{n}
\end{align*}
so $\intd(g^{n}q_{\infty},g^{n}x)\to0$. Finally, we will prove that $\eta_{[\chi]}(v)x$ is independent of which $a$, $[\chi](a) < 0$, was chosen by proving that $v\mapsto\eta_{[\chi]}(v)x$ is continuous in $v$. Indeed, for any other $a'\in\Sigma$ we have $\alpha(a')\eta_{[\chi]}(v)x = \eta_{[\chi]}(\rho(a')v)\alpha(a')x$ since $\alpha(a')$ commute with $g$ and $\Phi(\alpha(a')\eta_{[\chi]}(v)x) = \rho(a')\Phi(x) + \rho(a')v = \Phi(\alpha(a')x) + \rho(a')v$. Since $\rho(a')^{n}v\to0$ it follows that $\intd(\alpha(a')^{n}\eta_{[\chi]}(v)x,\alpha(a')^{n}x)\to0$, provided that $\eta_{[\chi]}(v)x$ is continuous in $v$, which shows that the construction of $\eta_{[\chi]}(v)x$ is independent of which $a\in\Sigma$ was used to construct it.

From Equation \ref{Eq:InductiveEstimate} we have
\begin{align}
\intd_{c}(p_{0}(v),p_{n}(v))\leq K'\intd(\Phi(x),\Phi(p_{0}(v)))^{\beta}
\end{align}
or after letting $n\to\infty$
\begin{align}
\intd_{c}(p_{0}(v),\eta_{[\chi]}(v)x)\leq K'\intd(\Phi(x),\Phi(p_{0}(v)))^{\beta} = K'\norm{v}^{\beta}.
\end{align}
On the other hand, we chose $p_{0}(v)$ such that $\intd(x,p_{0}(v))\leq C\intd(\Phi(x),\Phi(x)+v)^{\beta} = C\norm{v}^{\beta}$ so by the triangle inequality we have $\intd(x,\eta_{[\chi]}(v)x)\leq c\norm{v}^{\beta}$ for some constant $c > 0$. The triangle inequality immediately implies that $\eta_{[\chi]}(v+v')x = \eta_{[\chi]}(v)\eta_{[\chi]}(v')x$ so for $v,v'\in E_{\Sigma}^{[\chi]}$
\begin{align}
\intd(\eta_{[\chi]}(v)x,\eta_{[\chi]}(v')x) = \intd(\eta_{[\chi]}(v-v')\eta_{[\chi]}(v')x,\eta_{[\chi]}(v')x)\leq c\norm{v-v'}^{\beta}
\end{align}
proving that $v\mapsto\eta_{[\chi]}(v)x$ is continuous in $v$ (uniformly in $x$).
\end{proof}
\begin{lemma}\label{L:ContinuityLiftedAction}
The function $\eta_{[\chi]}:E_{\Sigma}^{[\chi]}\times X_{\Lambda}\to X_{\Lambda}$, $\eta_{[\chi]}(v)x$, is a continuous $E_{\Sigma}^{[\chi]}-$action covering the translation action along $E_{\Sigma}^{[\chi]}$ on $\mathbb{T}^{d}$.
\end{lemma}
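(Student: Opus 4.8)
The plan is to verify, in turn, the covering property, the group-action axioms, and joint continuity, treating the last of these as the substantive point. The covering property $\Phi(\eta_{[\chi]}(v)x) = \Phi(x) + v$ is built into the defining characterization in \Cref{L:ExistenceOfLiftedAction}, so $\eta_{[\chi]}$ automatically intertwines with translation by $v$ on $\mathbb{T}^{d}$. For the action axioms, I would argue from uniqueness: the point $x$ itself satisfies the two defining conditions for the parameter $v = 0$, so $\eta_{[\chi]}(0)x = x$; and given $v,v'\in E_{\Sigma}^{[\chi]}$, the point $z := \eta_{[\chi]}(v)\eta_{[\chi]}(v')x$ has $\Phi(z) = \Phi(x) + (v+v')$ and, by the triangle inequality applied to the two asymptotic estimates (for any $a\in\Sigma$ with $[\chi](a) < 0$), satisfies $\intd(\alpha(a)^{n}x,\alpha(a)^{n}z)\to 0$. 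Uniqueness then forces $z = \eta_{[\chi]}(v+v')x$, which is exactly the additivity already observed at the end of the proof of \Cref{L:ExistenceOfLiftedAction}. In particular each $\eta_{[\chi]}(v)$ is a bijection with inverse $\eta_{[\chi]}(-v)$.

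The core of the lemma is continuity. \Cref{L:ExistenceOfLiftedAction} already supplies continuity in $v$ that is uniform in $x$, so I would reduce joint continuity to continuity of $x\mapsto\eta_{[\chi]}(v)x$ for fixed $v$, and then recombine the two via
\[
\intd(\eta_{[\chi]}(v)x,\eta_{[\chi]}(v')x') \leq \intd(\eta_{[\chi]}(v)x,\eta_{[\chi]}(v')x) + \intd(\eta_{[\chi]}(v')x,\eta_{[\chi]}(v')x'),
\]
where the first term is $\leq c\norm{v-v'}^{\beta}$ by the uniform estimate from the previous lemma and the second is controlled by continuity in the base point.

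To establish continuity in $x$, I would extract from the proof of \Cref{L:ExistenceOfLiftedAction} the quantitative bound
\[
\intd(\alpha(a)^{n}x,\alpha(a)^{n}\eta_{[\chi]}(v)x)\leq K''\norm{v}^{\beta}\tau^{n},\qquad \tau\in(0,1),
\]
valid for all $x$ with constants $K''$, $\tau$, $\beta$ independent of $x$ whenever $\norm{v}<\varepsilon_{0}$ (these come from \Cref{L:LiftOfClosePointsFromBase} and $\norm{Df}$, all uniform). Fixing such a $v$ and a sequence $x_{k}\to x$, compactness of $X_{\Lambda}$ lets me pass to a subsequential limit $w=\lim_{j}\eta_{[\chi]}(v)x_{k_{j}}$. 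Continuity of $\Phi$ gives $\Phi(w)=\Phi(x)+v$, and letting $j\to\infty$ in the displayed bound for each fixed $n$ (using continuity of $\alpha(a)^{n}$) yields $\intd(\alpha(a)^{n}x,\alpha(a)^{n}w)\leq K''\norm{v}^{\beta}\tau^{n}$, which tends to $0$ as $n\to\infty$. Thus $w$ satisfies both defining conditions, so $w=\eta_{[\chi]}(v)x$ by uniqueness; since every subsequential limit coincides with $\eta_{[\chi]}(v)x$, we conclude $\eta_{[\chi]}(v)x_{k}\to\eta_{[\chi]}(v)x$. For arbitrary $v$ I would decompose $v=v_{1}+\cdots+v_{m}$ with each $v_{i}\in E_{\Sigma}^{[\chi]}$, $\norm{v_{i}}<\varepsilon_{0}$, and use additivity to write $\eta_{[\chi]}(v)=\eta_{[\chi]}(v_{1})\circ\cdots\circ\eta_{[\chi]}(v_{m})$, a composition of maps already shown continuous. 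The main obstacle is precisely this continuity in the base point $x$: the definition of $\eta_{[\chi]}(v)x$ is only implicit, through a Cauchy limit together with a uniqueness characterization, and is not manifestly continuous in $x$, so the uniform-in-$x$ exponential estimate is essential in order to pin down subsequential limits via the uniqueness clause.
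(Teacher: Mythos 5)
Your proposal is correct, but the mechanism you use for the key step --- continuity in the base point $x$ --- is genuinely different from the paper's. The paper argues by contradiction: assuming a sequence $x_{n}\to x$ with $\intd(\eta_{[\chi]}(v)x_{n},\eta_{[\chi]}(v)x)\geq c>0$, it projects the putative limit points onto the center leaf $W^{c}(\eta_{[\chi]}(v)x)$ and uses that $g=\alpha(a)$ is an isometry for $\intd_{c}$ to show the separation persists under forward iteration, contradicting the uniform decay $\intd(g^{m}y,g^{m}\eta_{[\chi]}(v)y)\leq K'\norm{v}^{\beta}\tau^{m}$. You instead pass to a subsequential limit $w$ of $\eta_{[\chi]}(v)x_{k_j}$ by compactness, verify that $w$ satisfies the two defining conditions of \Cref{L:ExistenceOfLiftedAction} (the covering identity via continuity of $\Phi$, the decay via the same uniform quantitative bound), and invoke the uniqueness clause of that lemma to conclude $w=\eta_{[\chi]}(v)x$. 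Both routes rest on the same uniform-in-$x$ exponential estimate; the difference is where the ``two asymptotic points on a center circle must coincide'' argument lives. In your version it is entirely delegated to the uniqueness assertion of \Cref{L:ExistenceOfLiftedAction} (which the paper states but never isolates as an explicit argument --- its content is essentially the center-isometry contradiction that the paper instead writes out inline here). Your approach is cleaner and more modular, and your explicit reductions (recombining continuity in $v$ and in $x$ via the triangle inequality, and handling large $v$ by additive decomposition into pieces of norm less than $\varepsilon_{0}$) are sound; the trade-off is that a reader must accept that the uniqueness in \Cref{L:ExistenceOfLiftedAction} holds with respect to a single contracting element $a$, which is what your subsequential-limit verification actually supplies, and which does hold by the same center-isometry reasoning.
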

\begin{proof}
We already proved that $\eta_{[\chi]}$ defines an $E_{\Sigma}^{[\chi]}-$action in the proof of Lemma \ref{L:ExistenceOfLiftedAction}. That $\eta_{[\chi]}$ covers the translation action on $\mathbb{T}^{d}$ is immediate from the definition. Since $\eta_{[\chi]}(v)x$ is (uniformly in $x$) continuous in $v$, by Lemma \ref{L:ExistenceOfLiftedAction}, it remains to prove that $\eta_{[\chi]}(v)x$ is continuous in $x$.

As in the proof of Lemma \ref{L:ExistenceOfLiftedAction}, we fix $a\in\Sigma$ such that $[\chi](a) < 0$ and let $g = \alpha(a)$. Assume that $\eta_{[\chi]}(v)$ is not continuous at $x\in X_{\Lambda}$. We find a sequence $x_{n}\to x$ such that $\intd(\eta_{[\chi]}(v)x_{n},\eta_{[\chi]}(v)x)\geq c > 0$. From the definition of $\eta_{[\chi]}(v)$ we have 
\begin{align}
\intd(\Phi(\eta_{[\chi]}(v)x_{n}),\Phi(\eta_{[\chi]}(v)x))\leq\intd(\Phi(x_{n}),\Phi(x))\to0,
\end{align}
so by Lemma \ref{L:LiftOfClosePointsFromBase} we find $y_{n}\in W^{c}(\eta_{[\chi]}(v)x)$ such that $\lim_{n\to\infty}\intd(\eta_{[\chi]}(v)x_{n},y_{n}) = 0$. For $n$ sufficiently large we obtain
\begin{align}
\intd(y_{n},\eta_{[\chi]}(v)x)\geq c - \intd(\eta_{[\chi]}(v)x_{n},y_{n})\geq c/2 > 0.
\end{align}
As in the proof of Lemma \ref{L:ExistenceOfLiftedAction} we have
\begin{align*}
\intd(g^{m}x,g^{m}\eta_{[\chi]}(v)x),\intd(g^{m}x_{n},g^{m}\eta_{[\chi]}(v)x_{n})\leq K'\norm{v}^{\beta}\tau^{m},\quad\tau\in(0,1).
\end{align*}
For $m\geq0$ we have
\begin{align*}
& \intd(g^{m}\eta_{[\chi]}(v)x_{n},g^{m}\eta_{[\chi]}(v)x)\leq \\ &
\intd(g^{m}\eta_{[\chi]}(v)x,g^{m}x) + \intd(g^{m}x,g^{m}x_{n}) + \intd(g^{m}\eta_{[\chi]}(v)x_{n},g^{m}x_{n})\leq \\ &
2K'\norm{v}^{\beta}\tau^{m} + \intd(g^{m}x,g^{m}x_{n})
\end{align*}
and
\begin{align*}
\intd(g^{m}\eta_{[\chi]}(v)x_{n},g^{m}\eta_{[\chi]}(v)x)\geq & \intd_{c}(g^{m}y_{n},g^{m}\eta_{[\chi]}(v)x) - \intd(g^{m}y_{n},g^{m}\eta_{[\chi]}(v)x_{n})\geq \\ &
\frac{c}{2} - \intd(g^{m}y_{n},g^{m}\eta_{[\chi]}(v)x_{n})
\end{align*}
so combined
\begin{align}\label{Eq:ContradictionEquation}
\frac{c}{2}\leq 2K'\norm{v}^{\beta}\tau^{m} + \intd(g^{m}x,g^{m}x_{n}) + \intd(g^{m}y_{n},g^{m}\eta_{[\chi]}(v)x_{n}).
\end{align}
With $m$ large and fixed such that $2K'\norm{v}^{\beta}\tau^{m} < c/4$, Equation \ref{Eq:ContradictionEquation} gives a contradiction as $n\to\infty$ since $\intd(x,x_{n}),\intd(y_{n},\eta_{[\chi]}(v)x_{n})\to0$ and $g^{m}$ is continuous.
\end{proof}
\begin{lemma}\label{L:PropertiesOfLiftedTranslarionAction}
The following properties hold for each $[\chi]\in\Delta_{\Sigma}$
\begin{enumerate}[label = (\roman*)]
    \item for $a\in\Sigma$ we have $\alpha(a)\eta_{[\chi]}(v) = \eta_{[\chi]}(\rho(a)v)\alpha(a)$,
    \item the action $\eta_{[\chi]}$ commute with the center action $\eta_{c}$,
    \item the action $\eta_{[\chi]}$ preserve the $\Gamma-$invariant measure $\mu$.
\end{enumerate}
In particular, the maps $\eta_{[\chi]}(v)$ preserve the $W^{c}-$foliation.
\end{lemma}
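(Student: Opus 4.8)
The plan is to obtain properties (i) and (ii) directly from the uniqueness clause of \Cref{L:ExistenceOfLiftedAction}, and then to leverage (ii) to establish the measure invariance (iii), which is the only substantive point.

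For (i), I would check that the point $\alpha(a)\eta_{[\chi]}(v)x$ satisfies the two defining properties of $\eta_{[\chi]}(\rho(a)v)\alpha(a)x$. Since $\Sigma$ is abelian, $\rho(a)$ preserves each coarse subspace $E_{\Sigma}^{[\chi]}$, so $\rho(a)v\in E_{\Sigma}^{[\chi]}$ and the right-hand side is meaningful. The projection condition is immediate from $\Gamma$-equivariance of $\Phi$: $\Phi(\alpha(a)\eta_{[\chi]}(v)x)=\rho(a)\Phi(\eta_{[\chi]}(v)x)=\Phi(\alpha(a)x)+\rho(a)v$. For the contraction condition, given $b\in\Sigma$ with $[\chi](b)<0$, commutativity of $\alpha(a)$ with $\alpha(b)$ and uniform continuity of the homeomorphism $\alpha(a)$ on the compact manifold $X_{\Lambda}$ reduce $\intd(\alpha(b)^{n}\alpha(a)x,\alpha(b)^{n}\alpha(a)\eta_{[\chi]}(v)x)$ to $\intd(\alpha(a)\alpha(b)^{n}x,\alpha(a)\alpha(b)^{n}\eta_{[\chi]}(v)x)\to0$. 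Uniqueness then forces $\alpha(a)\eta_{[\chi]}(v)x=\eta_{[\chi]}(\rho(a)v)\alpha(a)x$.

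Property (ii) follows by the identical uniqueness argument with $\eta_{c}(s)$ in place of $\alpha(a)$. Because $\eta_{c}(s)$ preserves each fiber of $\Phi$, the point $\eta_{c}(s)\eta_{[\chi]}(v)x$ has the correct projection $\Phi(x)+v=\Phi(\eta_{c}(s)x)+v$; and because $\eta_{c}(s)$ commutes with every $\alpha(a)$ by \Cref{L:CenterTranslationAction} and is uniformly continuous, the contraction estimate for $\eta_{[\chi]}(v)x$ transfers to $\eta_{c}(s)\eta_{[\chi]}(v)x$. Uniqueness yields $\eta_{c}(s)\eta_{[\chi]}(v)x=\eta_{[\chi]}(v)\eta_{c}(s)x$. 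The final ``in particular'' is then immediate, and in fact follows already from the projection identity: $\eta_{[\chi]}(v)$ carries the fiber $\Phi^{-1}(\Phi(x))$ onto $\Phi^{-1}(\Phi(x)+v)$, and these fibers are precisely the center leaves, so $W^{c}$ is preserved.

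The main work is (iii), and I expect the one delicate point to be the identification of the center conditionals with Lebesgue. Property (ii) shows that $\eta_{[\chi]}(v)$ restricted to a center leaf intertwines the two $\eta_{c}$-actions: writing $x'=\eta_{[\chi]}(v)x$, the relation $\eta_{[\chi]}(v)\eta_{c}(s)x=\eta_{c}(s)x'$ means that $\eta_{[\chi]}(v)$ is the identity in the $\eta_{c}$-orbit parametrizations $s\mapsto\eta_{c}(s)x$ and $s\mapsto\eta_{c}(s)x'$. By the construction in \Cref{L:CenterTranslationAction}, each $\eta_{c}(s)$ preserves the conditional $\mu_{y}^{c}$ and realizes rotation number $s$ on the circle leaf; since the free transitive $\mathbb{T}$-action is uniquely ergodic and $\mu_{y}^{c}$ is equivalent to Lebesgue, $\mu_{y}^{c}$ must be the pushforward of normalized Lebesgue under $s\mapsto\eta_{c}(s)x$, where $y=\Phi(x)$. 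Combining this with the intertwining identity gives $\eta_{[\chi]}(v)_{*}\mu_{y}^{c}=\mu_{y+v}^{c}$. Integrating this fiberwise equality against ${\rm vol}_{\mathbb{T}^{d}}$ and using that $\eta_{[\chi]}(v)$ covers the ${\rm vol}$-preserving translation $y\mapsto y+v$ (\Cref{L:ContinuityLiftedAction}), the change of variables $y\mapsto y+v$ yields $\eta_{[\chi]}(v)_{*}\mu=\mu$, which is (iii).
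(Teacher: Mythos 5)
Your proposal is correct and follows essentially the same route as the paper, which simply asserts that (i) and (ii) are immediate from the defining (uniqueness) properties of $\eta_{[\chi]}$ together with $\Phi$-equivariance and the commutation of $\eta_{c}$ with $\alpha$, and that (iii) follows from (ii). Your fiberwise unique-ergodicity argument for (iii) is a valid filling-in of the detail the paper leaves implicit.
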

\begin{proof}
Property $(iii)$ follows from $(ii)$. Properties $(i)$ and $(ii)$ are immediate from the definition since $\eta_{c}$ commute with $\alpha$ and since $\Phi(\alpha(a)x) = \rho(a)\Phi(x)$.
\end{proof}
\begin{lemma}
Given $v_{1}\in E_{\Sigma}^{[\chi]}$ and $v_{2}\in E_{\Sigma}^{[\lambda]}$ with $[\lambda]\neq\pm[\chi]$ we have $\eta_{[\chi]}(v_{1})\eta_{[\lambda]}(v_{2}) = \eta_{[\lambda]}(v_{2})\eta_{[\chi]}(v_{1})$.
\end{lemma}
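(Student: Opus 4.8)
The plan is to exploit the uniqueness built into the defining contraction property of the lifted actions, together with the fact that $\alpha$ acts isometrically along the center. Set $p = \eta_{[\chi]}(v_{1})\eta_{[\lambda]}(v_{2})x$ and $q = \eta_{[\lambda]}(v_{2})\eta_{[\chi]}(v_{1})x$. Since each $\eta_{[\cdot]}(v)$ covers translation by $v$ on $\mathbb{T}^{d}$ (\Cref{L:ContinuityLiftedAction}), both $p$ and $q$ satisfy $\Phi(p) = \Phi(q) = \Phi(x) + v_{1} + v_{2}$; in particular $p$ and $q$ lie on a common center leaf $W^{c}(p) = W^{c}(q)$. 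The goal is therefore reduced to showing $p = q$ as points of this single circle leaf, for every $x$.

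The key step is to produce a single element of $\Sigma$ that simultaneously contracts both coarse directions. Because $[\lambda]\neq\pm[\chi]$, the Lyapunov functionals $\chi$ and $\lambda$ are linearly independent on $\Sigma\otimes\mathbb{R}\cong\mathbb{R}^{k}$, so the intersection of open half-spaces $\{\chi < 0\}\cap\{\lambda < 0\}$ is a nonempty open cone; being a nonempty open cone it contains a lattice point. Fix $a\in\Sigma$ with $[\chi](a) < 0$ and $[\lambda](a) < 0$ and set $g = \alpha(a)$. This is the only place where the independence hypothesis is used, and it is essential: if the two coarse exponents were opposite, $[\lambda] = -[\chi]$, then no element would contract both and the argument would collapse.

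With such $a$ fixed, I would push $p$ and $q$ forward by $g^{n}$ and apply the intertwining relation $\alpha(a)\eta_{[\cdot]}(v) = \eta_{[\cdot]}(\rho(a)v)\alpha(a)$ from \Cref{L:PropertiesOfLiftedTranslarionAction}$(i)$, obtaining
\begin{align}
g^{n}p = \eta_{[\chi]}(\rho(a)^{n}v_{1})\eta_{[\lambda]}(\rho(a)^{n}v_{2})g^{n}x, \qquad g^{n}q = \eta_{[\lambda]}(\rho(a)^{n}v_{2})\eta_{[\chi]}(\rho(a)^{n}v_{1})g^{n}x.
\end{align}
Since $[\chi](a), [\lambda](a) < 0$, we have $\rho(a)^{n}v_{1}\to 0$ and $\rho(a)^{n}v_{2}\to 0$. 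The uniform Hölder estimate $\intd(z,\eta_{[\cdot]}(w)z)\leq c\norm{w}^{\beta}$ from \Cref{L:ExistenceOfLiftedAction} (applicable once $\norm{\rho(a)^{n}v_{i}} < \varepsilon_{0}$) then gives $\intd(g^{n}x, g^{n}p)\to 0$ and $\intd(g^{n}x, g^{n}q)\to 0$, whence $\intd(g^{n}p, g^{n}q)\to 0$ by the triangle inequality.

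Finally, $g$ preserves the center foliation and is an isometry of the center metric $\intd_{c}$ (\Cref{Thm:LatticeEquivariantFibration} and \Cref{L:InvariantCenterMetric}), so $\intd_{c}(g^{n}p, g^{n}q) = \intd_{c}(p,q)$ is constant in $n$. On the other hand, $g^{n}p$ and $g^{n}q$ stay on a common center leaf and become arbitrarily $\intd$-close, and since $\intd_{c}$ is comparable to $\intd$ for small distances along leaves, this forces $\intd_{c}(g^{n}p, g^{n}q)\to 0$. Hence $\intd_{c}(p,q) = 0$, i.e. $p = q$, which is the asserted commutation. The main obstacle is exactly the selection of the common contracting element $a$; once that is secured, the statement follows from the contraction and isometry estimates already in hand.
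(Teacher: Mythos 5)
Your proposal is correct and follows essentially the same route as the paper: both arguments pick $a\in\Sigma$ contracting $E_{\Sigma}^{[\chi]}$ and $E_{\Sigma}^{[\lambda]}$ simultaneously (possible exactly because $[\lambda]\neq\pm[\chi]$), observe that the two points in question lie on a common center leaf, show their images under $g^{n}=\alpha(a)^{n}$ converge together, and conclude from the isometry of $g$ along $W^{c}$. The only (harmless) difference is bookkeeping: the paper telescopes the commutator $\eta_{[\lambda]}(-v_{2})\eta_{[\chi]}(-v_{1})\eta_{[\lambda]}(v_{2})\eta_{[\chi]}(v_{1})x$ against $x$ using the defining limits, while you compare both compositions to $g^{n}x$ via the intertwining relation and the uniform H\"older bound from Lemma \ref{L:ExistenceOfLiftedAction}.
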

\begin{proof}
Choose some $a\in\Sigma$ such that $\rho(a)$ contracts $E_{\Sigma}^{[\chi]}$ and $E_{\Sigma}^{[\lambda]}$ simultaneously. Let $g = \alpha(a)$. It holds that 
\begin{align}
\intd(g^{n}\eta_{[\chi]}(\pm v_{1})x,g^{n}x),\intd(g^{n}\eta_{[\lambda]}(\pm v_{2})x,g^{n}x)\to0,
\end{align}
for all $x\in X_{\Lambda}$. For any $x\in X_{\Lambda}$ we have $y = \eta_{[\lambda]}(-v_{2})\eta_{\chi}(-v_{1})\eta_{\lambda}(v_{2})\eta_{[\chi]}(v_{1})x\in W^{c}(x)$ since the projection onto $\mathbb{T}^{d}$ is $\Phi(x) + v_{1} + v_{2} - v_{1} - v_{2} = \Phi(x)$. Using that $g$ is isometric along $W^{c}$
\begin{align*}
\intd(x,y) = & \intd(g^{n}x,g^{n}y)\leq \\ &
\intd(g^{n}\eta_{[\lambda]}(-v_{2})\eta_{[\chi]}(-v_{1})\eta_{[\lambda]}(v_{2})\eta_{[\chi]}(v_{1})x,g^{n}\eta_{[\chi]}(-v_{1})\eta_{[\lambda]}(v_{2})\eta_{[\chi]}(v_{1})x) + \\
& \intd(g^{n}\eta_{[\chi]}(-v_{1})\eta_{[\lambda]}(v_{2})\eta_{[\chi]}(v_{1})x,g^{n}\eta_{[\lambda]}(v_{2})\eta_{[\chi]}(v_{1})x) + \\
& \intd(g^{n}\eta_{[\lambda]}(v_{2})\eta_{[\chi]}(v_{1})x,g^{n}\eta_{[\chi]}(v_{1})x) + \\
& \intd(g^{n}\eta_{[\chi]}(v_{1})x,g^{n}x)
\end{align*}
and letting $n\to\infty$ we obtain $\intd(x,y) = 0$.
\end{proof}
\begin{lemma}\label{L:SymplecticPairRelations}
Given $v\in E_{\Sigma}^{[\chi]}$ and $w\in E_{\Sigma}^{-[\chi]}$ there is $s(v,w)\in\mathbb{T}$ such that
\begin{align}
\eta_{-[\chi]}(-w)\eta_{[\chi]}(-v)\eta_{-[\chi]}(w)\eta_{[\chi]}(v) = \eta_{c}(s(v,w)).
\end{align}
\end{lemma}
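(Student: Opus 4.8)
The plan is to write $C(v,w) := \eta_{-[\chi]}(-w)\eta_{[\chi]}(-v)\eta_{-[\chi]}(w)\eta_{[\chi]}(v)$ and to prove that $C(v,w)$ restricts on every center circle to a single element of the center action $\eta_c$, with a rotation number that is independent of the base point. First I would check that $C(v,w)$ preserves each center leaf: since $\eta_{\pm[\chi]}$ covers the translation action along $E_\Sigma^{\pm[\chi]}$ on $\mathbb{T}^d$ (\Cref{L:ContinuityLiftedAction}), the displacement of $\Phi(x)$ telescopes to $v+w-v-w=0$, whence $\Phi(C(v,w)x)=\Phi(x)$ and $C(v,w)x\in W^c(x)=\Phi^{-1}(\Phi(x))$.

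The second step is to recognize $C(v,w)|_{W^c(x)}$ as a center translation. Each $\eta_{[\chi]}(v)$ preserves the measure $\mu$ (property $(iii)$ of \Cref{L:PropertiesOfLiftedTranslarionAction}) and covers a $\mathrm{vol}_{\mathbb{T}^d}$-preserving map, so by uniqueness of the disintegration it carries the conditional $\mu_x^c$ to $\mu_{\eta_{[\chi]}(v)x}^c$; hence the composition $C(v,w)$ is a $\mu_x^c$-preserving homeomorphism of the circle $W^c(x)$. It is orientation preserving, because each factor is joined to the identity through the path $t\mapsto\eta_{\pm[\chi]}(tu)$, $t\in[0,1]$, and the center leaves are oriented (\Cref{Thm:PropertiesOfFibration}). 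Since $\mu_x^c$ is equivalent to Lebesgue (\Cref{lem:cocycleproblem}), conjugating by its distribution function identifies the group of orientation-preserving $\mu_x^c$-preserving circle homeomorphisms with rotations, parameterized by rotation number — and this is exactly how $\eta_c$ was defined in \Cref{L:CenterTranslationAction}. Therefore $C(v,w)|_{W^c(x)}=\eta_c(s(x))$, where $s(x)$ is the rotation number. As the rotation number depends only on the circle map, $s$ is constant along center leaves and descends to $\bar s\colon\mathbb{T}^d\to\mathbb{T}$, which is continuous because $\Phi$ is a quotient map of compact spaces, $\eta_c,\eta_{\pm[\chi]}$ are continuous, and $\eta_c$ is free.

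The core of the proof — and the step that uses higher rank — is to show that $\bar s$ is constant. For any coarse exponent $[\lambda]\neq\pm[\chi]$ and any $u\in E_\Sigma^{[\lambda]}$, the preceding lemma shows that $\eta_{[\lambda]}(u)$ commutes with both $\eta_{[\chi]}$ and $\eta_{-[\chi]}$, hence with $C(v,w)$. Evaluating the identity $C(v,w)\eta_{[\lambda]}(u)x=\eta_{[\lambda]}(u)C(v,w)x$, using that $\eta_{[\lambda]}(u)$ covers translation by $u$ and commutes with $\eta_c$ (property $(ii)$ of \Cref{L:PropertiesOfLiftedTranslarionAction}), and cancelling via freeness of $\eta_c$, gives
\[ \bar s(y+u)=\bar s(y),\qquad y\in\mathbb{T}^d,\ u\in E_\Sigma^{[\lambda]}. \]
Thus $\bar s$ is invariant under translation by the whole subspace $V=\bigoplus_{[\lambda]\neq\pm[\chi]}E_\Sigma^{[\lambda]}$. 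By \Cref{Def:HigherRankAbelianAction} the $V$-translation action on $\mathbb{T}^d$ is minimal, so every orbit is dense, and a continuous $V$-invariant function is constant. Denoting this constant value by $s(v,w)$ yields $C(v,w)=\eta_c(s(v,w))$, as claimed.

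The main obstacle I anticipate is the second step: one must argue that the commutator — a priori only a homeomorphism of $X_\Lambda$ that fixes each center leaf setwise — is genuinely orientation- and $\mu_x^c$-preserving on each circle, so that the rotation-number classification applies and places it inside the one-parameter family $\eta_c$. The remaining ingredients are the telescoping computation of Step~1 and the minimality input of Step~3, the latter being precisely where the argument uses higher rank and would fail in rank one.
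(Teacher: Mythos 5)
Your proof follows the same route as the paper's: the commutator fixes center leaves by the telescoping computation on $\Phi$, restricts on each leaf to an element of $\eta_c$, and the resulting function $\bar s$ on $\mathbb{T}^d$ is continuous and invariant under translation by every $E_{\Sigma}^{[\lambda]}$ with $[\lambda]\neq\pm[\chi]$, hence constant by minimality of the $V$-translation action. The only divergence is the middle step, where the paper argues more directly --- the commutator commutes with the simply transitive circle action $\eta_c$ on each leaf (property $(ii)$ of \Cref{L:PropertiesOfLiftedTranslarionAction}), so once it fixes the leaf it must equal $\eta_c(s)$ for the unique $s$ with $C(v,w)x=\eta_c(s)x$ --- whereas your orientation/measure-preservation/rotation-number argument is valid but does more work than necessary (and its disintegration step only gives $g_*\mu_x^c=\mu_{gx}^c$ for a.e.\ $x$, needing the continuity of $x\mapsto\mu_x^c$ to upgrade to all $x$).
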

\begin{proof}
We can write 
\begin{align}
\eta_{-[\chi]}(-w)\eta_{[\chi]}(-v)\eta_{-[\chi]}(w)\eta_{[\chi]}(v)x = \eta_{c}(s_{\Phi(x)}(v,w))x
\end{align}
since $\eta_{-[\chi]}(-w)\eta_{[\chi]}(-v)\eta_{-[\chi]}(w)\eta_{[\chi]}(v)$ fix center leaves and commute with $\eta_{c}$. The map $\mathbb{T}^{d}\ni y\mapsto s_{y}(v,w)\in\mathbb{T}$ is continuous and, as in \cite{Sandfeldt2024}*{Lemma 6.9}, $s_{y}(v,w)$ is $E_{\Sigma}^{[\lambda]}-$translation invariant for every $[\lambda]\neq\pm[\chi]$. By Definition \ref{Def:HigherRankAbelianAction} (or \cite{Sandfeldt2024}*{Lemma 2.2}) the space 
\begin{align}
V = \bigoplus_{[\lambda]\neq\pm[\chi]}E_{\Sigma}^{[\lambda]}
\end{align}
has a minimal translation action on $\mathbb{T}^{d}$, so $s_{y}(v,w)$ is independent of $y$.
\end{proof}
The proof of the following Theorem is essentially contained in \cite{Sandfeldt2024}*{Section 7}.
\begin{theorem}
There is a homeomorphism $H:X_{\Lambda}\to X_{\Lambda}$ such that $H\circ\alpha|_{\Sigma}\circ H^{-1} = \alpha_{0}|_{\Sigma}$ is an affine action. Moreover, $H$ satisfy $H(\eta_{c}(s)x) = H(x)e^{sZ}$ where $Z$ is a generator of $E_{0}^{c}$.
\end{theorem}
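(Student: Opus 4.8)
The plan is to use the commuting coarse Lyapunov actions $\eta_{[\chi]}$ together with the center action $\eta_c$ to exhibit a transitive action of a $2$-step nilpotent Lie group on $X_\Lambda$, and then take $H$ to be the identification of $X_\Lambda$ with the resulting model nilmanifold. First I would assemble the group $P$ of homeomorphisms generated by all $\eta_{[\chi]}(v)$, $v\in E_\Sigma^{[\chi]}$, $[\chi]\in\Delta_\Sigma$, together with all $\eta_c(s)$, $s\in\mathbb{T}$. The relations already in hand describe this group completely: $\eta_{[\chi]}$ and $\eta_{[\lambda]}$ commute whenever $[\lambda]\neq\pm[\chi]$ (the commutation lemma above), the commutator of a $\pm[\chi]$ pair lands in the center $\eta_c(\mathbb{T})$ through the pairing $s(v,w)$ (Lemma~\ref{L:SymplecticPairRelations}), and $\eta_c$ is central and preserved by each $\eta_{[\chi]}$ (Lemma~\ref{L:PropertiesOfLiftedTranslarionAction}). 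By the higher rank hypothesis (Definition~\ref{Def:HigherRankAbelianAction}) the quantity $s(v,w)$ is independent of the base point, so $(v,w)\mapsto s(v,w)$ is a fixed bilinear form on $\mathbb{R}^d=\bigoplus_{[\chi]}E_\Sigma^{[\chi]}$. These are exactly the structure relations of the simply connected $2$-step nilpotent Lie group $\hat N$ with one-dimensional center, acting on $X_\Lambda$ through $P$, where the central $\mathbb{R}\subset\hat N$ acts via $\eta_c$ after the quotient $\mathbb{R}\to\mathbb{T}$.

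Next I would fix a base point $x_0$ with $\Phi(x_0)=0$ and study the orbit map $o\colon\hat N\to X_\Lambda$, $g\mapsto g\cdot x_0$. It is surjective: the factors $\eta_{[\chi]}$ cover the full translation action on the base $\mathbb{T}^d=\bigoplus_{[\chi]} E_\Sigma^{[\chi]}/\mathbb{Z}^d$ (Lemma~\ref{L:ContinuityLiftedAction}), reaching every fiber of $\Phi$, while $\eta_c$ acts transitively on each fiber (Lemma~\ref{L:CenterTranslationAction}). The stabilizer $\hat\Lambda=\mathrm{Stab}_{\hat N}(x_0)$ is then a cocompact lattice, and $o$ descends to a continuous bijection $\hat\Lambda\backslash\hat N\to X_\Lambda$, which is a homeomorphism by compactness. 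Since $\hat N$ is $2$-step nilpotent with one-dimensional center it is isomorphic to $N$ by an isomorphism carrying $\hat\Lambda$ to a lattice conjugate to $\Lambda$; composing yields the desired homeomorphism $H\colon X_\Lambda\to X_\Lambda$. By construction $\eta_c$ corresponds to the central one-parameter subgroup, so $H(\eta_c(s)x)=H(x)e^{sZ}$ with $Z$ a generator of $E_0^c$.

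Finally, to see that $\alpha|_\Sigma$ becomes affine, I would use the normalization relation $\alpha(a)\eta_{[\chi]}(v)=\eta_{[\chi]}(\rho(a)v)\alpha(a)$ from Lemma~\ref{L:PropertiesOfLiftedTranslarionAction}(i), together with $\alpha(a)\eta_c(s)=\eta_c(s)\alpha(a)$ (Lemma~\ref{L:CenterTranslationAction}). Conjugation by $\alpha(a)$ thus sends the generators of $\hat N$ back into $\hat N$, inducing an automorphism of $\hat N$ that covers $\rho(a)$ on the base; in the coordinates furnished by $H$ the map $\alpha(a)$ is this automorphism followed by the translation determined by $\alpha(a)x_0$. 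Hence $H\alpha(a)H^{-1}$ is affine for every $a\in\Sigma$, so $H\circ\alpha|_\Sigma\circ H^{-1}=\alpha_0|_\Sigma$ is an affine action.

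I expect the main obstacle to be proving that the orbit map is genuinely injective with lattice stabilizer, i.e.\ that the real actions $\eta_{[\chi]}$, taken over loops in $\mathbb{T}^d$, close up against the center action $\eta_c$ exactly as the lattice $\Lambda$ prescribes, with no extra collapsing. This is precisely where the higher rank hypothesis enters essentially, since it makes $s(v,w)$ a well-defined base-point-independent form and hence produces a genuine group law; the careful matching of periods with the model lattice is the delicate point, and this part follows \cite{Sandfeldt2024}*{Section 7} most closely. Continuity and properness, by contrast, are routine given Lemma~\ref{L:ContinuityLiftedAction} and compactness of $X_\Lambda$.
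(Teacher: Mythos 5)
Your proposal is correct and takes essentially the same route as the paper: both assemble a $2$-step nilpotent group from the $\eta_{[\chi]}$'s and $\eta_{c}$ using the commutation relations and the base-point-independent pairing $s(v,w)$, realize $X_{\Lambda}$ as a homogeneous space of that group via the orbit map at a base point, and deduce affineness of $\alpha|_{\Sigma}$ from the normalization relations in Lemma \ref{L:PropertiesOfLiftedTranslarionAction}. The only substantive difference is one of execution: the paper lifts everything to the universal cover, where the generated group acts simply transitively (so the orbit map is a homeomorphism by invariance of domain and the Lie group structure comes from Gleason--Montgomery--Zippin rather than from an abstract presentation), and it defers the descent and lattice-matching to \cite{Sandfeldt2024}*{Theorem 7.1} --- which is exactly the delicate point you correctly isolate at the end.
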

\begin{proof}
For each $[\chi]\in\Delta_{\Sigma}$ there is a unique lift of $\eta_{[\chi]}$ to an action $\Tilde{\eta}_{[\chi]}:E_{\Sigma}^{[\chi]}\times N\to N$. We also have a unique lift of $\eta_{c}$ to an action $\Tilde{\eta}_{c}:\mathbb{R}\times N\to N$. Finally, we also lift $\Phi:X_{\Lambda}\to\mathbb{T}^{d}$ to a fibration $\Tilde{\Phi}:N\to\mathbb{R}^{d}$. Denote by $\Tilde{N}\subset{\rm Homeo}(N)$ the group generated by $(\Tilde{\eta}_{[\chi]})_{[\chi]\in\Delta_{\Sigma}}$ and $\Tilde{\eta}_{c}$. Note that $\Tilde{\Phi}(\Tilde{\eta}_{[\chi]}(v)x) = \Tilde{\Phi}(x) + v$ so, since the translation action on $\mathbb{R}^{d}$ is transitive (as a group action), it holds that for every $x,y\in N$ there is some $n\in\Tilde{N}$ such that $n(x)\in\Tilde{W}^{c}(y)$ (where $\Tilde{W}^{c}$ is the lifted center foliation). Moreover, $\eta_{c}$ acts transitively on each $W^{c}-$leaf so $\Tilde{\eta}_{c}$ act transitively on each leaf of $\Tilde{W}^{c}$. Combined this implies that $\Tilde{N}$ act transitively on $N$.

Fix an order of the coarse exponents $\Delta_{\Sigma} = \{[\chi_{1}],...,[\chi_{r}]\}$. Denote by $P_{[\chi]}:\mathbb{R}^{d}\to E_{\Sigma}^{[\chi]}$ the projection map. By Lemma \ref{L:SymplecticPairRelations} and property $(ii)$ in Lemma \ref{L:PropertiesOfLiftedTranslarionAction} it is clear that the map
\begin{align}
P:\mathbb{R}^{d}\times\mathbb{R}\to\Tilde{N},\quad P(v,s) := \eta_{[\chi_{1}]}(P_{[\chi_{1}]}v)...\eta_{[\chi_{r}]}(P_{[\chi_{r}]}v)\eta_{c}(s)
\end{align}
is surjective. Suppose that there is $x\in N$ such that $P(v,s)x = x$ for some $v\in\mathbb{R}^{d}$ and $s\in\mathbb{R}$. That is, $\Tilde{\eta}_{[\chi_{1}]}(P_{[\chi_{1}]}v)...\Tilde{\eta}_{[\chi_{r}]}(P_{[\chi_{r}]}v)\Tilde{\eta}_{c}(s)x = x$, so after applying $\Tilde{\Phi}$ on both sides and using that $\eta_{[\chi_{j}]}$ cover a translation action for each $j$ we obtain
\begin{align}
\Tilde{\Phi}(x) + P_{[\chi_{1}]}v+...+P_{[\chi_{r}]}v = \Tilde{\Phi}(x) + v = \Tilde{\Phi}(x)
\end{align}
which implies that $v = 0$. Therefore, if $P(v,s)x = x$ then $v = 0$ so $\Tilde{\eta}_{c}(s)x = x$. Since $\eta_{c}$ acts freely the action $\Tilde{\eta}_{c}$ is free which implies that $s = 0$. That is, if $P(v,s)$ has a fixed point then $v = 0$, $s = 0$, and $P(v,s) = {\rm id}$ so $\Tilde{N}$ act freely on $N$. A similar argument also shows that $P:\mathbb{R}^{d}\times\mathbb{R}\to\Tilde{N}$ is injective, so its a bijection. Moreover, if $P(v,s)$ is close to $P(v',s')$ then $\Tilde{\Phi}(x) + v = \Tilde{\Phi}(P(v,s)x)\approx\Tilde{\Phi}(P(v',s')x) = \Tilde{\Phi}(x) + v'$ so $v$ is close to $v'$. Note that we can write $P(v',0)^{-1}P(v,0) = P(v-v',t(v,v'))$ for some $t(v,v')$ (that depends continuously on $v$ and $v'$). Now, we can write
\begin{align*}
n_{v,v'} = & P(v',s')^{-1}P(v,s) = P(v',0)^{-1}P(0,-s')P(v,0)P(0,s) = \\ &
P(v-v',0)P(0,s-s' + t(v,v'))
\end{align*}
but since $n_{v,v'}$ and $P(v-v',0)$ are both close to identity this implies that $s-s' + t(v,v')$ is close to $0$. Since $t(v,v) = 0$ and $t(v,v')$ vary continuously in $v$ and $v'$ this implies that $t(v,v')$ is close to $0$. So, $s - s'$ is close to zero or equivalently, $s$ is close to $s'$. That is, if $P(v,s)$ is close to $P(v',s')$ then $(v,s)$ is close to $(v',s')$. This implies, since $P$ is bijective, that $P$ is a homeomorphism. The topological group $\Tilde{N}$ is homeomorphic to the Euclidean space $\mathbb{R}^{d}\times\mathbb{R}$ and is $2-$step nilpotent as an abstract group, so $\Tilde{N}$ is a $2-$step nilpotent Lie group \cites{Gleason1952,MontgomeryZippin1952}.

That is, $\Tilde{N}$ is a $2-$step nilpotent Lie group that acts transitively and freely on $N$. We fix $x_{0}\in N$. By invariance of domain the map $Q:\Tilde{N}\to N$, $Q(n) := n(x_{0})$, is a homeomorphism. Property $(i)$ from Lemma \ref{L:PropertiesOfLiftedTranslarionAction} and Lemma \ref{L:CenterTranslationAction} imply that $\alpha(a)\Tilde{N} = \Tilde{N}\alpha(a)$ so $n\mapsto\alpha(a)n\alpha(a)^{-1}$ is a (continuous) automorphism, we denote this automorphism by $\Tilde{\alpha}_{0}(a):\Tilde{N}\to\Tilde{N}$. We have
\begin{align*}
\alpha(a)Q(n) = & \alpha(a)n(x_{0}) = (\Tilde{\alpha}_{0}(a)n)(\alpha(a)x_{0}) = (\Tilde{\alpha}_{0}(a)n)\circ n_{a}(x_{0}) = \\ &
Q(\Tilde{\alpha}_{0}(a)(n)\cdot n_{a}) =: Q(\alpha_{0}(a)(n))
\end{align*}
where $n_{a}\in\Tilde{N}$ is chosen such that $n_{a}(x_{0}) = \alpha(a)x_{0}$ (this defines $n_{a}$ uniquely since $\Tilde{N}$ act transitively and freely). It follows that $Q$ conjugates $\Tilde{\alpha}|_{\Sigma}$ to an affine action on $\Tilde{N}$. Proving that $Q$ defines a homeomorphism $H:X_{\Lambda}\to X_{\Lambda}$ follows as in \cite{Sandfeldt2024}*{Theorem 7.1}.

The final part of the theorem is immediate from our construction of $Q$. Indeed, we have $\eta_{c}(s)Q(n) = \eta_{c}(s)n(x_{0}) = (n\eta_{c}(s))x_{0} = Q(n\eta_{c}(s))$, so $Q^{-1}$ conjugates $\eta_{c}(s)$ to some translation action. That $\eta_{c}(s)$ coincides with $E_{0}^{c}$ is immediate since $\alpha$ commute with $\eta_{c}$ (Lemma \ref{L:PropertiesOfLiftedTranslarionAction} property $(ii)$).
\end{proof}

\subsection{Extending the conjugacy to the full lattice}\label{SubSec:ExtensionOfTopologicalConjugacy}

Next, we will extend the conjugacy $H:X_{\Lambda}\to X_{\Lambda}$ from $\Sigma$ to a conjugacy for all of $\Gamma$. Let $\alpha_{0}(\gamma) := H\alpha(\gamma)H^{-1}$, then $\alpha_{0}$ is a $\Gamma$ action such that $\alpha_{0}|_{\Sigma}$ is an action by affine maps. We denote by $\Hat{\rho}:\Gamma\to{\rm Aut}(X_{\Lambda})$ the induced map on fundamental group. It is immediate that $\pi_{*}\Hat{\rho}(\gamma) = \rho(\gamma)\pi_{*}$. Let $\Tilde{\alpha}_{0}:\Gamma\times N\to N$ be a lift of $\alpha_{0}$ to $N$. Define a map $C:\Gamma\times N\to\mathfrak{n}$ by $\Tilde{\alpha}_{0}(\gamma)x = \Hat{\rho}(\gamma)x\cdot e^{C(\gamma,x)}$. From the construction of $H$, and the fact that $\eta_{[\chi]}$ cover the corresponding translation action on $\mathbb{T}^{d}$ (Lemma \ref{L:ContinuityLiftedAction}), it is clear that $\pi\circ H = \Phi$. It follows that $\pi:N\to N/[N,N]$ satisfies $\pi\Tilde{\alpha}_{0}(\gamma)x = \rho(\gamma)\pi(x)$ so the map $C:\Gamma\times N\to\mathfrak{n}$ takes values in $\ker D\pi = E_{0}^{c}$. We have
\begin{align*}
\Hat{\rho}(\gamma\gamma')x\cdot e^{C(\gamma\gamma',x)} = & \Tilde{\alpha}(\gamma\gamma')x = \Tilde{\alpha}(\gamma)\Tilde{\alpha}(\gamma')x = \Hat{\rho}(\gamma)(\Tilde{\alpha}(\gamma')x)\cdot e^{C(\gamma,\Tilde{\alpha}_{0}(\gamma')x)} = \\ &
\Hat{\rho}(\gamma)\Hat{\rho}(\gamma')x\cdot e^{\Hat{\rho}(\gamma)C(\gamma',x) + C(\gamma,\Tilde{\alpha}_{0}(\gamma')x)} = \\ &
\Hat{\rho}(\gamma\gamma')x\cdot e^{C(\gamma',x) + C(\gamma,\Tilde{\alpha}_{0}(\gamma')x)}
\end{align*}
where we have used that $\Hat{\rho}(\gamma)$ acts trivially on $E_{0}^{c}$. It follows that 
\begin{align*}
C(\gamma\gamma',x) = C(\gamma,\Tilde{\alpha}_{0}(\gamma')x) + C(\gamma',x)
\end{align*}
so $C$ is a $E_{0}^{c}-$valued cocycle over the action $\Tilde{\alpha}_{0}$. Moreover, for $\lambda\in\Lambda$ we have $\Tilde{\alpha}_{0}(\gamma)\lambda x = \Hat{\rho}(\gamma)\lambda\cdot\Tilde{\alpha}_{0}(\gamma)x$ so
\begin{align}
\Hat{\rho}(\gamma)(\lambda x)\cdot e^{C(\gamma,\lambda x)} = \Hat{\rho}(\gamma)\lambda\cdot\Hat{\rho}(\gamma)x\cdot e^{C(\gamma,x)}
\end{align}
or $C(\gamma,\lambda x) = C(\gamma,x)$. It follows that $C$ descend to a map $C:\Gamma\times X_{\Lambda}\to E_{0}^{c}$. That is, $C$ is a $E_{0}^{c}-$valued cocycle over $\alpha_{0}$.
\begin{lemma}\label{L:CharacterizationOfDefectCocycle}
The cocycle $C$ factor through a cocycle $Z:\Gamma\times\mathbb{T}^{d}\to E_{0}^{c}$.
\end{lemma}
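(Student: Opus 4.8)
The plan is to prove that $C(\gamma,\cdot)$ is constant along the fibers of $\pi\colon X_{\Lambda}\to\mathbb{T}^{d}$. Since those fibers are exactly the orbits of the center action (equivalently the center leaves of $f$, pushed forward by $H$), this invariance will immediately produce a well-defined $Z\colon\Gamma\times\mathbb{T}^{d}\to E_{0}^{c}$ with $C(\gamma,x)=Z(\gamma,\pi(x))$. The whole argument is driven by the commutation of the action with the center translation, so the first step is to transport the center action to the $\alpha_{0}$-picture: by the final assertion of the preceding theorem we have $H(\eta_{c}(s)x)=H(x)e^{sZ}$ with $Z$ a generator of $E_{0}^{c}$, so the conjugated center action $H\eta_{c}(s)H^{-1}$ is the right translation $y\mapsto ye^{sZ}$, and its lift to $N$ commutes with $\Tilde{\alpha}_{0}$ because $\alpha$ commutes with $\eta_{c}$ (Lemma \ref{L:CenterTranslationAction} and property $(ii)$ of \Cref{L:PropertiesOfLiftedTranslarionAction}).

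The core computation is then to evaluate $\Tilde{\alpha}_{0}(\gamma)(xe^{sZ})$ in two ways. On one hand, commutation with right translation gives $\Tilde{\alpha}_{0}(\gamma)(xe^{sZ})=\big(\Tilde{\alpha}_{0}(\gamma)x\big)e^{sZ}=\Hat{\rho}(\gamma)x\cdot e^{C(\gamma,x)}e^{sZ}$. On the other hand, since $\Hat{\rho}(\gamma)$ is an automorphism acting trivially on $E_{0}^{c}$, we have $\Hat{\rho}(\gamma)(xe^{sZ})=\Hat{\rho}(\gamma)x\cdot e^{sZ}$, so by the defining relation of $C$ we also get $\Tilde{\alpha}_{0}(\gamma)(xe^{sZ})=\Hat{\rho}(\gamma)x\cdot e^{sZ}\cdot e^{C(\gamma,\,xe^{sZ})}$. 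Because $e^{sZ}$ and $e^{C(\gamma,x)}$ both lie in $[N,N]$, which is central in $N$, they commute; cancelling $\Hat{\rho}(\gamma)x\cdot e^{sZ}$ on the left yields $e^{C(\gamma,\,xe^{sZ})}=e^{C(\gamma,x)}$. As $\exp$ restricts to a diffeomorphism on the one-dimensional space $E_{0}^{c}$, this forces $C(\gamma,xe^{sZ})=C(\gamma,x)$.

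Finally I would combine this center-invariance with the $\Lambda$-invariance $C(\gamma,\lambda x)=C(\gamma,x)$ already established. Together these say that $C(\gamma,\cdot)$ is constant on each $[N,N]$-coset modulo $\Lambda$, i.e. on each fiber of $\pi$, so it descends to a continuous function of $\pi(x)\in\mathbb{T}^{d}$; set $Z(\gamma,y):=C(\gamma,x)$ for any $x$ with $\pi(x)=y$. The cocycle identity $C(\gamma\gamma',x)=C(\gamma,\Tilde{\alpha}_{0}(\gamma')x)+C(\gamma',x)$, together with $\pi\Tilde{\alpha}_{0}(\gamma')=\rho(\gamma')\pi$, then descends to $Z(\gamma\gamma',y)=Z(\gamma,\rho(\gamma')y)+Z(\gamma',y)$, so $Z$ is a genuine $E_{0}^{c}$-valued cocycle over the $\rho$-action on $\mathbb{T}^{d}$, which is the assertion of the lemma.

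The main obstacle, such as it is, lies entirely in the first step rather than the computation: one must be sure that the center action transports under $H$ to right translation by $e^{sZ}$ and that this commutes with $\Tilde{\alpha}_{0}$, both of which rest on the explicit structure of $H$ and on the commutation $\alpha\circ\eta_{c}=\eta_{c}\circ\alpha$ from the preceding constructions, and on $\Hat{\rho}(\gamma)$ fixing $E_{0}^{c}$ pointwise. Once these are in place, the centrality of $[N,N]$ makes the cancellation immediate and everything else is bookkeeping.
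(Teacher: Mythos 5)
Your argument is correct and is essentially the paper's own proof: both rest on the identity $H(\eta_{c}(s)x)=H(x)e^{sZ}$, evaluate $\alpha_{0}(\gamma)(xe^{sZ})$ in two ways (once via commutation with the transported center action, once via the defining relation of $C$), and cancel using that $e^{sZ}$ and $e^{C(\gamma,x)}$ commute to get $C(\gamma,xe^{sZ})=C(\gamma,x)$. The extra bookkeeping you supply (the $\Lambda$-invariance and the descent of the cocycle identity to $\rho$ on $\mathbb{T}^{d}$) is left implicit in the paper but is the same reasoning.
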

\begin{proof}
Since $H(\eta_{c}(s)x) = H(x)e^{sZ}$ we obtain
\begin{align*}
\alpha_{0}(\gamma)(xe^{sZ}) = \Hat{\rho}(\gamma)x\cdot e^{sZ}\cdot e^{C(\gamma,xe^{sZ})} = \alpha_{0}(\gamma)x\cdot e^{sZ} = \Hat{\rho}(\gamma)x\cdot e^{C(\gamma,x)}\cdot e^{sZ}
\end{align*}
or $C(\gamma,xe^{sZ}) = C(\gamma,x)$. So $C$ descends to a map $Z:\Gamma\times\mathbb{T}^{d}\to E_{0}^{c}$.
\end{proof}
\begin{theorem}\label{Thm:TopologicalRigidity}
The action $\alpha_{0}$ is affine, so $\alpha$ is topologically conjugated to an action by affine maps.
\end{theorem}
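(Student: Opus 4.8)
The plan is to show that the continuous $E_{0}^{c}$-valued cocycle $C$ is a continuous coboundary, and then to remove it by one final translation along the center. By \Cref{L:CharacterizationOfDefectCocycle} the cocycle $C$ descends to a continuous cocycle $Z\colon\Gamma\times\mathbb{T}^{d}\to E_{0}^{c}\cong\mathbb{R}$ over the linear $\Gamma$-action on $\mathbb{T}^{d}$ given by $\rho$. This base action preserves Haar measure and is ergodic, since $\rho(\gamma_{0}) = L_{su}$ is a hyperbolic, hence ergodic, toral automorphism. As $\Gamma$ is a higher rank lattice it has Kazhdan's property (T), so I would apply \Cref{Thm:MeasurableTrivializationOfCocycle} with $k=1$ to obtain a measurable $\phi\colon\mathbb{T}^{d}\to E_{0}^{c}$ with
\begin{align}
Z(\gamma,y) = \phi(\rho(\gamma)y) - \phi(y)
\end{align}
for every $\gamma\in\Gamma$ and almost every $y\in\mathbb{T}^{d}$.

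The hard part is upgrading the a priori only measurable transfer map $\phi$ to a continuous one. For this I would first observe that $Z$ is in fact Hölder, being assembled from the Hölder fibration $\Phi$ of \Cref{Thm:PropertiesOfFibration} and the Hölder straightening $H$ (whose constituent actions $\eta_{[\chi]}$ were built with Hölder estimates). Restricting the cohomological equation to $\gamma_{0}$ gives $\phi\circ L_{su} - \phi = Z(\gamma_{0},\cdot)$ almost everywhere, where $L_{su}$ is a transitive Anosov automorphism of $\mathbb{T}^{d}$ and $Z(\gamma_{0},\cdot)$ is Hölder. The classical Livšic measurable rigidity theorem then guarantees that $\phi$ agrees almost everywhere with a Hölder continuous function, which I continue to denote by $\phi$; by continuity of $\phi$ and of each $Z(\gamma,\cdot)$ together with density of a full-measure set, the identity $Z(\gamma,y)=\phi(\rho(\gamma)y)-\phi(y)$ then holds for all $\gamma\in\Gamma$ and all $y\in\mathbb{T}^{d}$.

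Finally, let $Z$ generate $E_{0}^{c}$ and let $e^{tZ}$ denote the associated central one-parameter subgroup, which lies in $\ker\pi$. Define $G\colon X_{\Lambda}\to X_{\Lambda}$ by $G(x) = x\cdot e^{-\phi(\pi x)Z}$. Since $e^{-\phi(\pi x)Z}\in\ker\pi$ is central, $G$ preserves the fibers of $\pi$, descends to $X_{\Lambda}$, and is a homeomorphism with inverse $x\mapsto x\cdot e^{\phi(\pi x)Z}$. Using that $\Hat{\rho}(\gamma)$ acts trivially on $E_{0}^{c}$, that $\pi\alpha_{0}(\gamma) = \rho(\gamma)\pi$, and that $C(\gamma,x)=(\phi(\rho(\gamma)\pi x)-\phi(\pi x))Z$, a direct computation gives
\begin{align}
G(\alpha_{0}(\gamma)x) = \Hat{\rho}(\gamma)x\cdot e^{-\phi(\pi x)Z} = \Hat{\rho}(\gamma)G(x),
\end{align}
so $G$ conjugates $\alpha_{0}$ to the automorphism action $\Hat{\rho}\colon\Gamma\to{\rm Aut}(X_{\Lambda})$. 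Replacing the conjugacy $H$ from \Cref{SubSec:ConjugacyForAbelianSubgroup} by $G\circ H$ therefore exhibits $\alpha$ as topologically conjugate to the affine action $\Hat{\rho}$, which is the assertion of the theorem. The only delicate point is the regularity step of the second paragraph, where both Hölder continuity of $Z$ and transitivity of the Anosov base map $L_{su}$ are essential.
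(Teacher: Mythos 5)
Your overall strategy diverges from the paper's at the crucial point. You treat $Z$ as a coboundary to be absorbed into the conjugacy, which forces you to upgrade the measurable transfer function $\phi$ to a continuous one via measurable Liv\v{s}ic rigidity. The paper instead shows that $Z$ vanishes identically, using two soft facts: (1) for $a$ in the Cartan subgroup $\Sigma$ the map $\alpha_{0}(a)=H\alpha(a)H^{-1}$ is already affine by construction, so $Z(a,\cdot)$ is \emph{constant}; (2) the averaged map $\psi(\gamma)=\int_{\mathbb{T}^{d}}Z(\gamma,x)\,\intd x$ is a homomorphism $\Gamma\to E_{0}^{c}$, hence trivial since a higher rank lattice admits no nontrivial homomorphism to a vector space. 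Together these give $Z(a,\cdot)\equiv 0$, so the measurable transfer function is $\rho(a)$-invariant and therefore constant by ergodicity of the hyperbolic automorphism, whence $Z\equiv 0$ and $\alpha_{0}=\Hat{\rho}$ on the nose. No regularity of $Z$ beyond measurability is ever needed, and the first clause of the statement ($\alpha_{0}$ is literally affine) comes for free.

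The gap in your argument is the assertion that $Z(\gamma_{0},\cdot)$ is H\"older. At this stage of the paper the conjugacy $H$ is only known to be a homeomorphism: \Cref{L:ExistenceOfLiftedAction} gives a H\"older modulus for $v\mapsto\eta_{[\chi]}(v)x$ \emph{uniformly in $x$}, but continuity in the base point $x$ is established in \Cref{L:ContinuityLiftedAction} by a soft compactness/contradiction argument with no quantitative modulus whatsoever. (Bi-H\"older regularity of $H$ is only invoked much later, in the accessible case, by citing \cite{Sandfeldt2024}, and is not available here.) Consequently $Z(\gamma_{0},\cdot)$ is known to be continuous but not H\"older, and the measurable Liv\v{s}ic theorem you invoke genuinely requires H\"older (or at least summable-modulus) data over the Anosov map $L_{su}$; for merely continuous coboundaries a measurable solution need not admit a continuous version. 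Since the continuity of $\phi$ is exactly what makes your correcting map $G(x)=x\cdot e^{-\phi(\pi x)Z}$ a homeomorphism, the proof does not close as written. The rest of your argument (the descent of $G$ to $X_{\Lambda}$, the computation $G\alpha_{0}(\gamma)=\Hat{\rho}(\gamma)G$ using that $\Hat{\rho}(\gamma)$ acts trivially on $E_{0}^{c}$) is correct, and would yield the conjugacy statement if the regularity step were repaired --- but the paper's route shows the repair is unnecessary.
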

\begin{proof}
The maps $Z(\gamma,x)$ from Lemma \ref{L:CharacterizationOfDefectCocycle} is a $E_{0}^{c}-$valued cocycle over the action $\rho$. By Theorem \ref{Thm:MeasurableTrivializationOfCocycle} the cocycle $Z$ is measurably trivial. That is, there is a measurable function $\varphi:\mathbb{T}^{d}\to E_{0}^{c}$ such that $Z(\gamma,x) = \varphi(\rho(\gamma)x) - \varphi(x)$ volume almost everywhere. Fix $a\in\Sigma$ such that $\rho(a) = L:\mathbb{T}^{d}\to\mathbb{T}^{d}$ is ergodic. Since $H$ conjugates $\alpha(a)$ to an affine map $Z(a,x)$ is constant. We claim that this constant is $0$. To see this, let
\begin{align}
\psi:\Gamma\to E_{0}^{c},\quad\psi(\gamma) := \int_{\mathbb{T}^{d}}Z(\gamma,x)\intd x.
\end{align}
The fact that $Z$ is a cocycle implies that $\psi$ is a homomorphism, but $E_{0}^{c}$ is a vector space so $\psi = 0$. It follows that $Z(a,x)$ is constant and integrates to $0$, so $Z(a,x) = 0$. That is, $\varphi(Lx) - \varphi(x) = 0$ volume almost everywhere. Since $\varphi$ is $L-$invariant and $L$ is ergodic we conclude that $\varphi$ is constant so $Z(\gamma,x) = 0$ is constant equal to $0$. This finish the proof of the theorem since $Z = 0$ implies that $C = 0$ so $\alpha_{0}$ coincide with $\Hat{\rho}$.
\end{proof}

\section{Smooth rigidity: Non-accessible actions}

We now start the proof of Theorem \ref{MainThm:AbelianCase}. For the remainder of this section let $X_{\Lambda} = \mathbb{T}^{d}\times\mathbb{T}$. By Theorem \ref{Thm:TopologicalRigidity} the action $\alpha$ is topologically conjugated to an affine action $\alpha_{0}:\Gamma\times X_{\Lambda}\to X_{\Lambda}$. For any affine partially hyperbolic diffeomorphism on a torus the stable and unstable distributions are jointly integrable. It follows that there is a (topological) foliation $\mathcal{V}$ tangent to $V = E^{s}\oplus E^{u}$ (where we recall that $E^{\sigma}$ is the invariant distribution for the partially hyperbolic element $\alpha(\gamma_{0}) = f$). The following lemma is well-known.
\begin{lemma}\label{L:SmoothHorizontalFoliation}
The foliation $\mathcal{V}$ is a $C^{\infty}-$foliation.
\end{lemma}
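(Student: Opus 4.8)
The plan is to upgrade the topological foliation $\mathcal{V}$ to a smooth one by identifying the continuous codimension-one distribution $V = E^{s}\oplus E^{u}$ with the kernel of a genuine $C^{\infty}$ closed $1$-form on $\T^{d}\times\T$. Since $\mathcal{V}$ has codimension one and we already have topological integrability, it suffices to show that $V$ is a $C^{\infty}$ distribution; Frobenius then promotes $\mathcal{V}$ to a $C^{\infty}$ foliation. First I would record the purely dynamical input. Suppose $\theta$ is any $C^{\infty}$ $1$-form with $f^{*}\theta=\theta$. For $v\in E^{s}$ and every $n$ we have $\theta_{x}(v)=\theta_{f^{n}x}(D_{x}f^{n}v)$, and since $\norm{D_{x}f^{n}v}\leq C\lambda^{n}\to0$ while $\theta$ is bounded on the compact manifold, we get $\theta_{x}(v)=0$; applying the same argument to $f^{-1}$ on $E^{u}$ (which $f^{-1}$ contracts) gives $\theta|_{E^{u}}=0$. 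Hence every smooth $f$-invariant $1$-form annihilates $V$, and if in addition it is nonvanishing on the line field $E^{c}$ its kernel is \emph{exactly} $V$. So the whole lemma reduces to producing a single smooth, $f$-invariant, nonvanishing $1$-form $\theta$ whose cohomology class is the class $[\mathrm{d}t]$ dual to the center.

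That the class $[\mathrm{d}t]$ is $f^{*}$-invariant is forced by assumption $(iii)$ of \Cref{MainThm:AbelianCase}: since $\alpha_{*}(\gamma_{0})\in\GL(d,\Z)\times1$ is block diagonal and fixes the center direction in $H_{1}(\T^{d+1})$, its transpose fixes $[\mathrm{d}t]$ in $H^{1}$. As a warm-up one also sees that the continuous invariant form is easy to write down: normalizing on the oriented bundle $E^{c}$ using the $f$-invariant center metric $\intd_{c}$ of \Cref{L:InvariantCenterMetric}, invariance of the splitting $TX_{\Lambda}=V\oplus E^{c}$ together with the fact that $f$ acts isometrically and orientation-preservingly along $E^{c}$ yields a \emph{continuous} nonvanishing $\omega$ with $\ker\omega=V$ and $f^{*}\omega=\omega$. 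The entire content of the lemma is therefore to improve this invariant object from continuous to $C^{\infty}$.

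The main obstacle is exactly this transverse (center-direction) regularity. Concretely, one must find a smooth representative $\theta=\mathrm{d}t+\mathrm{d}h$ with $f^{*}\theta=\theta$, i.e. solve the cohomological equation $h\circ f-h=-h_{0}$ smoothly, where $\mathrm{d}h_{0}=f^{*}\mathrm{d}t-\mathrm{d}t$ records the center displacement of $f$. The difficulty is that $f$ is only partially, not fully, hyperbolic, so there is no direct Anosov Livšic theory for the equation on $X_{\Lambda}$. I would resolve this using the fibered structure: because $f$ commutes with the center translation $\eta_{c}$ (\Cref{L:CenterTranslationAction}) and acts isometrically along the smooth compact circle leaves $W^{c}$ (\Cref{Thm:PropertiesOfFibration}), the displacement cocycle is (up to a coboundary) constant along center leaves and hence descends to the smooth hyperbolic base map $L_{su}$ on $\T^{d}$, where the cohomological equation is smoothly solvable with solution as regular as the data by the standard regularity theory for Anosov cohomological equations. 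Once $V=\ker\theta$ is known to be at least Hölder, one bootstraps to $C^{\infty}$ by Journé's lemma, using that $V$ restricts to the tangent distributions of the uniformly smooth leaves $W^{s}$ and $W^{u}$. Either route yields a smooth $\theta$, so $V=\ker\theta$ is a $C^{\infty}$ distribution and $\mathcal{V}$ is a $C^{\infty}$ foliation.
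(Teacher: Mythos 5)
Your proposal is a genuinely different route from the paper's: the paper disposes of this lemma in one line by citing \cite{GogolevKalininSadovskaya2022}*{Lemma 4.1} (see also \cite{KalininSadovskaya2006}*{Lemma 4.1}), whereas you try to build a smooth closed $f$-invariant $1$-form $\theta$ with $\ker\theta=V$. Your preliminary steps are fine: any smooth $f$-invariant $1$-form annihilates $E^{s}$ and $E^{u}$, the class $[\mathrm{d}t]$ is $f^{*}$-fixed by assumption $(iii)$, and the continuous invariant form $\omega$ exists. But the argument does not close at exactly the point you yourself flag as ``the main obstacle,'' namely transverse regularity, and the two devices you invoke to handle it both fail. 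First, the descent to the base is circular: the fibration $\Phi:X_{\Lambda}\to\mathbb{T}^{d}$ collapsing center leaves is, at this stage of the paper, only H\"older (Theorem \ref{Thm:PropertiesOfFibration}$(iv)$) --- its smoothness is Lemma \ref{L:SmoothSubmersion}, which is proved \emph{after} and \emph{using} the present lemma. So even if the displacement data descended to $L_{su}$ and Livšic gave a solution on $\mathbb{T}^{d}$ as regular as that data, the data itself is only H\"older on the base, and pulling the solution back through $\Phi$ returns only a H\"older function $h$ on $X_{\Lambda}$; you recover the continuous $\omega$ you already had, not a smooth $\theta$. (The intermediate claim that the displacement cocycle is smoothly cohomologous to one constant along center leaves is also unjustified; the needed coboundary correction is itself a center-direction regularity statement, and on periodic center circles, where $f$ acts as a rotation, the relevant circle cohomological equation need not even be smoothly solvable.)

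Second, the Journ\'e bootstrap cannot supply the missing center-direction regularity: $W^{s}$ and $W^{u}$ have complementary dimensions only inside the leaves of $\mathcal{V}$ ($d_{s}+d_{u}=d<d+1$), so Journ\'e applied to this pair yields at best that $h$, hence $\theta$ and $V$, are uniformly $C^{\infty}$ \emph{along the leaves of} $\mathcal{V}$. That gives a foliation with uniformly smooth leaves --- which is not in dispute --- but says nothing about smoothness of the distribution $V$ in the direction of $E^{c}$, which is precisely what ``$\mathcal{V}$ is a $C^{\infty}$ foliation'' means here. (Moreover $E^{u}$ is in general only H\"older along $W^{s}$-leaves and vice versa, so even leafwise smoothness of $V$ needs the Liv\v{s}ic-type series argument rather than the bare statement that $V$ ``restricts to the tangent distributions of the smooth leaves.'') To repair the argument you would need an independent input giving regularity transverse to $\mathcal{V}$ --- e.g.\ the normal-forms/holonomy argument of \cite{GogolevKalininSadovskaya2022}*{Lemma 4.1}, which is what the paper actually invokes.
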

\begin{proof}
By \cite{GogolevKalininSadovskaya2022}*{Lemma 4.1}, and the discussion after the lemma, $\mathcal{V}$ is a $C^{\infty}-$foliation (see also \cite{KalininSadovskaya2006}*{Lemma 4.1}).
\end{proof}
\begin{lemma}\label{L:GlobalProductStructure}
For any $x,y\in X_{\Lambda}$ we have $\#\mathcal{V}(x)\cap W^{c}(y) = 1$. In particular, $\mathcal{V}$ have compact leaves
\end{lemma}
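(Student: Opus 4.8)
The plan is to reduce the whole statement to linear algebra on the torus. By \Cref{Thm:TopologicalRigidity} we may replace $\alpha$ by the affine model $\alpha_{0}$, so that $W^{c}$ and $\mathcal{V}$ become the \emph{linear} foliations of $\mathbb{T}^{d+1} = \mathbb{T}^{d}\times\mathbb{T}$ by cosets of $\exp(E^{c})$ and $\exp(V)$, where $E^{c}\oplus V = \mathbb{R}^{d+1}$ is the partially hyperbolic splitting of the linear part $A = \hat\rho(\gamma_{0})\in\mathrm{GL}(d+1,\mathbb{Z})$ of $f_{0} = \alpha_{0}(\gamma_{0})$. Since both foliations are translates of fixed subtori, the count $\#(\mathcal{V}(x)\cap W^{c}(y))$ is independent of $x,y$ and, whenever the intersection is nonempty, equals the cardinality of $\exp(V)\cap\exp(E^{c})$; and it is always nonempty because $V + E^{c} = \mathbb{R}^{d+1}$ forces $\exp(V) + \exp(E^{c}) = \mathbb{T}^{d+1}$. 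Thus everything reduces to proving $\exp(V)\cap\exp(E^{c}) = \{0\}$.

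First I would record that $V$ and $E^{c}$ are rational subspaces. Since $\pi_{*}A = L_{su}\pi_{*}$ with $L_{su}$ hyperbolic, $A$ is block lower triangular with diagonal blocks $L_{su}$ and $m = \pm1$; hence $E^{c} = \ker(A-mI) = \{0\}\times\mathbb{R}$ is the vertical line and $V = \mathrm{Im}(A - mI) = \ker\chi_{L_{su}}(A)$ is a rational, $A$-invariant hyperplane complementary to $E^{c}$ (here $\chi_{L_{su}}(m)\neq0$ because $m=\pm1$ is not an eigenvalue of the hyperbolic $L_{su}$). Therefore $\exp(E^{c})$ is the vertical circle, consistent with the compact center leaves of \Cref{Thm:PropertiesOfFibration}, and $\exp(V)$ is a $d$-dimensional subtorus, so $\pi|_{\exp(V)}\colon\exp(V)\to\mathbb{T}^{d}$ is an isogeny whose degree is exactly $\#(\mathcal{V}(x)\cap W^{c}(y))$.

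The crux is that this degree is one, equivalently that the integer lattice splits as $\mathbb{Z}^{d+1} = (V\cap\mathbb{Z}^{d+1})\oplus(E^{c}\cap\mathbb{Z}^{d+1})$. Here I would feed in hypothesis (iii) through the conjugacy $H$. Because $\pi\circ H = \Phi$ with $\Phi$ homotopic to $\pi$, while $H(\eta_{c}(s)x) = H(x)e^{sZ}$ sends center circles to vertical circles, the induced automorphism $H_{*}\in\mathrm{GL}(d+1,\mathbb{Z})$ preserves the vertical line: $H_{*}e_{d+1} = \pm e_{d+1}$. Consequently $H_{*} = \left(\begin{smallmatrix} P & 0\\ q^{T} & \pm1\end{smallmatrix}\right)$ with $P\in\mathrm{GL}(d,\mathbb{Z})$, and since $\hat\rho = H_{*}\alpha_{*}H_{*}^{-1}$ with $\alpha_{*}$ valued in $\mathrm{GL}(d,\mathbb{Z})\times 1$ by (iii), the hyperbolic space is $V = H_{*}(\mathbb{R}^{d}\times 0)$ and $E^{c} = H_{*}(\{0\}\times\mathbb{R})$. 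Applying the lattice automorphism $H_{*}$ to the tautological splitting $\mathbb{Z}^{d+1} = (\mathbb{Z}^{d}\times 0)\oplus\mathbb{Z} e_{d+1}$ then yields $\mathbb{Z}^{d+1} = (V\cap\mathbb{Z}^{d+1})\oplus(E^{c}\cap\mathbb{Z}^{d+1})$, whence $\exp(V)\cap\exp(E^{c}) = \{0\}$ and the intersection number is $1$. Compactness of the leaves of $\mathcal{V}$ is then immediate, since every leaf is a translate of the closed subtorus $\exp(V)$.

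I expect the main obstacle to be precisely this degree computation: a priori a leaf tangent to $V = E^{s}\oplus E^{u}$ could wind nontrivially over the base $\mathbb{T}^{d}$ and meet a center circle in several points, and no purely dynamical input rules this out. The step where the argument really uses the higher-rank conclusion (through condition (iii) and the integrality of $H_{*}$ on $\pi_{1}$) is exactly in forcing the lattice to split, that is, in forcing $\pi|_{\exp(V)}$ to have degree one rather than merely finite degree.
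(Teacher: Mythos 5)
Your proposal is correct and follows the same route as the paper: both reduce the statement to the affine model $\alpha_{0}$ via \Cref{Thm:TopologicalRigidity}, where the paper's entire proof is the assertion that the lemma holds for $\alpha_{0}$ and hence for $\alpha$. Your additional linear-algebra verification --- that $E^{c}$ and $V$ are complementary rational subspaces whose integer points split $\mathbb{Z}^{d+1}$, so that $\exp(V)$ is a closed subtorus meeting each vertical circle exactly once --- is precisely the content the paper leaves implicit, and it correctly handles the point (which the paper glosses over) that the linear part of $\alpha_{0}$ is a priori only ${\rm GL}(d+1,\mathbb{Z})$-conjugate to an element of ${\rm GL}(d,\mathbb{Z})\times 1$ rather than literally equal to one.
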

\begin{proof}
By Theorem \ref{Thm:TopologicalRigidity} the action $\alpha$ is topologically conjugated to an affine action $\alpha_{0}$ with linear part taking values in ${\rm GL}(d,\mathbb{Z})\times 1$. Since the lemma holds for $\alpha_{0}$, the lemma also holds for $\alpha$.
\end{proof}
\begin{lemma}
The action $\alpha$ preserve $\mathcal{V}$.
\end{lemma}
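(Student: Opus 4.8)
The plan is to transport the statement to the affine model $\alpha_{0}$ through the topological conjugacy $H$ of \Cref{Thm:TopologicalRigidity}, where the corresponding foliation is visibly invariant, and then to pull this invariance back to $\alpha$.

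First I would describe $\mathcal{V}$ intrinsically as the joint stable--unstable foliation of $f = \alpha(\gamma_{0})$: each leaf $\mathcal{V}(x)$ is the union of the leaves $W^{u}(z)$ over $z\in W^{s}(x)$, which is meaningful precisely because $E^{s}\oplus E^{u}$ is jointly integrable (this is how $\mathcal{V}$ was produced). Let $f_{0} = \alpha_{0}(\gamma_{0})$ and let $\mathcal{V}_{0}$ be the analogous joint foliation of $f_{0}$. Since $H f H^{-1} = f_{0}$ and the stable and unstable leaves are characterised by forward and backward asymptotics, $\{y : \intd(f^{n}x,f^{n}y)\to 0\}$ and $\{y : \intd(f^{-n}x,f^{-n}y)\to 0\}$ respectively, a characterisation preserved by the homeomorphism $H$, the conjugacy carries $W^{s}$ to $W^{s}_{0}$ and $W^{u}$ to $W^{u}_{0}$. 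Here the asymptotic characterisation is valid because the center is isometric by \Cref{L:InvariantCenterMetric}, so no points off the stable leaf converge forward. Saturating then gives $H(\mathcal{V}(x)) = \mathcal{V}_{0}(H(x))$, i.e. $\mathcal{V} = H^{-1}(\mathcal{V}_{0})$ as foliations.

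Second I would check directly that $\alpha_{0}$ preserves $\mathcal{V}_{0}$. In the model $X_{\Lambda} = \mathbb{T}^{d}\times\mathbb{T}$ the element $f_{0}$ has $E_{0}^{s}\oplus E_{0}^{u} = T\mathbb{T}^{d}$ and center $T\mathbb{T}$, so $\mathcal{V}_{0}$ is the foliation by the horizontal tori $\mathbb{T}^{d}\times\{t\}$. By assumption $(iii)$ and \Cref{Thm:TopologicalRigidity} each $\alpha_{0}(\gamma)$ is affine with linear part in ${\rm GL}(d,\mathbb{Z})\times 1$, hence of the form $(x,t)\mapsto(A_{\gamma}x + b_{\gamma},\, t + c_{\gamma})$, which sends $\mathbb{T}^{d}\times\{t\}$ to $\mathbb{T}^{d}\times\{t + c_{\gamma}\}$. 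Thus $\alpha_{0}(\gamma)\mathcal{V}_{0}(y) = \mathcal{V}_{0}(\alpha_{0}(\gamma)y)$ for all $\gamma$ and $y$.

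Combining the two steps finishes the proof: for every $\gamma\in\Gamma$ and $x\in X_{\Lambda}$,
\[
\alpha(\gamma)\mathcal{V}(x) = H^{-1}\alpha_{0}(\gamma)H\,\mathcal{V}(x) = H^{-1}\alpha_{0}(\gamma)\mathcal{V}_{0}(Hx) = H^{-1}\mathcal{V}_{0}(\alpha_{0}(\gamma)Hx) = \mathcal{V}(\alpha(\gamma)x),
\]
using $H\alpha(\gamma) = \alpha_{0}(\gamma)H$. I do not expect a genuine obstacle here; the only step that needs care is the identity $H(\mathcal{V}) = \mathcal{V}_{0}$, which rests on joint integrability (present in the model and transported by $H$) together with the standard invariance of the stable and unstable foliations under a topological conjugacy. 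Everything else is a formal manipulation of the conjugacy relation.
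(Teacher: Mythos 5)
Your argument is correct and follows exactly the route the paper takes: the paper's entire proof is ``This is immediate from Theorem \ref{Thm:TopologicalRigidity},'' and your write-up simply fills in the implicit details (transporting $W^{s}$, $W^{u}$, and hence $\mathcal{V}$, through the conjugacy $H$ and checking invariance in the affine model). The only point deserving the care you already gave it is that the asymptotic characterisation of stable/unstable leaves identifies $H(\mathcal{V})$ with $\mathcal{V}_{0}$, which holds because the center is isometric.
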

\begin{proof}
This is immediate from Theorem \ref{Thm:TopologicalRigidity}.
\end{proof}
\begin{lemma}\label{L:SmoothSubmersion}
The projection map $\Phi:X_{\Lambda}\to\mathbb{T}^{d}$ is a $C^{\infty}-$submersion.
\end{lemma}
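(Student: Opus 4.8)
The plan is to promote the a priori only Hölder map $\Phi$ to a $C^{\infty}$ map by proving regularity along the three invariant foliations of $f=\alpha(\gamma_{0})$ and then gluing with the Journé lemma; the submersion property is read off afterwards from the behaviour of $\Phi$ along the leaves of $\mathcal{V}$. Recall first that $\Phi$ collapses center leaves, $\Phi^{-1}(\Phi(x))=W^{c}(x)$ by Theorem~\ref{Thm:PropertiesOfFibration}, so $\Phi$ is constant, and in particular $C^{\infty}$, along $W^{c}$. The global product structure of Lemma~\ref{L:GlobalProductStructure} shows that for each $x$ the restriction $\Phi|_{\mathcal{V}(x)}\colon\mathcal{V}(x)\to\mathbb{T}^{d}$ is a continuous bijection: it is injective because two points of $\mathcal{V}(x)$ with equal $\Phi$-image lie on a common center leaf and $\#\mathcal{V}(x)\cap W^{c}(y)=1$, and surjective because every center leaf meets $\mathcal{V}(x)$; since $\mathcal{V}(x)$ is compact this restriction is a homeomorphism onto $\mathbb{T}^{d}$.

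The heart of the argument is to show that $\Phi$ is $C^{\infty}$ along the stable and unstable foliations $W^{s},W^{u}$ of $f$. By Theorem~\ref{Thm:PropertiesOfFibration} the map $\Phi$ satisfies $\Phi\circ f=L_{su}\circ\Phi$ with $L_{su}$ hyperbolic, and it carries $W^{s}$ (respectively $W^{u}$) onto the linear stable (respectively unstable) foliation of $L_{su}$; along these leaves $\Phi$ is bi-Hölder by \cite{Sandfeldt2024}*{Lemma 3.3}. Restricted to a single stable leaf, the intertwining relation presents $\Phi|_{W^{s}}$ as a topological conjugacy between the $C^{\infty}$ contraction $f|_{W^{s}}$ and the linear contraction $L_{su}|_{E^{s}}$; the standard smooth linearization of smooth contractions onto their linear models (and the symmetric statement along $W^{u}$) upgrades $\Phi$ to a map that is uniformly $C^{\infty}$ along the leaves of $W^{s}$ and of $W^{u}$. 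Now $W^{s},W^{u},W^{c}$ are pairwise transverse continuous foliations with uniformly $C^{\infty}$ leaves whose tangent bundles span $TX_{\Lambda}=E^{s}\oplus E^{c}\oplus E^{u}$, and $\Phi$ is uniformly $C^{\infty}$ along each of them. Applying the Journé lemma first to the pair $W^{s},W^{u}$ (whose joint integral is $\mathcal{V}$) and then to the pair $\mathcal{V},W^{c}$ yields $\Phi\in C^{\infty}(X_{\Lambda},\mathbb{T}^{d})$.

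It remains to check that $\Phi$ is a submersion, i.e. that $D\Phi$ is everywhere surjective. Since $\Phi$ is constant along $W^{c}$ we have $E^{c}=TW^{c}\subset\ker D\Phi$. Differentiating $\Phi\circ f=L_{su}\circ\Phi$ gives $D\Phi\circ Df=L_{su}\circ D\Phi$, so $\ker D\Phi$ is a $Df$-invariant distribution. On the complementary bundle $V=E^{s}\oplus E^{u}=T\mathcal{V}$ the now-smooth restriction $\Phi|_{\mathcal{V}(x)}$ is a homeomorphism conjugating $f$ to the hyperbolic automorphism $L_{su}$, and such a smooth conjugacy to a hyperbolic linear model has nonsingular leafwise differential; equivalently, $\ker D\Phi$ cannot meet $E^{s}$ or $E^{u}$ without contradicting injectivity of $\Phi$ along $W^{s},W^{u}$. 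Hence $D\Phi|_{V}$ is an isomorphism onto $T\mathbb{T}^{d}$, $D\Phi$ is surjective, and $\Phi$ is a $C^{\infty}$-submersion whose restriction to each $\mathcal{V}(x)$ is a diffeomorphism.

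The step I expect to be the main obstacle is the promotion of $\Phi$ from Hölder to leafwise $C^{\infty}$ along $W^{s}$ and $W^{u}$: this is where the hyperbolicity of $L_{su}$ and the smoothness of $f$ must genuinely be exploited, via a smooth linearization / conjugacy-of-contractions argument, before the Journé lemma can assemble the leafwise regularity into smoothness of $\Phi$ on all of $X_{\Lambda}$. Once this is in hand, both the gluing and the submersion property are routine.
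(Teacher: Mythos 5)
The decisive step of your argument --- upgrading $\Phi$ from H\"older to $C^{\infty}$ along $W^{s}$ and $W^{u}$ by a ``smooth linearization of contractions'' --- is precisely where the proof breaks down, and you have correctly flagged it as the main obstacle. The relation $\Phi\circ f=L_{su}\circ\Phi$ presents $\Phi|_{W^{s}(x)}$ as \emph{a} topological conjugacy between the nonlinear contraction and its linear model, but (non-stationary) linearization theorems only assert the \emph{existence} of some smooth conjugacy; they say nothing about the regularity of the particular map $\Phi$, which is pinned down by the global conditions $\Phi^{-1}(\Phi(x))=W^{c}(x)$ and the homotopy class, not by any normal-form construction. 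Conjugacies onto a linear contraction are far from unique (one may post-compose with any homeomorphism commuting with $L_{su}|_{E^{s}_{0}}$, most of which are nowhere differentiable), so nothing forces $\Phi|_{W^{s}}$ to be smooth. Indeed, if your leafwise argument were valid it would apply to the single diffeomorphism $f$ in isolation and would show that every Franks--Manning-type semiconjugacy is smooth along stable and unstable leaves, which is false: for a generic single Anosov or partially hyperbolic map in this homotopy class, $\Phi$ is only H\"older along $W^{s}$ and $W^{u}$. The smoothness of $\Phi$ has to come from the higher rank lattice action, which your argument never uses after the first paragraph.

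The paper's route is genuinely different at this point. Using Lemma \ref{L:GlobalProductStructure} and the virtual triviality of lattice actions on the circle, each leaf $\mathcal{V}(x)$ is $\alpha$-invariant and carries a higher rank \emph{Anosov} lattice action, so the global rigidity theorem of \cite{BrownRodriguez-HertzWang2017} applies leafwise and shows that each $\Phi_{x}=\Phi|_{\mathcal{V}(x)}$ is $C^{\infty}$ (in fact a diffeomorphism onto $\mathbb{T}^{d}$, which also settles the submersion property; your closing claim that injectivity of $\Phi$ along $W^{s},W^{u}$ forces $\ker D\Phi\cap(E^{s}\oplus E^{u})=0$ is not valid for smooth maps in general, as $t\mapsto t^{3}$ shows). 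One must then still prove that $x\mapsto\Phi_{x}$ is continuous in the $C^{\infty}$ topology before Journ\'e's lemma can be invoked; the paper does this by identifying $\Phi_{y}$ with $H_{x,y}\Phi_{x}\pi^{\mathcal{F}}_{y,x}$, where $H_{x,y}$ is the conjugacy furnished by Fisher--Margulis local rigidity applied to the perturbed actions $\beta_{x,y}$, together with uniqueness of conjugacies near the identity. Both ingredients --- leafwise global rigidity and transverse local rigidity --- are absent from your proposal and cannot be replaced by a linearization argument.
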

\begin{proof}
By Lemma \ref{L:GlobalProductStructure} we have $\#\mathcal{V}(x)\cap W^{c}(y) = 1$ for all $x,y\in X_{\Lambda}$. Since $\Gamma$ is a higher rank lattice the action of $\Gamma$ on a circle is virtually trivial \cites{DeroinHurtado,Navas}. So, after possibly dropping to a finite index subgroup, we may assume that $\alpha$ fixes all leaves of $\mathcal{V}$.

For every $x\in X_{\Lambda}$ the action $\alpha_{x}:\Gamma\times\mathcal{V}(x)\to\mathcal{V}(x)$ is a higher rank Anosov action (since the partially hyperbolic element $f = \alpha(\gamma_{0})$ restrict to an Anosov diffeomorphism on $\mathcal{V}(x)$). By \cite{BrownRodriguez-HertzWang2017} the map $\Phi|_{\mathcal{V}(x)} = \Phi_{x}:\mathcal{V}(x)\to\mathbb{T}^{d}$ is $C^{\infty}$. Let $x_{0}\in X_{\Lambda}$ and fix a smooth transverse foliation $\mathcal{F}$ to $\mathcal{V}$ in $U = \mathcal{V}(W_{\rm loc}^{c}(x_{0}))$. Let $\pi_{x,y}^{\mathcal{F}}:\mathcal{V}(x)\to\mathcal{V}(y)$ be the $\mathcal{F}-$holonomy map between $\mathcal{V}(x)$ and $\mathcal{V}(y)$ (with $x,y\in U$ sufficiently close). Since the transverse $\mathcal{F}$ is $C^{\infty}$, the maps $\pi_{x,y}^{\mathcal{F}}$ are uniformly $C^{\infty}$ in $x$ and $y$. Let $x,y\in U$ be close and define an action
\begin{align}
\beta_{x,y}(\gamma) = \Phi_{x}\pi_{y,x}^{\mathcal{F}}\alpha(\gamma)\pi_{x,y}^{\mathcal{F}}\Phi_{x}^{-1}:\mathbb{T}^{d}\to\mathbb{T}^{d}.
\end{align}
Note that as $y\to x$ we have that $\beta_{x,y}(\gamma)\to\rho(\gamma)$ in the $C^{\infty}-$topology. From \cite{FisherMargulis2009}, if $x$ and $y$ are close then we obtain a $C^{\infty}-$conjugacy $H_{x,y}:\mathbb{T}^{d}\to\mathbb{T}^{d}$ such that $H_{x,y}\beta_{x,y}(\gamma) = \rho(\gamma)H_{x,y}$. Moreover, the conjugacy $H_{x,y}$ tends to ${\rm id}_{\mathbb{T}^{d}}$ in the $C^{\infty}-$topology as $y\to x$ since $\beta_{x,y}\to\rho$. On the other hand, we can also define
\begin{align}
\Tilde{H}_{x,y} = \Phi_{y}\pi_{x,y}^{\mathcal{F}}\Phi_{x}^{-1}
\end{align}
which satisfy
\begin{align}
\Tilde{H}_{x,y}\beta_{x,y}(\gamma) = \Phi_{y}\alpha(\gamma)\pi_{x,y}^{\mathcal{F}}\Phi_{x}^{-1} = \rho(\gamma)\Phi_{y}\pi_{x,y}^{\mathcal{F}}\Phi_{x}^{-1} = \rho(\gamma)\Tilde{H}_{x,y}.
\end{align}
Since the conjugacy $H_{x,y}$ is unique among all homeomorphisms close to identity (in the $C^{0}-$topology) and since $\Tilde{H}_{x,y}$ is close to identity when $x$ and $y$ are close we have $H_{x,y} = \Tilde{H}_{x,y}$. That is, we can write
\begin{align}
\Phi_{y} = H_{x,y}\Phi_{x}\pi_{y,x}^{\mathcal{F}}
\end{align}
which, since $H_{x,y}\to{\rm id}$ in $C^{\infty}$ and $\pi_{x,y}^{\mathcal{F}}$ is uniformly $C^{\infty}$, implies that $\Phi_{y}\to\Phi_{x}$ in the $C^{\infty}-$topology as $y\to x$. It follows that $\Phi:X_{\Lambda}\to\mathbb{T}^{d}$ is uniformly $C^{\infty}$ along $\mathcal{V}$. Since $\Phi$ is constant along $W^{c}$ it follows by Journé's lemma \cite{Journe1988} that $\Phi$ is a $C^{\infty}-$submersion.
\end{proof}
We are now ready to prove Theorem \ref{MainThm:AbelianCase}.
\begin{proof}[Proof of Theorem \ref{MainThm:AbelianCase}]
Fix a $\alpha-$invariant center leaf $W^{c}(x_{0})$. Let $H:X_{\Lambda}\to\mathbb{T}^{d}\times W^{c}(x_{0})$ be defined by
\begin{align}
H(x) := \left(\Phi(x),\pi^{\mathcal{V}}(x)\right)
\end{align}
where $\pi^{\mathcal{V}}$ is defined as the unique intersection between $\mathcal{V}(x)$ and $W^{c}(x_{0})$ (Lemma \ref{L:GlobalProductStructure}). Since $\mathcal{V}$ is a $C^{\infty}-$foliation (Lemma \ref{L:SmoothHorizontalFoliation}) the map $\pi^{\mathcal{V}}:X_{\Lambda}\to W^{c}(x_{0})$ is a $C^{\infty}-$submersion. By Lemma \ref{L:SmoothSubmersion} the map $\Phi$ is $C^{\infty}-$submersion. Combined it follows that $H$ is a diffeomorphism.

For any $\gamma\in\Gamma$ we have
\begin{align}
H(\alpha(\gamma)x) = \left(\Phi(\alpha(\gamma)x),\pi^{\mathcal{V}}(\alpha(\gamma)x)\right) = \left(\rho(\gamma)\Phi(x),\alpha(\gamma)\pi^{\mathcal{V}}(x)\right)
\end{align}
since $\alpha$ preserve $\mathcal{V}$. The action of $\alpha$ on $W^{c}(x_{0})$ is finite \cites{DeroinHurtado,Navas} and we can therefore identify $W^{c}(x_{0})$ with $\mathbb{T}$ such that $\alpha(\gamma)$ is given by translations. After this identification $H$ conjugates $\alpha$ to an affine action.
\end{proof}

\section{Smooth Rigidity: Accessible actions}\label{Sec:SmoothConjugacyAccessible}

In this section we prove Theorem \ref{MainThm:NonAbelianCase}. The main idea is similar to \cite{BrownRodriguez-HertzWang2017} but we need to overcome the issue that we do not have hyperbolic matrices in the representation on the fundamental group since the action contain no Anosov diffeomorphism.

Firstly, we may assume that the conclusion of \Cref{L:RestrictToAlgebraicGroup} holds from now on. That is,
$\mathbf{G}$ is an algebraically simply connected, algebraic group defined over $\mathbb{R}$. Assume that all $\mathbb{R}-$simple factors have $\mathbb{R}-$rank $2$ or higher. Let $\Gamma$ be a lattice in $G=\mathbf{G}(\mathbb{R})^{\circ}$. 

In this setting, we can apply the following version of Zimmer's cocycle superrigidity theorem:

\begin{theorem}[Zimmer's cocycle superrigidity, \cite{MR2039990}]\label{thm:ZCSR}
 Let $(X,\mu)$ be a ergodic $\Gamma$ Lebesgue space. Assume that $\mu$ is $\Gamma$ invariant. Let $\beta:\Gamma\times X\to{\rm GL}(n,\mathbb{R})$ be a measurable cocycle over the $\Gamma$ action on $X$. Further assume that \begin{equation}\tag{L1}\label{L1int}\beta(\gamma,\cdot)\in L^{1}(X,\mu)\end{equation} for all $\gamma\in \Gamma$. 
    
    Then, there exists
    \begin{enumerate}
        \item a measurable map $\phi:X\to{\rm GL}(n,\mathbb{R})$,
        \item a continuous representaiton $\pi: G\to{\rm SL}(n,\mathbb{R})$,
        \item a compact subgroup $K$ in ${\rm GL}(n,\mathbb{R})$ that commutes with $\pi(G)$, and
        \item a measurable cocycle $\kappa:\Gamma\times X\to K$
    \end{enumerate}
    such that 
    \[\beta(\gamma,x)=\phi(\gamma.x)\pi(\gamma)\kappa(\gamma,x)\phi(x)^{-1}\] for all $\gamma\in \Gamma$ and $\mu$ almost every $x\in X$.
\end{theorem}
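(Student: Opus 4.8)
The plan is to follow Zimmer's route to cocycle superrigidity, which rests on two pillars: \textbf{induction} from $\Gamma$ to $G$, and \textbf{boundary theory} for the higher rank group $G$. First I would induce $\beta$ to a cocycle over $G$ by the suspension construction (as in the Suspension example above): form $Y := G/\Gamma \times_{\Rcal} X$ with its return cocycle $\Rcal$, equip $Y$ with the finite $G$-invariant measure ${\rm vol}_{G/\Gamma}\otimes\mu$, and set $\hat\beta(g,(y,x)) := \beta(\Rcal(g,y),x)$. Since $\mu$ is $\Gamma$-ergodic the induced $G$-action on $Y$ is ergodic, and the integrability hypothesis \eqref{L1int} passes to $\hat\beta$. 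A measurable trivialization of $\hat\beta$ of the asserted algebraic form restricts, along $\{e\Gamma\}\times X$, to one for $\beta$; hence it suffices to prove the statement for the $G$-cocycle $\hat\beta$.

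Next I would pass to the \textbf{algebraic hull}. Up to measurable cohomology there is a smallest real algebraic subgroup $L\leq{\rm GL}(n,\mathbb{R})$, unique up to conjugacy, into which $\hat\beta$ can be conjugated; replacing $\hat\beta$ by such a representative, $L$ becomes its algebraic hull. Because the $G$-action on $Y$ carries a finite invariant measure, an averaging argument—trivializing the unipotent radical of $L$ against that invariant measure—forces $L$ to be \textbf{reductive}. Reductivity is what guarantees that the subsequent boundary analysis lands in a flag variety of $L$ rather than an arbitrary homogeneous space.

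The rigidity core comes next. Let $P\leq G$ be a minimal parabolic and $B=G/P$ the Furstenberg boundary; since $P$ is amenable the $G$-action on $B$ is \textbf{amenable}, which produces a $\hat\beta$-equivariant measurable map $\psi:Y\times B\to L/Q$ into a flag variety of $L$ (a minimal $L$-factor of the space of probability measures on which $L$ acts). The decisive step, which uses $\mathbb{R}$-rank at least $2$ on every simple factor, is to show that $\psi$ is essentially independent of the $Y$-variable and rational in $B$: arguing as in Margulis' proof of superrigidity, one exploits the Bruhat decomposition and the abundance of commuting higher rank unipotent and Weyl-chamber flows to match $\psi$ almost everywhere with a rational $G$-map $B\to L/Q$, intertwined through an algebraic homomorphism $G\to L$. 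This algebraicity of the boundary map is \textbf{the main obstacle}, and it is precisely where rank one breaks down. Since $G$ is semisimple, hence perfect, the resulting continuous homomorphism automatically takes values in ${\rm SL}(n,\mathbb{R})$; call it $\pi$.

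Finally I would identify the leftover. Having straightened $\hat\beta$ so that it agrees with $\pi$ modulo a cocycle valued in the centralizer $Z_{L}(\pi(G))$, I would apply the reductivity-plus-finite-measure argument once more to this residual cocycle to conclude that its algebraic hull is \textbf{compact}, producing the compact group $K$ commuting with $\pi(G)$ and the compact-valued cocycle $\kappa$. Unwinding the induction then returns the measurable transfer map $\phi:X\to{\rm GL}(n,\mathbb{R})$ together with the identity $\beta(\gamma,x)=\phi(\gamma.x)\,\pi(\gamma)\,\kappa(\gamma,x)\,\phi(x)^{-1}$ for all $\gamma\in\Gamma$ and $\mu$-almost every $x$. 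The genuinely hard step throughout is the algebraicity of the boundary map, where higher rank must be used in full; the integrability assumption \eqref{L1int} enters precisely to make the averaging steps behind reductivity and the final compact reduction converge.
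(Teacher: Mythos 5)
You should first note that the paper does not prove this statement at all: Theorem \ref{thm:ZCSR} is imported verbatim as a black box from \cite{MR2039990}, so there is no internal argument to compare your proposal against. Judged on its own, your outline has the right overall shape --- induction to $G$ via the suspension, passage to the algebraic hull, an amenability argument on the Furstenberg boundary $G/P$ producing an equivariant map to a flag variety of the hull, a Margulis-style rationality argument using $\mathbb{R}$-rank $\geq 2$ on every simple factor, and a final compact residue --- and this is indeed the standard Zimmer/Fisher--Margulis strategy.

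There is, however, one concrete gap. Your claim that the finiteness of the invariant measure alone, via ``an averaging argument,'' forces the algebraic hull $L$ to be reductive is not correct, and the same issue recurs in your last step where you assert the residual hull is compact. A cocycle over a finite-measure-preserving ergodic action can perfectly well have unipotent (or noncompact abelian) algebraic hull in general: averaging a putative coboundary equation against the invariant measure does not trivialize a cocycle into the unipotent radical. What actually kills the unipotent radical and the noncompact part of the center of $L$ is Kazhdan's property (T) of $G$ (equivalently, the vanishing of the relevant measurable cohomology with vector-space coefficients, as in Theorem \ref{Thm:MeasurableTrivializationOfCocycle} of this paper); this is a second, independent use of the higher-rank hypothesis that your sketch omits entirely. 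Two smaller points: the amenability argument first lands in the space of probability measures on a projective/flag variety, and one needs the structure theorem for stabilizers of such measures (algebraic extended by compact) to descend to $L/Q$; and in \cite{MR2039990} the integrability hypothesis \eqref{L1int} is tied to the quantitative straightening of the cocycle rather than to the reductivity step, so your closing sentence misattributes its role.
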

Note that since we will apply \Cref{thm:ZCSR} to derivative cocycle over a smooth $\Gamma$ action on a compact manifold, the integrability condition \cref{L1int} will always be satisfied.

We now start the proof of Theorem \ref{MainThm:NonAbelianCase} (following \cite{BrownRodriguez-HertzWang2017}). The idea is to produce a higher rank subgroup $\mathbb{Z}^{k}\cong\Sigma\leq\Gamma$ with $\gamma\in\Sigma$ such that $\alpha(\gamma)$ is partially hyperbolic with $1-$dimensional center. Theorem \ref{MainThm:NonAbelianCase} then follows from Theorem \ref{Thm:TopologicalRigidity} and \cite{Sandfeldt2024}*{Theorem 1.2}. To produce a higher rank (abelian group) action with a partially hyperbolic element we follow \cite{BrownRodriguez-HertzWang2017}*{Section 8}.
\begin{lemma}\label{L:DefinitionOfZariskiDenseSet}
Let $\pi:\Gamma\to1\times{\rm GL}(d,\mathbb{R})\subset{\rm GL}(d+1,\mathbb{R})$ be a representation such that $\pi(\gamma_{0})$ is partially hyperbolic with $1-$dimensional isometric center and let
\begin{align}
W_{\pi} := \{w\in G\text{ : }\pi(w)E_{\gamma_{0}}^{s,\pi}\cap E_{\gamma_{0}}^{u,\pi} = E_{\gamma_{0}}^{s,\pi}\cap \pi(w)E_{\gamma_{0}}^{u,\pi} = 0\}.
\end{align}
The set
\begin{align}
W := \bigcap_{\pi:\Gamma\to1\times{\rm GL}(d,\mathbb{R})}W_{\pi}
\end{align}
is Zariski open and Zariski dense.
\end{lemma}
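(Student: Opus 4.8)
The plan is to turn the \emph{infinite} intersection $W=\bigcap_{\pi}W_\pi$ into a \emph{finite} one. Concretely, I would establish three things: (a) each individual $W_\pi$ is Zariski open; (b) each $W_\pi$ is nonempty, hence Zariski dense since $\mathbf{G}$ is connected and $G=\mathbf{G}(\mathbb{R})^\circ$ is Zariski dense in it; and (c) the collection $\{W_\pi\}$ is actually finite, so that $W$ is a finite intersection of Zariski open dense sets and therefore again Zariski open and dense. The work is almost entirely in (c); (a) and (b) are a routine transversality-plus-connectedness argument.

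For (a) and (b), the first point is that ``$\pi(w)$ for $w\in G$'' must be interpreted via the superrigid extension. By Margulis superrigidity each relevant $\pi$ agrees on $\Gamma$ with $\sigma\cdot\kappa$, where $\sigma$ is the restriction of a rational representation of $\mathbf{G}$ and $\kappa$ is precompact and commutes with $\sigma$; the symbol $\pi(w)$ then refers to $\sigma(w)$, whose matrix entries are regular functions on $\mathbf{G}$. Since $\pi$ takes values in $1\times\mathrm{GL}(d,\mathbb{R})$ with the $1$-dimensional isometric center sitting in the $1$-factor, the hyperbolic part of $\pi(\gamma_0)$ acts on $\mathbb{R}^{d}$ with $E_{\gamma_0}^{s,\pi}\oplus E_{\gamma_0}^{u,\pi}=\mathbb{R}^{d}$; in particular $\dim E_{\gamma_0}^{s,\pi}+\dim E_{\gamma_0}^{u,\pi}=d$. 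Thus each condition defining $W_\pi$ is a complementarity condition between two subspaces of complementary dimension, detected by a single determinant: fixing matrices $B_s,B_u$ whose columns are bases of $E_{\gamma_0}^{s,\pi}$ and $E_{\gamma_0}^{u,\pi}$, set
\begin{align}
f_1(w)=\det[\sigma(w)B_s,\,B_u],\qquad f_2(w)=\det[B_s,\,\sigma(w)B_u].
\end{align}
These are regular functions on $\mathbf{G}$ and $W_\pi=\{f_1\neq0\}\cap\{f_2\neq0\}$ is Zariski open. Nonemptiness follows by evaluating at $w=e$: there $\sigma(e)=\mathrm{Id}$ and $E_{\gamma_0}^{s,\pi}\cap E_{\gamma_0}^{u,\pi}=0$ by partial hyperbolicity, so $f_1(e),f_2(e)\neq0$ and $e\in W_\pi$; connectedness then gives Zariski density.

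The principal obstacle is (c). I would first note that $W_\pi$ depends only on the conjugacy class of $\pi$: if $\pi'=g\pi g^{-1}$ with $g\in 1\times\mathrm{GL}(d,\mathbb{R})$, then $E_{\gamma_0}^{\bullet,\pi'}=gE_{\gamma_0}^{\bullet,\pi}$ and $\pi'(w)=g\pi(w)g^{-1}$, so $\pi'(w)E_{\gamma_0}^{s,\pi'}\cap E_{\gamma_0}^{u,\pi'}=g\big(\pi(w)E_{\gamma_0}^{s,\pi}\cap E_{\gamma_0}^{u,\pi}\big)$ vanishes exactly when the untwisted intersection does, whence $W_{\pi'}=W_\pi$. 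It then suffices to bound the number of classes. Rational representations of the semisimple group $\mathbf{G}$ of a fixed dimension $d$ fall into finitely many isomorphism classes (only finitely many dominant weights occur, and a $d$-dimensional representation has at most $d$ irreducible summands), so $\sigma$ ranges over a finite set up to conjugacy. Finally, one must check that the compact superrigidity error does not change the splitting: since $\kappa(\gamma_0)$ is precompact and commutes with $\sigma(\gamma_0)$, the two are simultaneously triangularizable and $\kappa(\gamma_0)$ has unit-modulus eigenvalues, so $\pi(\gamma_0)=\sigma(\gamma_0)\kappa(\gamma_0)$ and $\sigma(\gamma_0)$ have the same eigenvalue moduli and hence the same stable and unstable spaces. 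Therefore $W_\pi=W_\sigma$, the family $\{W_\pi\}$ is finite, and $W$ is a finite intersection of Zariski open dense subsets of the connected group $G$, hence Zariski open and dense.
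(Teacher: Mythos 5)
Your proposal is correct and follows essentially the same route as the paper: the paper's (one-line) proof also rests on the fact that, up to conjugation, there are only finitely many representations $\pi:\Gamma\to{\rm GL}(d+1,\mathbb{R})$, reducing $W$ to a finite intersection of nonempty Zariski open subsets of the connected group $\mathbf{G}$. Your additional details --- openness of each $W_\pi$ via determinants of regular functions, the interpretation of $\pi(w)$ through the superrigid extension, and the check that the compact correction does not alter $E_{\gamma_0}^{s,\pi}$ and $E_{\gamma_0}^{u,\pi}$ --- are exactly the steps the paper delegates to \cite{BrownRodriguez-HertzWang2017}*{Lemma 8.11}.
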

\begin{proof}
This follows since there are, up to conjugation, only finitely many representations $\pi:\Gamma\to{\rm GL}(d+1,\mathbb{R})$ (see also \cite{BrownRodriguez-HertzWang2017}*{Lemma 8.11}).
\end{proof}
Our next goal is to show that for any $\gamma\in W\cap\Gamma$ we have $D\alpha(\gamma)E^{s}(x)\cap E^{u}(\alpha(\gamma)x) = E^{s}(\alpha(\gamma)x)\cap D\alpha(\gamma)E^{u}(x) = 0$. This is essentially (the proof of) \cite{BrownRodriguez-HertzWang2017}*{Proposition 8.7}, but the proof has to be slightly altered since our action does not contain an Anosov diffeomorphism (which implies that there are uncountably many ergodic $\alpha-$invariant measures).
\begin{lemma}\label{L:TransversalityLemma}
If $\gamma\in W\cap\Gamma$ then $\alpha(\gamma)^{*}E^{s}$ is transverse to $E^{u}$ (in $E^{s}\oplus E^{u}$) and $\alpha(\gamma)^{*}E^{u}$ is transverse to $E^{s}$ (in $E^{s}\oplus E^{u}$).
\end{lemma}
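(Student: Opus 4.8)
The plan is to realize the derivative of $\alpha$ along the $d$-dimensional quotient bundle $TX_{\Lambda}/E^{c}$ as a measurable linear cocycle and feed it into Zimmer's cocycle superrigidity. Since $\alpha$ preserves the center foliation (\Cref{Thm:LatticeEquivariantFibration}), it preserves $E^{c}$, so the induced map $\overline{D\alpha(\gamma)}$ on $TX_{\Lambda}/E^{c}$ is well defined, and under the canonical identification of $E^{s}\oplus E^{u}$ with $TX_{\Lambda}/E^{c}$ the statement reduces to: for $\gamma\in W\cap\Gamma$ one has $\overline{D\alpha(\gamma)}\bar{E}^{s}(x)\cap\bar{E}^{u}(\alpha(\gamma)x)=0$, and symmetrically, for every $x$. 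After a measurable trivialization I obtain a cocycle $\beta\colon\Gamma\times X_{\Lambda}\to{\rm GL}(d,\mathbb{R})$; the integrability hypothesis \eqref{L1int} is automatic since $\alpha$ is smooth and $X_{\Lambda}$ is compact. Applying \Cref{thm:ZCSR} with respect to the fully supported invariant measure $\mu$ from \Cref{lem:cocycleproblem} (or to its ergodic components) yields $\beta(\gamma,x)=\phi(\alpha(\gamma)x)\pi(\gamma)\kappa(\gamma,x)\phi(x)^{-1}$ with $\pi\colon G\to{\rm GL}(d,\mathbb{R})$ a continuous representation and $\kappa$ valued in a compact group $K$ commuting with $\pi(G)$.

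Next I would identify the measurable invariant bundles with the linear model. Because $\kappa$ is compact it carries no Lyapunov exponent, so the Oseledets decomposition of $\beta$ over the dynamics of $f=\alpha(\gamma_{0})$ matches, through $\phi$, the eigenspace decomposition of $\pi(\gamma_{0})$; as there is no zero exponent on $TX_{\Lambda}/E^{c}$, the stable (resp.\ unstable) Oseledets space is exactly the continuous bundle $\bar{E}^{s}$ (resp.\ $\bar{E}^{u}$), whence $\bar{E}^{s}(x)=\phi(x)E_{\gamma_{0}}^{s,\pi}$ and $\bar{E}^{u}(x)=\phi(x)E_{\gamma_{0}}^{u,\pi}$ for $\mu$-almost every $x$. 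Since $K$ commutes with $\pi(\gamma_{0})$ it preserves $E_{\gamma_{0}}^{s,\pi}$ and $E_{\gamma_{0}}^{u,\pi}$, so $\kappa(\gamma,x)E_{\gamma_{0}}^{s,\pi}=E_{\gamma_{0}}^{s,\pi}$ and the cocycle relation gives $\overline{D\alpha(\gamma)}\bar{E}^{s}(x)=\phi(\alpha(\gamma)x)\,\pi(\gamma)E_{\gamma_{0}}^{s,\pi}$, while $\bar{E}^{u}(\alpha(\gamma)x)=\phi(\alpha(\gamma)x)E_{\gamma_{0}}^{u,\pi}$. As $\phi(\alpha(\gamma)x)$ is invertible, transversality at $x$ is equivalent to the constant linear condition $\pi(\gamma)E_{\gamma_{0}}^{s,\pi}\cap E_{\gamma_{0}}^{u,\pi}=0$. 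Viewing $\pi$ as acting on $\mathbb{R}^{d}\cong TX_{\Lambda}/E^{c}$ and trivially on the center line, it is one of the representations $\Gamma\to 1\times{\rm GL}(d,\mathbb{R})$ entering \Cref{L:DefinitionOfZariskiDenseSet}, so membership $\gamma\in W$ yields this linear transversality — and its symmetric counterpart — for $\mu$-almost every $x$.

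The main obstacle is upgrading this almost-everywhere transversality to a statement valid at \emph{every} point, and this is exactly where the absence of an Anosov element bites: there are uncountably many ergodic $\alpha$-invariant measures, so one cannot isolate a single natural measure (such as the measure of maximal entropy of an Anosov element) as in \cite{BrownRodriguez-HertzWang2017}*{Proposition 8.7}. My plan is to exploit continuity of the compared objects: $\bar{E}^{s},\bar{E}^{u}$ are continuous bundles by unique integrability of the stable and unstable distributions of $f$, and $\overline{D\alpha(\gamma)}$ is continuous, so the non-transversal set $B_{\gamma}=\{x:\overline{D\alpha(\gamma)}\bar{E}^{s}(x)\cap\bar{E}^{u}(\alpha(\gamma)x)\neq 0\}$ is closed; since $\mu$ has full support, $\mu(B_{\gamma})=0$ already forces $B_{\gamma}$ to have empty interior. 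To conclude $B_{\gamma}=\emptyset$ I would promote the measurable framing to a continuous one: the identifications $\bar{E}^{\sigma}(x)=\phi(x)E_{\gamma_{0}}^{\sigma,\pi}$ match a continuous (indeed Hölder) invariant subbundle with a constant one, so by the regularity theory for measurable conjugacies of Hölder cocycles over hyperbolic systems (in the spirit of \cite{KalininSadovskaya2006}) the relevant reduction is continuous on all of $X_{\Lambda}$, and the linear transversality transfers to every point. Alternatively, one can propagate positivity of the angle from the full-measure set to all of $X_{\Lambda}$ using accessibility of $f$ (\Cref{Thm:PropertiesOfFibration}(v)) together with the $su$-holonomy invariance of the conditionals of $\mu$ furnished by the invariance principle \cite{AvilaViana2010}*{Theorem D}. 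Either route replaces the ``unique measure'' step of \cite{BrownRodriguez-HertzWang2017}, and the transversality of $D\alpha(\gamma)E^{u}$ with $E^{s}$ follows by the identical argument with the roles of $s$ and $u$ exchanged.
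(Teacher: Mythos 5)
Your proposal correctly sets up the core mechanism — apply Zimmer's cocycle superrigidity to the derivative cocycle, use that $\kappa$ commutes with $\pi(\gamma_{0})$ to identify $\overline{D\alpha(\gamma)}\bar{E}^{s}(x)=\phi(\alpha(\gamma)x)\pi(\gamma)E_{\gamma_{0}}^{s,\pi}$ and $\bar{E}^{u}(\alpha(\gamma)x)=\phi(\alpha(\gamma)x)E_{\gamma_{0}}^{u,\pi}$, and invoke $\gamma\in W$ for the linear transversality — and you correctly isolate the crux: this only holds $\mu$-almost everywhere, transversality is an open condition, so full support of $\mu$ gives a dense open good set but cannot by itself rule out a nonempty closed nowhere-dense bad set. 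The problem is that neither of your two proposed fixes closes this gap. The Kalinin--Sadovskaya continuity theory for measurable conjugacies is stated for H\"older cocycles over \emph{hyperbolic} base systems; here the base dynamics of $\gamma_{0}$ is only partially hyperbolic, the trivializing map $\phi$ is merely measurable, and the target cocycle $\pi(\gamma)\kappa(\gamma,x)$ carries a compact-valued correction $\kappa$ that is itself only measurable, so none of the hypotheses of that theory are met and continuity of the framing does not follow. The accessibility route is vaguer still: the $su$-holonomy invariance furnished by the invariance principle concerns the disintegration of $\mu$ along center fibers, not the bundles $\bar{E}^{\sigma}$ or the angle between $\overline{D\alpha(\gamma)}\bar{E}^{s}$ and $\bar{E}^{u}$, and there is no mechanism given by which accessibility would propagate positivity of that angle off the full-measure set. (A secondary issue: Theorem \ref{thm:ZCSR} requires an ergodic measure, and the ergodic components of the measure $\mu$ from Lemma \ref{lem:cocycleproblem} need not individually have full support, which would undercut even the density step.)

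The paper resolves this differently, and the missing idea is worth internalizing: the argument is run \emph{pointwise on orbit closures}. For each $x$ one takes $K_{x}=\overline{\alpha(\Gamma)x}$ and, using the already-established topological conjugacy to an affine action together with the homogeneity of affine orbit closures (\cite{BrownRodriguez-HertzWang2017}*{Proposition 6.5}), produces an ergodic $\alpha$-invariant measure $\mu_{x}$ with ${\rm supp}(\mu_{x})=K_{x}$. Superrigidity applied to $\mu_{x}$ gives the measurable framing, and the key algebraic input is that the ${\rm GL}(\mathfrak{n})$-orbit of the tuple of model subspaces in the product of Grassmannians is \emph{open in its closure} (\cite{BrownRodriguez-HertzWang2017}*{Lemma 8.8}); pulling back by the continuous map $\tau$ recording the bundles $E^{\sigma}_{j}$ yields a set $U^{\pi}$ that is relatively open, dense, and $\alpha$-invariant in $K_{x}$, carrying a genuinely continuous framing $\Hat{C}$. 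The elementary Lemma \ref{L:PointLieInOpenInvSet} then forces $x\in U^{\pi}$, which is exactly the everywhere statement you need (and need uniformly, since Lemma \ref{L:ConeConditionFromTrans} requires the cone estimates at every point). Without this orbit-closure/locally-closed-orbit mechanism, or a genuine substitute for it, your proof is incomplete at its decisive step.
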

Fix $x\in X_{\Lambda}$. Recall that $\gamma_{0}\in\Gamma$ is such that $f = \alpha(\gamma_{0})$ is partially hyperbolic with $1-$dimensional center. We denote by $K_{x} = \overline{\alpha(\Gamma)x}$ which is $\alpha-$invariant. The following lemma is elementary.
\begin{lemma}\label{L:PointLieInOpenInvSet}
If $U\subset K_{x}$ is relatively open, non-empty, and $\alpha-$invariant then $x\in U$.
\end{lemma}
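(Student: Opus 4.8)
The plan is elementary and simply combines the three hypotheses on $U$ with the density of the orbit $\alpha(\Gamma)x$ in $K_{x}$. The only content lies in how non-emptiness, relative openness, and invariance interact.

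First I would fix any point $y\in U$, which exists since $U$ is non-empty, and use relative openness to write $U = V\cap K_{x}$ for some open set $V\subset X_{\Lambda}$ with $y\in V$. Since $y\in K_{x} = \overline{\alpha(\Gamma)x}$, I would choose a sequence $\gamma_{n}\in\Gamma$ with $\alpha(\gamma_{n})x\to y$. Each point $\alpha(\gamma_{n})x$ lies in $K_{x}$, and since $V$ is an open neighborhood of $y$, for all sufficiently large $n$ we also have $\alpha(\gamma_{n})x\in V$; hence $\alpha(\gamma_{n})x\in V\cap K_{x} = U$ for all large $n$.

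To conclude I would invoke the $\alpha$-invariance of $U$. Fixing one such $n$, the inclusion $\alpha(\gamma_{n})x\in U$ together with $\alpha(\gamma_{n})^{-1}U = U$ gives $x = \alpha(\gamma_{n}^{-1})\alpha(\gamma_{n})x\in U$, as desired. There is no genuine obstacle here, as the statement is purely point-set topological; the one point worth keeping straight is that $\alpha$-invariance of $U$ under the whole group $\Gamma$ (rather than mere forward invariance under a semigroup) is precisely what permits pulling $x$ back into $U$ via $\gamma_{n}^{-1}$.
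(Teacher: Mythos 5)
Your proof is correct and is essentially the contrapositive of the paper's argument: the paper sets $C=K_{x}\setminus U$, notes it is closed and $\alpha$-invariant, and observes that $x\in C$ would force $K_{x}=\overline{\alpha(\Gamma)x}\subset C$, contradicting $U\neq\emptyset$; you instead argue directly that the dense orbit must enter the open set $U$ and then pull $x$ back by invariance. Both rest on the same two facts (orbit density and invariance under the full group), so no substantive difference.
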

\begin{proof}
Let $C = K_{x}\setminus U$, then $C$ is compact and $\alpha-$invariant. If $x\in C$ then $\alpha(\Gamma)x\subset C$ so $K_{x} = \overline{\alpha(\Gamma)x}\subset C$ which is a contradiction since $U$ is non-empty.
\end{proof}
\begin{lemma}
There is a $\alpha-$invariant ergodic probability measure $\mu_{x}$ with support $K_{x}$.
\end{lemma}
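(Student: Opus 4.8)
The plan is to build $\mu_{x}$ by transporting a homogeneous measure from the base torus up to $K_{x}$ through the semiconjugacy $\Phi$, using the finiteness of the center action to control the fibers.

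First I would record the structure of $K_{x}$. By \Cref{Thm:LatticeEquivariantFibration} the map $\Phi$ intertwines $\alpha$ with the linear action $\rho$ on $\mathbb{T}^{d}$, so $\bar{K} := \Phi(K_{x}) = \overline{\rho(\Gamma)\Phi(x)}$ is the closure of a single $\rho(\Gamma)$-orbit, where $\rho(\Gamma)\subset{\rm GL}(d,\mathbb{Z})$ is the (injective, by \Cref{L:HomotopyRepIsInjective}) image of a higher rank lattice containing the hyperbolic matrix $\rho(\gamma_{0})$. Next I would pin down the fibers: on each center leaf $W^{c}(y)$ the group $\Gamma$ acts through a finite group of $\intd_{c}$-isometries, by \Cref{L:InvariantCenterMetric} together with the circle rigidity of \cites{DeroinHurtado,Navas}, exactly as in the proof of \Cref{L:HomotopyRepIsInjective}. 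Consequently the orbit of $x$ meets each fiber of $\Phi|_{K_{x}}$ in a finite set, so $\Phi\colon K_{x}\to\bar{K}$ is a finite-to-one extension on which $\Gamma$ permutes the sheets; since $x$ has dense orbit in $K_{x}$, this monodromy is transitive.

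The heart of the argument is producing an ergodic, full support, $\rho$-invariant probability measure $\bar\nu$ on $\bar{K}$. Here I would invoke homogeneous dynamics for the linear toral action: because $\rho(\Gamma)$ is higher rank and contains a hyperbolic element, the orbit closure $\bar{K}$ is a translate of a rational subtorus carrying a unique invariant measure $\bar\nu$, namely the Haar measure of that coset, which is therefore ergodic and of full support on $\bar{K}$ (this is the orbit-closure and stiffness theory of Benoist--Quint, and for tori also Bourgain--Furman--Lindenstrauss--Mozes; alternatively one transports the problem to the affine model furnished by \Cref{Thm:TopologicalRigidity}). With $\bar\nu$ in hand I would define $\mu_{x}$ by disintegration over $\Phi$, taking $\bar\nu$ on the base and the normalized counting measure on the finite fiber $K_{x}\cap\Phi^{-1}(y)$. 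Invariance of $\mu_{x}$ follows from invariance of $\bar\nu$, the $\Gamma$-equivariance of $\Phi$, and the finiteness of the fiber action; its support equals $\Phi^{-1}(\bar{K})\cap K_{x}=K_{x}$, since $\bar\nu$ has full support on $\bar{K}$ and every fiber point is charged; and ergodicity follows from ergodicity of $(\bar{K},\bar\nu)$ and transitivity of the sheet monodromy, a transitive finite extension of an ergodic base being ergodic.

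The main obstacle is the construction of $\bar\nu$: since $\Gamma$ is non-amenable there is no soft averaging producing an invariant measure on the proper orbit closure $\bar{K}$, so one genuinely needs the higher rank structure of $\rho$ to classify the orbit closure and its invariant measure. The secondary technical point is the bookkeeping for the lift---verifying that the fiber cardinality is $\bar\nu$-almost everywhere constant and that the counting-measure disintegration is measurable and $\alpha$-invariant---but this is routine once the finiteness of the center action and the transitivity of the monodromy are in place. I note that \Cref{L:PointLieInOpenInvSet} is not needed for this lemma itself; rather, the full support conclusion here is precisely what will later combine with \Cref{L:PointLieInOpenInvSet} to upgrade almost-everywhere statements on $K_{x}$ to the specific point $x$.
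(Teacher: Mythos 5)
Your overall strategy (push the orbit closure down to the base torus, find the invariant measure there, and lift it back) is reasonable, but it contains a genuine gap at the step where you claim that $\Phi|_{K_{x}}:K_{x}\to\bar{K}$ is a finite-to-one extension. The finiteness results of Deroin--Hurtado and Navas that are used in the proof of \Cref{L:HomotopyRepIsInjective} apply there because the group acting on the center leaves is $\ker\rho$, which is \emph{normal} in $\Gamma$ and hence either finite or itself a higher rank lattice; the group relevant to your fiber over a point $y$ is the stabilizer $\{\gamma:\rho(\gamma)\Phi(y)=\Phi(y)\}$, which is neither, so all you know is that it acts through a subgroup of the isometry group of the circle (by \Cref{L:InvariantCenterMetric}), and such a subgroup can be infinite with dense orbits. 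Worse, $K_{x}\cap W^{c}(y)$ is a limit set, not a single stabilizer orbit, and in the situation of \Cref{MainThm:NonAbelianCase} it is typically the \emph{entire} center circle: transporting to the affine model via the conjugacy $H$ of \Cref{Thm:TopologicalRigidity}, the orbit closure $HK_{x}=\overline{\Hat{\rho}(\Gamma)H(x)}$ is a finite union of nilsubmanifolds (this is \cite{BrownRodriguez-HertzWang2017}*{Proposition 6.5}), and for generic $x$ this is all of $X_{\Lambda}$, so the fibers of $\Phi$ over $\bar K$ are whole circles and your normalized counting measure on the fibers is not defined. Your ergodicity argument ("transitive monodromy of a finite extension") inherits the same problem and is in any case too quick even in the finite-fiber case, since topological transitivity of the sheet permutation does not by itself give measure-theoretic ergodicity of the extension.

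The paper's proof avoids all of this by working entirely in the affine model: $HK_{x}$ is homogeneous, so it carries a canonical $\Hat{\rho}$-invariant volume $\nu$ of full support, and one checks ergodicity of the single element $\Hat{\rho}(\gamma_{0})$ on $(HK_{x},\nu)$ by the dichotomy on the tangent Lie algebra $\mathfrak{p}$ of $HK_{x}$: either $[\mathfrak{p},\mathfrak{p}]=E_{0}^{c}$, in which case the fibers are full circles and ergodicity follows from the nilsystem structure, or $[\mathfrak{p},\mathfrak{p}]=0$, in which case $HK_{x}$ avoids the $[N,N]$-direction, the fibers are finite, and $\Hat{\rho}(\gamma_{0})$ is hyperbolic on $HK_{x}$. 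Then $\mu_{x}=H_{*}^{-1}\nu$. If you want to salvage your approach you would need to replace the counting measure on fibers by the correct conditional measures (Haar on the fiber intersections of the homogeneous set $HK_{x}$), at which point you have essentially reconstructed the paper's argument; the classification of $\bar K$ on the base via Benoist--Quint type results is not needed once one quotes the homogeneity of the orbit closure upstairs.
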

\begin{proof}
Let $H$ be the conjugacy from \Cref{Thm:TopologicalRigidity}. We have $HK_{x} = H\overline{\alpha(\Gamma)x} = \overline{H\alpha(\Gamma)x} = \overline{\Hat{\rho}(\Gamma)H(x)}$, so $HK_{x}$ is a $\Hat{\rho}-$orbit closure. By \cite{BrownRodriguez-HertzWang2017}*{Proposition 6.5} the set $HK_{x}$ is homogeneous and can be written as a finite union of nilsubmanifolds of $X_{\Lambda}$. It is clear that the element $\Hat{\rho}(\gamma_{0})$ acts ergodically on $HK_{x}$ with respect to the $\Hat{\rho}-$invariant volume $\nu$ (if $\mathfrak{p}$ is the Lie algebra tangent to $HK_{x}$ then either $E_{0}^{c} = [\mathfrak{p},\mathfrak{p}]$ in which case $\Hat{\rho}(\gamma_{0})$ is ergodic with respect to the volume on $HK_{x}$ or $[\mathfrak{p},\mathfrak{p}] = 0$ in which case $HK_{x}$ does not contain $[N,N]$ so $\Hat{\rho}(\gamma_{0})$ is hyperbolic on $HK_{x}$). With $\mu_{x} = H_{*}^{-1}\nu$ the lemma follows.
\end{proof}
Let $D\alpha:\Gamma\times X_{\Lambda}\to{\rm GL}(\mathfrak{n})$ be the derivative cocycle of $\alpha$ (where we have used that the tangent bundle of $X_{\Lambda}$ can naturally be identified with $X_{\Lambda}\times\mathfrak{n}$). By Zimmer's superrigidity (Theorem \ref{thm:ZCSR}) there is a representation $\pi:\Gamma\to{\rm GL}(\mathfrak{n})$, a measurable map $C:X_{\Lambda}\to{\rm GL}(\mathfrak{n})$, and a $\alpha-$cocycle taking values in a compact group $K:\Gamma\times X_{\Lambda}\to{\rm SO}(\mathfrak{n})$ such that
\begin{align}
D_{x}\alpha(\gamma) = C(\alpha(\gamma)x)\pi(\gamma)K(\gamma,x)C(x)^{-1},\quad\mu_{x}-{\rm a.s}.
\end{align}
Note that $\pi(\gamma_{0})$ is partially hyperbolic with $1-$dimensional center. Moreover, the space $E^{c}$ is $\alpha-$invariant so the representation $\pi = \pi'\oplus 1$ has an identity factor (coinciding with $E^{c}$ i.e. $E^{c} = C(x)V_{1}$ where $\mathfrak{n} = V_{\pi} = V_{1}\oplus V_{\pi'}$). That is, $\pi$ satisfies the assumptions of Lemma \ref{L:DefinitionOfZariskiDenseSet}. This also shows that
\begin{align}
\alpha(\gamma)^{*}(E^{s}\oplus E^{u}) = E^{s}\oplus E^{u},\quad\gamma\in\Gamma
\end{align}
since $C(x)V_{\pi'}$ is $\alpha(\gamma_{0})-$invariant and transverse to $E^{c}$, but $E^{s}\oplus E^{u}$ is unique with this property. Let $\{\gamma_{1},\gamma_{2},...\} = \Gamma\setminus\{\gamma_{0}\}$ be an enumeration of $\Gamma\setminus\{\gamma_{0}\}$. Define $\eta_{j} = \gamma_{j}\gamma_{0}\gamma_{j}^{-1}$ and note that each $\pi(\eta_{j})$ is partially hyperbolic with $1-$dimensional center since $\pi(\eta_{j})$ is conjugated to $\pi(\gamma_{0})$. Denote by $E_{\eta_{j}}^{\sigma,\pi}$, $\sigma = s,u$, the stable or unstable space of $\pi(\eta_{j})$. It is clear that $\pi(\gamma_{j})E_{\gamma_{0}}^{\sigma,\pi} = E_{\eta_{j}}^{\sigma,\pi}$. Let
\begin{align}
\mathbf{S}^{\pi,r} = \bigcap_{\substack{j = 0,...,r \\ \sigma = s,u}}{\rm Stab}(E_{\eta_{j}}^{\sigma,\pi}) = \bigcap_{\substack{j = 0,...,r \\ \sigma = s,u}}{\rm Stab}(E_{\eta_{j}}^{\sigma,\pi}) = \bigcap_{\substack{j = 0,...,r \\ \sigma = s,u}}{\rm Stab}(\pi(\gamma_{j})E_{\gamma_{0}}^{\sigma,\pi})
\end{align}
then $\mathbf{S}^{\pi,r+1}\subset\mathbf{S}^{\pi,r}$. Moreover, each $\mathbf{S}^{\pi,r}$ is an algebraic subgroup of $G$ so the sequence $\mathbf{S}^{\pi,r}$ terminates \cite{BrownRodriguez-HertzWang2017}*{Section 8.3}. It follows that there is some $r_{0}(\pi)$ such that
\begin{align}
\mathbf{S}^{\pi,r_{0}(\pi)} = \mathbf{S}^{\pi} := \bigcap_{\substack{\gamma\in\Gamma \\ \sigma = s,u}}{\rm Stab}(\pi(\gamma_{j})E_{\gamma_{0}}^{\sigma,\pi}) = \bigcap_{\substack{g\in G \\ \sigma = s,u}}{\rm Stab}(\pi(g)E_{\gamma_{0}}^{\sigma,\pi})
\end{align}
where the last equality follows since $\Gamma$ is Zariski dense in $G$ \cite{BrownRodriguez-HertzWang2017}*{Section 8.3}. We also define $E_{j}^{\sigma}$, $\sigma = s,u$, to be the invariant distributions associated to $\alpha(\eta_{j})$ (note that $\alpha(\eta_{j})$ is conjugated to $f = \alpha(\gamma_{0})$, so $\alpha(\eta_{j})$ is in particular partially hyperbolic with $1-$dimensional center). Let 
\begin{align}
& d_{s} = \dim E^{s}(x) = \dim E_{\gamma_{0}}^{s,\pi} = \dim E_{\eta_{j}}^{s,\pi}, \\
& d_{u} = \dim E^{u}(x) = \dim E_{\gamma_{0}}^{u,\pi} = \dim E_{\eta_{j}}^{u,\pi},
\end{align}
and denote by ${\rm Gr}(d_{s},\mathfrak{n})$, ${\rm Gr}(d_{u},\mathfrak{n})$ the Grassmanians with $d_{\sigma}-$dimensional subspaces. Let
\begin{align*}
\Phi:{\rm GL}(\mathfrak{n})\times\left[{\rm Gr}(d_{s},\mathfrak{n})^{r_{0}(\pi)}\times{\rm Gr}(d_{u},\mathfrak{n})^{r_{0}(\pi)}\right]\to{\rm Gr}(d_{s},\mathfrak{n})^{r_{0}(\pi)}\times{\rm Gr}(d_{u},\mathfrak{n})^{r_{0}(\pi)}
\end{align*}
be the standard action. Define a map $\tau:X_{\Lambda}\to{\rm Gr}(d_{s},\mathfrak{n})^{r_{0}(\pi)}\times{\rm Gr}(d_{u},\mathfrak{n})^{r_{0}(\pi)}$ by
\begin{align}
\tau(x) = \left((E_{j}^{s})_{j = 0,...,r_{0}(\pi)},(E_{j}^{u})_{j = 0,...,r_{0}(\pi)}\right)
\end{align}
it is then clear that $\tau$ is a continuous map. It is also clear that $C(x)E_{\eta_{j}}^{\sigma,\pi} = E_{j}^{\sigma}(x)$ (since $K(\gamma,x)$ commute with $\pi(\gamma)$) so 
\begin{align}
\Phi\left(C(y),\left((E_{\eta_{j}}^{s,\pi})_{j = 0,...,r_{0}(\pi)},(E_{\eta_{j}}^{u,\pi})_{j = 0,...,r_{0}(\pi)}\right)\right) = \tau(y)\quad\mu_{x}-{\rm a.s}.
\end{align}
It follows that 
\begin{align}
\tau({\rm supp}(\mu_{x}))\subset\overline{{\rm Orb}_{\Phi}\left((E_{\eta_{j}}^{s,\pi})_{j = 0,...,r_{0}(\pi)},(E_{\eta_{j}}^{u,\pi})_{j = 0,...,r_{0}(\pi)}\right)}.
\end{align}
Since $\tau(y)\in{\rm Orb}_{\Phi}\left((E_{\eta_{j}}^{s,\pi})_{j = 0,...,r_{0}(\pi)},(E_{\eta_{j}}^{u,\pi})_{j = 0,...,r_{0}(\pi)}\right)$ for $\mu_{x}-$almost every $y$ and since ${\rm Orb}_{\Phi}\left((E_{\eta_{j}}^{s,\pi})_{j = 0,...,r_{0}(\pi)},(E_{\eta_{j}}^{u,\pi})_{j = 0,...,r_{0}(\pi)}\right)$ is open in 
\begin{align}
\overline{{\rm Orb}_{\Phi}\left((E_{\eta_{j}}^{s,\pi})_{j = 0,...,r_{0}(\pi)},(E_{\eta_{j}}^{u,\pi})_{j = 0,...,r_{0}(\pi)}\right)},
\end{align}
see \cite{BrownRodriguez-HertzWang2017}*{Lemma 8.8}, it follows that 
\begin{align}
U^{\pi} := \tau^{-1}{\rm Orb}_{\Phi}\left((E_{\eta_{j}}^{s,\pi})_{j = 0,...,r_{0}(\pi)},(E_{\eta_{j}}^{u,\pi})_{j = 0,...,r_{0}(\pi)}\right)
\end{align}
is open and dense in ${\rm supp}(\mu_{x})$. Define $\Hat{C}:U^{\pi}\to{\rm GL}(\mathfrak{n})/S^{\pi}$ by
\begin{align}
\Phi\left(\Hat{C}(x),\left((E_{\eta_{j}}^{s,\pi})_{j = 0,...,r_{0}(\pi)},(E_{\eta_{j}}^{u,\pi})_{j = 0,...,r_{0}(\pi)}\right)\right) = \tau(x),
\end{align}
it is immediate that $\Hat{C}$ is continuous since $\Phi$ and $\tau$ are both continuous.

The following lemma is \cite{BrownRodriguez-HertzWang2017}*{Lemma 8.9}.
\begin{lemma}\label{L:PropertiesOfBundleParametrization}
The set $U^{\pi}$ is $\alpha-$invariant and for $y\in U^{\pi}$ we have
\begin{align}
D_{y}\alpha(\gamma)E^{\sigma}(y) = D_{y}\alpha(\gamma)\Hat{C}(y)E_{\gamma_{0}}^{\sigma,\pi} = \Hat{C}(\alpha(\gamma)y)\pi(\gamma)E_{\gamma_{0}}^{\sigma,\pi}.
\end{align}
\end{lemma}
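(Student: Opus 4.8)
The plan is to deduce both assertions from one algebraic identity, avoiding any separate ``almost everywhere to continuous'' extension. The first equality $E^{\sigma}(y)=\Hat{C}(y)E_{\gamma_{0}}^{\sigma,\pi}$ is just the $j=0$ component of the defining relation $\Phi(\Hat{C}(y),((E_{\eta_{j}}^{\sigma,\pi})_{j}))=\tau(y)$, since $\eta_{0}=\gamma_{0}$ and $E_{0}^{\sigma}=E^{\sigma}$. The substance is the second equality, and I would prove it \emph{simultaneously} with the $\alpha$-invariance of $U^{\pi}$, since the two are really the same computation.

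First I would promote the defining matching of $\Hat{C}$ from the finitely many exponents $j=0,\dots,r_{0}(\pi)$ to all $j$: I claim that for every $y\in U^{\pi}$ and every index $j$ one has $\Hat{C}(y)E_{\eta_{j}}^{\sigma,\pi}=E_{j}^{\sigma}(y)$. The key point is that $S^{\pi}$ stabilizes every $E_{\eta_{j}}^{\sigma,\pi}$ (this is exactly the content of the stabilization $\mathbf{S}^{\pi,r_{0}(\pi)}=\mathbf{S}^{\pi}$), so $y\mapsto\Hat{C}(y)E_{\eta_{j}}^{\sigma,\pi}$ is a well-defined continuous map $U^{\pi}\to{\rm Gr}(d_{\sigma},\mathfrak n)$; comparing it with the continuous map $y\mapsto E_{j}^{\sigma}(y)$ (the stable/unstable distribution of the partially hyperbolic $\alpha(\eta_{j})$), the locus where the two agree is closed in $U^{\pi}$. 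By the relation $C(y)E_{\eta_{j}}^{\sigma,\pi}=E_{j}^{\sigma}(y)$ together with $C(y)\equiv\Hat{C}(y)\bmod S^{\pi}$ (both send $((E_{\eta_{j}}^{\sigma,\pi})_{j})$ to $\tau(y)$), this agreement locus contains the full-measure set, which is dense in $U^{\pi}$; hence it is all of $U^{\pi}$.

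Next I would exploit the conjugation relation in $\Gamma$. Writing $\gamma\eta_{j}\gamma^{-1}=(\gamma\gamma_{j})\gamma_{0}(\gamma\gamma_{j})^{-1}=\eta_{j'}$ with $\gamma_{j'}=\gamma\gamma_{j}$, we get $\alpha(\gamma)\alpha(\eta_{j})\alpha(\gamma)^{-1}=\alpha(\eta_{j'})$, so the distributions transform by $D_{y}\alpha(\gamma)E_{j}^{\sigma}(y)=E_{j'}^{\sigma}(\alpha(\gamma)y)$. Now fix $y\in U^{\pi}$ and a representative $g$ of $\Hat{C}(y)$, and set $g'=D_{y}\alpha(\gamma)\,g\,\pi(\gamma)^{-1}$. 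For each index $i$, taking $\gamma_{j}=\gamma^{-1}\gamma_{i}$ (so $\gamma\gamma_{j}=\gamma_{i}$) and using $E_{\eta_{i}}^{\sigma,\pi}=\pi(\gamma_{i})E_{\gamma_{0}}^{\sigma,\pi}$, the previous two facts give
\begin{align*}
g'E_{\eta_{i}}^{\sigma,\pi}=D_{y}\alpha(\gamma)\,g\,\pi(\gamma_{j})E_{\gamma_{0}}^{\sigma,\pi}=D_{y}\alpha(\gamma)\,gE_{\eta_{j}}^{\sigma,\pi}=D_{y}\alpha(\gamma)E_{j}^{\sigma}(y)=E_{i}^{\sigma}(\alpha(\gamma)y),
\end{align*}
where the third equality is the matching from the previous paragraph (valid for the possibly large index $j=[\gamma^{-1}\gamma_{i}]$). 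Thus $g'$ matches every $E_{\eta_{i}}^{\sigma,\pi}$ to $E_{i}^{\sigma}(\alpha(\gamma)y)$, so $\tau(\alpha(\gamma)y)$ lies in ${\rm Orb}_{\Phi}((E_{\eta_{i}}^{\sigma,\pi})_{i})$, i.e. $\alpha(\gamma)y\in U^{\pi}$ (this is the invariance), and $g'$ is a representative of $\Hat{C}(\alpha(\gamma)y)$. Since $S^{\pi}$ also stabilizes $\pi(\gamma)E_{\gamma_{0}}^{\sigma,\pi}$, the class $\Hat{C}(\alpha(\gamma)y)\pi(\gamma)E_{\gamma_{0}}^{\sigma,\pi}$ is well defined and equals
\begin{align*}
g'\pi(\gamma)E_{\gamma_{0}}^{\sigma,\pi}=D_{y}\alpha(\gamma)\,g\,\pi(\gamma)^{-1}\pi(\gamma)E_{\gamma_{0}}^{\sigma,\pi}=D_{y}\alpha(\gamma)\,gE_{\gamma_{0}}^{\sigma,\pi}=D_{y}\alpha(\gamma)E^{\sigma}(y),
\end{align*}
which is precisely the desired identity.

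I expect the main obstacle to be the first step, namely promoting the matching of $\Hat{C}$ to all exponents $j$, because this is exactly where the absence of an Anosov element is felt: there is no single fully supported $\alpha$-invariant measure, so instead of working on all of $X_{\Lambda}$ I must localize to $\mu_{x}$ (supported on $K_{x}$) and use that a full-measure set is dense in the open subset $U^{\pi}\subset K_{x}$. The stabilization $\mathbf{S}^{\pi,r_{0}(\pi)}=\mathbf{S}^{\pi}$ is what makes $\Hat{C}(y)E_{\eta_{j}}^{\sigma,\pi}$ meaningful for the infinitely many indices produced by the reindexing $\gamma^{-1}\gamma_{i}$, and thereby lets the finite framing control all the distributions at once.
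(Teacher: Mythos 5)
Your proposal is correct and takes essentially the route of the source the paper leans on: the paper gives no argument for this lemma, citing \cite{BrownRodriguez-HertzWang2017}*{Lemma 8.9} verbatim, and your reconstruction is exactly that argument, correctly localized to $K_{x}={\rm supp}(\mu_{x})$ as the absence of an Anosov element (hence of a single fully supported invariant measure) requires. The two load-bearing points you identify --- promoting the $\mu_{x}$-a.e.\ matching $C(y)E_{\eta_{j}}^{\sigma,\pi}=E_{j}^{\sigma}(y)$ to all of $U^{\pi}$ and to all indices $j$ by continuity plus density of a full-measure set in the relatively open $U^{\pi}\subset K_{x}$, and using the terminal stabilizer $\mathbf{S}^{\pi,r_{0}(\pi)}=\mathbf{S}^{\pi}$ to make $\Hat{C}(y)E_{\eta_{j}}^{\sigma,\pi}$ and $\Hat{C}(\alpha(\gamma)y)\pi(\gamma)E_{\gamma_{0}}^{\sigma,\pi}$ well defined before pushing the framing forward by $g'=D_{y}\alpha(\gamma)\,g\,\pi(\gamma)^{-1}$ --- are precisely the ones that make the cited proof go through here.
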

We are now ready to prove Lemma \ref{L:TransversalityLemma}.
\begin{proof}[Proof of Lemma \ref{L:TransversalityLemma}]
Since $U^{\pi}$ is relatively open in $K_{x}$, $\alpha-$invariant, and non-empty (by Lemma \ref{L:PropertiesOfBundleParametrization}) it follows from Lemma \ref{L:PointLieInOpenInvSet} that $x\in U^{\pi}$ (this also implies that $\alpha(\gamma)x\in U^{\pi}$ for all $\gamma\in\Gamma$ since $U^{\pi}$ is $\alpha-$invariant). Let $\gamma\in W\cap\Gamma$. By the assumption on $\gamma$
\begin{align}
\pi(\gamma)E_{\gamma_{0}}^{s,\pi}\cap E_{\gamma_{0}}^{u,\pi} = E_{\gamma_{0}}^{s,\pi}\cap \pi(\gamma)E_{\gamma_{0}}^{u,\pi} = 0
\end{align}
which immediately implies (Lemma \ref{L:PropertiesOfBundleParametrization}) that
\begin{align*}
0 = & \Hat{C}(\alpha(\gamma)x)\left(\pi(\gamma)E_{\gamma_{0}}^{s,\pi}\cap E_{\gamma_{0}}^{u,\pi}\right) = \left(\Hat{C}(\alpha(\gamma)x)\pi(\gamma)E_{\gamma_{0}}^{s,\pi}\right)\cap\left(\Hat{C}(\alpha(\gamma)x)E_{\gamma_{0}}^{u,\pi}\right) = \\ &
= \left(D_{x}\alpha(\gamma)E^{s}(x)\right)\cap\left(E^{u}(\alpha(\gamma)x)\right)
\end{align*}
and similarly
\begin{align*}
\left(D_{x}\alpha(\gamma)E^{u}(x)\right)\cap\left(E^{s}(\alpha(\gamma)x)\right) = 0.
\end{align*}
Since $x$ was arbitrary the lemma follows.
\end{proof}
Let $C_{\varepsilon}^{\sigma}(x)$ be the $\varepsilon-$cone about the distribution $E^{\sigma}(x)$, $\sigma = s,u$. By applying the proof of \cite{BrownRodriguez-HertzWang2017}*{Proposition 8.12} in the bundle $E^{s}\oplus E^{u}$ (which we recall is $\alpha-$invariant) we obtain the following lemma from \Cref{L:TransversalityLemma}.
\begin{lemma}\label{L:ConeConditionFromTrans}
Fix $\varepsilon > 0$ small. If $\gamma\in W\cap\Gamma$ (with $W$ as in Lemma \ref{L:TransversalityLemma}) then there is $N\geq 0$ such that $g = f^{N}\alpha(\gamma)f^{N}$ satisfy
\begin{align}
Dg^{-1}(C_{\varepsilon}^{s}(x))\subset C_{\varepsilon/2}^{s}(g^{-1}x),\quad Dg(C_{\varepsilon}^{u}(x))\subset C_{\varepsilon/2}^{u}(gx)\quad x\in X_{\Lambda}.
\end{align}
Moreover, for each $v^{s}\in C_{\varepsilon}^{s}(x)$ we have $\norm{Dg(v^{s})}\leq\norm{v^{s}}/2$, for each $v^{u}\in C_{\varepsilon}^{u}(x)$ we have $\norm{Dg^{-1}(v^{u})}\leq\norm{v^{u}}/2$, and $g$ is partially hyperbolic with $1-$dimensional center.
\end{lemma}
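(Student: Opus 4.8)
The plan is to verify the standard cone-field criterion for partial hyperbolicity for $g$ directly, working entirely inside the $\alpha$-invariant bundle $V=E^{s}\oplus E^{u}$ (on which $f$ is uniformly hyperbolic) and treating the one-dimensional center $E^{c}$ separately. Fix an adapted metric for $f|_{V}$ so that $\norm{Df|_{E^{s}}}\leq\lambda<1$ and $\norm{Df^{-1}|_{E^{u}}}\leq\lambda<1$, and let $C_{\varepsilon}^{\sigma}(x)\subset V(x)$ be the $\varepsilon$-cone about $E^{\sigma}(x)$, $\sigma=s,u$. Two facts drive the argument. First, high powers of $f$ sharpen cones: there is $\kappa\in(0,1)$ with $Df^{N}(\overline{C_{\varepsilon}^{u}})\subset C_{\kappa^{N}\varepsilon}^{u}$ and $Df^{-N}(\overline{C_{\varepsilon}^{s}})\subset C_{\kappa^{N}\varepsilon}^{s}$, together with the expansion estimate $\norm{Df^{N}v}\geq\tfrac12\lambda^{-N}\norm{v}$ on $C_{\varepsilon}^{u}$ and its symmetric contraction counterpart for $Df^{-N}$ on $C_{\varepsilon}^{s}$. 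Second, by \Cref{L:TransversalityLemma} together with continuity of $E^{s},E^{u}$ and compactness of $X_{\Lambda}$, there is a uniform constant $K_{0}$ with $D\alpha(\gamma)^{\pm1}E^{\sigma}(x)\subset C_{K_{0}}^{\sigma}$ for every $x$; the statement for $\alpha(\gamma)^{-1}$ follows by applying $D\alpha(\gamma)^{-1}$ to the transversality relations of \Cref{L:TransversalityLemma}. It is precisely this everywhere-defined, uniform transversality — in place of the almost-everywhere statement produced from an Anosov element in \cite{BrownRodriguez-HertzWang2017} — that lets the estimate run globally.

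\textbf{Cone invariance.} Writing $Dg=Df^{N}\,D\alpha(\gamma)\,Df^{N}$, I would establish $Dg(\overline{C_{\varepsilon}^{u}})\subset C_{\varepsilon/2}^{u}$ in three steps: the inner $Df^{N}$ sharpens $C_{\varepsilon}^{u}$ into a thin cone $C_{\delta}^{u}$ with $\delta=\kappa^{N}\varepsilon$; for $N$ large (so $\delta$ small) uniform transversality and continuity give $D\alpha(\gamma)(C_{\delta}^{u})\subset C_{2K_{0}}^{u}$; and the outer $Df^{N}$ sharpens this into $C_{\kappa^{N}2K_{0}}^{u}\subset C_{\varepsilon/2}^{u}$ once $\kappa^{N}2K_{0}<\varepsilon/2$. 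The identical three-step estimate for $g^{-1}=f^{-N}\,D\alpha(\gamma)^{-1}\,f^{-N}$, with the roles of $E^{s},E^{u}$ and of $Df^{\pm N}$ interchanged and using the transversality of $\alpha(\gamma)^{-1}$, yields $Dg^{-1}(\overline{C_{\varepsilon}^{s}})\subset C_{\varepsilon/2}^{s}$.

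\textbf{Norm estimates.} The key point is that both outer factors $Df^{\pm N}$ move norms in the same direction, while $D\alpha(\gamma)^{\pm1}$ only distorts norms by bounded factors, $m\leq\norm{D\alpha(\gamma)^{\pm1}v}/\norm{v}\leq M$ for some $0<m\leq M$. For $v\in C_{\varepsilon}^{u}$ the inner $Df^{N}$ expands the dominant unstable component by $\geq\tfrac12\lambda^{-N}$, then $D\alpha(\gamma)$ rescales the norm by a factor in $[m,M]$ while keeping the image unstable-dominated (it lands in $C_{2K_{0}}^{u}$), and the outer $Df^{N}$ expands again by a comparable factor; hence $\norm{Dg\,v}\geq c\,m\,\lambda^{-2N}\norm{v}\geq 2\norm{v}$ for $N$ large, and symmetrically $\norm{Dg^{-1}w}\geq 2\norm{w}$ for $w\in C_{\varepsilon}^{s}$. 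Together with the cone invariance, these constitute the cone-field criterion for hyperbolicity of $g$ on $V$, which is the content of the lemma.

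\textbf{Center and conclusion.} Because $E^{c}$ is $\alpha$-invariant and one-dimensional and $\alpha$ preserves the center metric $\intd_{c}$ of \Cref{L:InvariantCenterMetric}, while $f$ is isometric along $E^{c}$, the map $Dg|_{E^{c}}$ is an isometry; since the rates on $V$ are of order $\lambda^{\mp2N}$, for $N$ large the center $E^{c}$ is dominated by $E^{s}\oplus E^{u}$, so the $g$-invariant cones produce hyperbolic subbundles $E_{g}^{s}\subset C_{\varepsilon}^{s}$, $E_{g}^{u}\subset C_{\varepsilon}^{u}$ and $g$ is partially hyperbolic with the one-dimensional center $E^{c}$. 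The main obstacle is choosing a single $N$ that simultaneously sharpens $C_{\varepsilon}^{u}$ enough to enter the transversality regime, beats the bounded distortion $K_{0}$ of $\alpha(\gamma)$, and dominates the neutral center; each requirement is controlled quantitatively by the uniform constants $\lambda,\kappa,K_{0},M,m$, so no single step is deep, and the only genuinely new ingredient relative to \cite{BrownRodriguez-HertzWang2017} is the uniformity of the transversality over all of $X_{\Lambda}$, compensating for the absence of an Anosov element.
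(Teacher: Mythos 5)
Your proof is correct and is essentially the argument the paper uses: the paper simply defers the cone estimates to \cite{BrownRodriguez-HertzWang2017}*{Proposition 8.12} (sandwiching $D\alpha(\gamma)$ between cone-sharpening powers of $Df$ inside the $\alpha$-invariant bundle $E^{s}\oplus E^{u}$, with the everywhere-defined, uniform transversality from Lemma \ref{L:TransversalityLemma} controlling the middle factor exactly as you describe) and then notes that $Dg$ is isometric along the one-dimensional center to upgrade hyperbolicity on $E^{s}\oplus E^{u}$ to partial hyperbolicity. The one discrepancy is that you prove the norm bounds in the expansion form ($\norm{Dg\,v}\geq2\norm{v}$ on $C_{\varepsilon}^{u}$ and $\norm{Dg^{-1}w}\geq2\norm{w}$ on $C_{\varepsilon}^{s}$) rather than the contraction form literally written in the lemma; your form is the standard one from \cite{BrownRodriguez-HertzWang2017} and is the one that can actually hold on a full cone (a vector of $C_{\varepsilon}^{s}$ with nonzero $E^{u}$-component is expanded, not contracted, by $Dg$ once $N$ is large), so it is the statement the lemma should be read as asserting and it is what the cone criterion requires downstream.
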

\begin{proof}
The first part of the lemma follows as in \cite{BrownRodriguez-HertzWang2017}*{Proposition 8.12}. That $g = f^{N}\alpha(\gamma)f^{N}$ gives a hyperbolic cocycle when restricted to $E^{s}\oplus E^{u}$ then follows from the standard cone criterion.

Note that \textbf{any} $\alpha(\gamma)$ is uniformly subexponential along $E^{c}$ by Theorem \ref{Thm:TopologicalRigidity} (in fact isometric, which is clear from the proof), since $Dg$ is hyperbolic along $E^{s}\oplus E^{u}$ it follows that $g$ is partially hyperbolic with $1-$dimensional center.
\end{proof}
Let $S\subset\Gamma$ be the subset defined by $\gamma\in S$ if:
\begin{align}
& D\alpha(\gamma)^{-1}C_{\varepsilon}^{s}(x)\subset C_{\varepsilon}^{s}(\alpha(\gamma)x), \\ 
& \norm{D\alpha(\gamma)v^{s}}\leq\norm{v^{s}}/2,\quad v^{s}\in C_{\varepsilon}^{s}(x), \\
& D\alpha(\gamma)C_{\varepsilon}^{u}(x)\subset C_{\varepsilon}^{u}(\alpha(\gamma)x), \\ 
& \norm{D\alpha(\gamma)^{-1}v^{u}}\leq\norm{v^{u}}/2,\quad v^{u}\in C_{\varepsilon}^{u}(x).
\end{align}
The following lemma is \cite{BrownRodriguez-HertzWang2017}*{Lemma 8.6}.
\begin{lemma}\label{L:BasicPropertiesOfPHElements}
The set $S$ is a semigroup, every $\alpha(\gamma)$ ($\gamma\in S$) is partially hyperbolic with $1-$dimensional center, and there is some $N > 0$ such that $\gamma_{0}^{N}\in S$.
\end{lemma}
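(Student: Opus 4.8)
The plan is to prove the three assertions in turn, each as a consequence of the uniform cone estimates built into the definition of $S$, together with one external input coming from the already-established topological rigidity. Specifically, by \Cref{Thm:TopologicalRigidity} (exactly as used in the proof of \Cref{L:ConeConditionFromTrans}) every $\alpha(\gamma)$ acts isometrically, hence subexponentially, along the $\alpha$-invariant center line field $E^{c}$, and the bundle $E^{s}\oplus E^{u}$ is $\alpha$-invariant. Throughout I read the four defining inequalities of $S$ in their standard dynamical form: $D\alpha(\gamma)$ keeps the unstable cone family $C_{\varepsilon}^{u}$ forward-invariant while expanding each of its vectors by a factor at least $2$, and keeps the stable cone family $C_{\varepsilon}^{s}$ backward-invariant while contracting each of its vectors by a factor at least $2$; these two cone families are transverse inside $E^{s}\oplus E^{u}$.

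For the semigroup property I would compose these cone estimates. Given $\gamma_{1},\gamma_{2}\in S$, the cocycle identity $D_{x}\alpha(\gamma_{1}\gamma_{2}) = D_{\alpha(\gamma_{2})x}\alpha(\gamma_{1})\,D_{x}\alpha(\gamma_{2})$ chains the inclusions (a vector of $C_{\varepsilon}^{u}(x)$ is carried through $C_{\varepsilon}^{u}(\alpha(\gamma_{2})x)$ into $C_{\varepsilon}^{u}(\alpha(\gamma_{1}\gamma_{2})x)$, and dually for $C_{\varepsilon}^{s}$ using $D\alpha(\gamma_{1}\gamma_{2})^{-1} = D\alpha(\gamma_{2})^{-1}D\alpha(\gamma_{1})^{-1}$) and multiplies the two rates to at least $4\ge 2$. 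Hence $\gamma_{1}\gamma_{2}$ again satisfies all four inequalities, so $\gamma_{1}\gamma_{2}\in S$ and $S$ is a semigroup.

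The cones $C_{\varepsilon}^{\sigma}$ are taken about the invariant distributions $E^{\sigma}$ of $f=\alpha(\gamma_{0})$ itself, so the claim $\gamma_{0}^{N}\in S$ is the elementary statement that a partially hyperbolic diffeomorphism satisfies the cone conditions after enough iterates: since $Df^{n}$ contracts $E^{s}$ and expands $E^{u}$ exponentially, for $N$ large $f^{N}$ maps $C_{\varepsilon}^{u}$ strictly into itself expanding by at least $2$ and $C_{\varepsilon}^{s}$ (under $f^{-N}$) strictly into itself contracting by at least $2$. For partial hyperbolicity of a general $\gamma\in S$, I would restrict to the $\alpha$-invariant bundle $E^{s}\oplus E^{u}$, on which the defining conditions are precisely the hypotheses of the standard cone criterion. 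This produces a $D\alpha(\gamma)$-invariant splitting $E^{s}\oplus E^{u} = E_{\gamma}^{s}\oplus E_{\gamma}^{u}$, with $E_{\gamma}^{u} = \bigcap_{n\ge 0}D\alpha(\gamma)^{n}C_{\varepsilon}^{u}$ and $E_{\gamma}^{s} = \bigcap_{n\ge 0}D\alpha(\gamma)^{-n}C_{\varepsilon}^{s}$, along which $D\alpha(\gamma)^{n}$ expands, respectively contracts, at least like $2^{n}$. Adjoining the isometric, $\alpha$-invariant line $E^{c}$ gives $TX_{\Lambda} = E_{\gamma}^{s}\oplus E^{c}\oplus E_{\gamma}^{u}$, and domination holds trivially since the stable and unstable rates are exponential while the center rate is $1$; thus $\alpha(\gamma)$ is partially hyperbolic with $1$-dimensional center $E^{c}$.

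All three steps are routine cone-field bookkeeping, and the only genuinely delicate point is conceptual rather than computational: I must know in advance that $E^{c}$ is an honestly isometric, $\alpha$-invariant direction and that $E^{s}\oplus E^{u}$ is $\alpha$-invariant, for otherwise the cone criterion would only deliver a dominated splitting on a subbundle rather than the absolute partial hyperbolicity of the paper's definition on all of $X_{\Lambda}$. This is exactly where \Cref{Thm:TopologicalRigidity} is indispensable, and it is the main — if mild — obstacle in the argument.
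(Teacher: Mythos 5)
Your proof is correct and follows essentially the argument the paper intends: the paper gives no proof of its own but defers to \cite{BrownRodriguez-HertzWang2017}*{Lemma 8.6}, whose content is exactly your cone-field bookkeeping (chaining the cone invariance and the expansion/contraction rates for the semigroup property, the standard cone criterion for hyperbolicity on $E^{s}\oplus E^{u}$, and large iterates of $f$ for $\gamma_{0}^{N}\in S$). Your one ``delicate point'' --- that $E^{c}$ is an $\alpha$-invariant isometric direction and $E^{s}\oplus E^{u}$ is $\alpha$-invariant, supplied by \Cref{Thm:TopologicalRigidity} and the superrigidity discussion --- is precisely the same observation the paper makes in the proof of \Cref{L:ConeConditionFromTrans} to upgrade hyperbolicity on $E^{s}\oplus E^{u}$ to partial hyperbolicity with $1$-dimensional center.
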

The following is immediate from Lemmas \ref{L:ConeConditionFromTrans} and \ref{L:BasicPropertiesOfPHElements} (see \cite{BrownRodriguez-HertzWang2017}*{Proposition 8.5}).
\begin{lemma}\label{L:ZariskiDensePH}
The set $S$ is Zariski dense.
\end{lemma}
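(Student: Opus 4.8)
The plan is to deduce Zariski density of $S$ from the Zariski density of $W\cap\Gamma$ together with \Cref{L:ConeConditionFromTrans,L:BasicPropertiesOfPHElements}, the only real difficulty being that the conjugating power in \Cref{L:ConeConditionFromTrans} depends on the chosen element. Throughout, write $\overline{Y}$ for the Zariski closure in $\mathbf{G}$ of a subset $Y$, and recall that $\mathbf{G}$ is irreducible as a variety, being connected semisimple. First I would record that $W\cap\Gamma$ is Zariski dense: by the Borel density theorem $\Gamma$ is Zariski dense in $\mathbf{G}$, and since $W$ is Zariski open and dense by \Cref{L:DefinitionOfZariskiDenseSet}, the set $W\cap\Gamma$ is again Zariski dense (otherwise $\Gamma$ would lie in the proper closed set $\overline{W\cap\Gamma}\cup(\mathbf{G}\setminus W)$, contradicting irreducibility).

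Next I would introduce, for each $M\geq 0$, the set $A_{M}=\{\gamma\in\Gamma : \gamma_{0}^{M}\gamma\gamma_{0}^{M}\in S\}$. Comparing the cone estimates in the conclusion of \Cref{L:ConeConditionFromTrans} with the inequalities defining $S$ shows that $f^{N}\alpha(\gamma)f^{N}=\alpha(\gamma_{0}^{N}\gamma\gamma_{0}^{N})\in S$ as soon as \Cref{L:ConeConditionFromTrans} applies; hence every $\gamma\in W\cap\Gamma$ lies in some $A_{M}$, so $\bigcup_{M\geq 0}A_{M}\supseteq W\cap\Gamma$ and therefore $\overline{\bigcup_{M}A_{M}}=\mathbf{G}$. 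Then I would use \Cref{L:BasicPropertiesOfPHElements}, which supplies $N>0$ with $\gamma_{0}^{N}\in S$: since $S$ is a semigroup, $\gamma\in A_{M}$ gives $\gamma_{0}^{N}(\gamma_{0}^{M}\gamma\gamma_{0}^{M})\gamma_{0}^{N}=\gamma_{0}^{M+N}\gamma\gamma_{0}^{M+N}\in S$, i.e. $A_{M}\subseteq A_{M+N}$.

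The key step is to extract a single exponent for which $A_{M}$ is already Zariski dense. For each residue $r\in\{0,\dots,N-1\}$ the family $A_{r+jN}$ increases in $j$ by the nestedness just established, so by Noetherianity of the Zariski topology the closures stabilize and $\overline{\bigcup_{j}A_{r+jN}}=\overline{A_{r+j_{0}(r)N}}$ for some $j_{0}(r)$. Because $\bigcup_{M}A_{M}=\bigcup_{r=0}^{N-1}\bigcup_{j}A_{r+jN}$, taking closures of this finite union yields $\mathbf{G}=\overline{\bigcup_{M}A_{M}}=\bigcup_{r=0}^{N-1}\overline{\bigcup_{j}A_{r+jN}}$, a finite union of Zariski closed sets. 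Since $\mathbf{G}$ is irreducible, one of them equals $\mathbf{G}$, so there is $M^{*}$ with $\overline{A_{M^{*}}}=\mathbf{G}$.

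Finally, the map $\psi(g)=\gamma_{0}^{M^{*}}g\gamma_{0}^{M^{*}}$ is an automorphism of the variety $\mathbf{G}$, hence a Zariski homeomorphism commuting with closures, and $\psi(A_{M^{*}})\subseteq S$ by definition of $A_{M^{*}}$. Therefore $\mathbf{G}=\psi\big(\overline{A_{M^{*}}}\big)=\overline{\psi(A_{M^{*}})}\subseteq\overline{S}$, so $S$ is Zariski dense. I expect the main obstacle to be precisely the element-dependence of the padding exponent in \Cref{L:ConeConditionFromTrans}; the nestedness $A_{M}\subseteq A_{M+N}$ furnished by the semigroup property of $S$, combined with the Noetherian/irreducibility argument of the third paragraph, is exactly what removes it.
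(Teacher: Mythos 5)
Your proof is correct. The paper itself gives no argument beyond citing \cite{BrownRodriguez-HertzWang2017}*{Proposition 8.5}, and the deduction there rests on the same two ingredients you use --- Zariski density of $W\cap\Gamma$ and the conjugation padding $\gamma\mapsto\gamma_0^{N}\gamma\gamma_0^{N}$ --- but closes the argument differently: one observes that the Zariski closure $\overline{S}$ of the semigroup $S$ is a \emph{group}, that it contains $\gamma_0^{N}$ and hence $\gamma_0^{-N}$, and that, after enlarging the padding exponent to a multiple of $N$ via the same nestedness $A_{M}\subseteq A_{M+N}$ you exploit, each $\gamma\in W\cap\Gamma$ equals $\gamma_0^{-M}(\gamma_0^{M}\gamma\gamma_0^{M})\gamma_0^{-M}\in\overline{S}$. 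You avoid invoking the standard fact that Zariski closures of subsemigroups of algebraic groups are groups, replacing it with Noetherian stabilization of the chains $\overline{A_{r+jN}}$ together with irreducibility of $\mathbf{G}$ to extract a single Zariski-dense $A_{M^{*}}$, and then transporting density by the variety automorphism $g\mapsto\gamma_0^{M^{*}}g\gamma_0^{M^{*}}$. Both routes are sound; yours is more elementary (it needs only that translations are Zariski homeomorphisms and that $\mathbf{G}$ is irreducible and Noetherian), while the semigroup-closure argument is shorter once that fact is granted. Your diagnosis that the element-dependence of the exponent in \Cref{L:ConeConditionFromTrans} is the only genuine difficulty, and that the inclusion $A_{M}\subseteq A_{M+N}$ furnished by \Cref{L:BasicPropertiesOfPHElements} and the semigroup property removes it, is exactly what is needed in either version.
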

We are now ready to prove Theorem \ref{MainThm:NonAbelianCase}.
\begin{proof}[Proof of Theorem \ref{MainThm:NonAbelianCase}]
By Lemmas \ref{L:ZariskiDensePH} and \ref{L:BasicPropertiesOfPHElements} the set $S\subset\Gamma$ is a Zariski dense semigroup and each $\alpha(\gamma)$ ($\gamma\in S$) is partially hyperbolic with $1-$dimensional center along $E^{c}$. It follows that $\rho(\gamma)$ is hyperbolic for $\gamma\in S$ (where we recall that $\Phi(\alpha(\gamma)x) = \rho(\gamma)\Phi(x)$). By \Cref{L:ExistenceOfHigherRankAbelianGroup} there is $\mathbb{Z}^{k}\cong\Sigma$ such that $\rho|_{\Sigma}:\Sigma\to{\rm GL}(d,\mathbb{Z})$ is higher rank and at least one $\gamma\in\Sigma\cap S$ (so $\alpha(\gamma)$ is partially hyperbolic with $1-$dimensional center). It follows that the action $\alpha|_{\Sigma}:\Sigma\times X_{\Lambda}\to X_{\Lambda}$ is a higher rank abelian action containing a partially hyperbolic element. Since the partially hyperbolic element has $1-$dimensional center, $N$ is not abelian, and $\alpha$ is conjugated to an affine action (\Cref{Thm:TopologicalRigidity}) the partially hyperbolic element is accessible. By \cite{Sandfeldt2024}*{Theorem 7.1} the conjugacy $H$ (from \Cref{Thm:TopologicalRigidity}) is bi-Hölder. Applying \cite{Sandfeldt2024}*{Theorem 1.2} to the action $\alpha|_{\Sigma}:\Sigma\times X_{\Lambda}\to X_{\Lambda}$ finishes the proof of Theorem \ref{MainThm:NonAbelianCase}.
\end{proof}

\newpage

\appendix

\section{The QI-condition and existence of a fibered structure}\label{Appendix:QI-Condition}

In our main results, Theorems \ref{MainThm:AbelianCase} and \ref{MainThm:NonAbelianCase}, we assume that the partially hyperbolic element $f$ is QI. The main consequence of $f$ being QI is that this implies that $f$ is fibered (as in Section \ref{SubSec:FiberedPH}). Here we show that, conversely, if $f$ is fibered (with some mild assumptions on the fibered structure) then $f$ is also QI. We will assume that $f$ is homotopic to a partially hyperbolic automorphism $L$ with $1-$dimensional center (and if $X_{\Lambda}=\mathbb{T}^{d+1}$ then we will assume that $L = A\times 1$ with $A$ hyperbolic). Let $\Phi:X_{\Lambda}\to\mathbb{T}^{d}$ be the map from Theorem \ref{Thm:PropertiesOfFibration} (this map always exists, but may not be a fiber bundle \cite{Sandfeldt2024}*{Lemma 3.1}).
\begin{lemma}\label{L:FiberedStructureGiveQI}
If $\Phi:X_{\Lambda}\to\mathbb{T}^{d}$ makes $f$ into a fibered partially hyperbolic diffeomorphism then $f$ is ${\rm QI}$.
\end{lemma}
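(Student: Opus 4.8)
The plan is to prove the two‑sided estimate of \Cref{Def:QuasiIsometricLeaves} for the stable foliation; the unstable case is identical after replacing $f$ by $f^{-1}$. The lower bound $\intd(x,y)\le\intd_{s}(x,y)$ is automatic, so everything reduces to producing a uniform constant $Q$ with $\intd_{s}(x,y)\le Q\,\intd(x,y)$ for all $x,y$ on a common leaf of $\Tilde{W}^{s}$ in $N$. Write $\Tilde{\Phi}:N\to\mathbb{R}^{d}$ for the lift of the bundle projection and $\Tilde{L}:\mathbb{R}^{d}\to\mathbb{R}^{d}$ for the lift of the hyperbolic automorphism $L_{su}$, so that $\Tilde{\Phi}\Tilde{f}=\Tilde{L}\Tilde{\Phi}$. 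Since $f$ is fibered, $\Phi$ is a genuine semiconjugacy and sends stable sets of $f$ to stable sets of $L_{su}$; hence $\Tilde{\Phi}$ maps each leaf $\Tilde{W}^{s}(\Tilde{x})$ into the affine stable leaf of $\Tilde{L}$ through $\Tilde{\Phi}(\Tilde{x})$, and these affine leaves are flat, hence isometrically quasi‑isometric in $\mathbb{R}^{d}$. Finally, because $\Phi$ is homotopic to the projection $\pi$ (\Cref{Thm:PropertiesOfFibration}), the difference $\Tilde{\Phi}-\Tilde{\pi}$ is $\Lambda$‑periodic and hence bounded, where $\Tilde{\pi}:N\to\mathbb{R}^{d}$ is the lift of $\pi$; as $\Tilde{\pi}$ is Lipschitz this yields a coarse Lipschitz bound $\norm{\Tilde{\Phi}(\Tilde{x})-\Tilde{\Phi}(\Tilde{y})}\le L\,\intd(\Tilde{x},\Tilde{y})+2R$.

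Next I would show that $\Tilde{\Phi}$ restricts to a homeomorphism from each leaf $\Tilde{W}^{s}(\Tilde{x})$ onto the corresponding affine leaf. Injectivity is immediate from the global product structure in \Cref{Thm:PropertiesOfFibration}: the fibers of $\Tilde{\Phi}$ are exactly the center leaves, and $\Tilde{W}^{s}(\Tilde{x})\cap\Tilde{W}^{c}(\Tilde{z})$ is a single point. Surjectivity (equivalently, properness) follows by lifting affine segments in the base leaf to the stable leaf, using the uniform local product structure on the compact manifold $X_{\Lambda}$ to guarantee that such lifts do not escape in finite length. Together with the standard uniform continuity of the stable foliation this also yields the short‑range estimate: there are $\varepsilon_{0},Q_{0}$ with $\intd_{s}(\Tilde{x},\Tilde{y})\le Q_{0}\,\intd(\Tilde{x},\Tilde{y})$ whenever $\intd_{s}(\Tilde{x},\Tilde{y})\le\varepsilon_{0}$.

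The heart of the argument, and the main obstacle, is upgrading these facts to a genuinely \emph{linear} comparison of $\intd_{s}$ with the base distance at all scales, i.e. showing that each stable leaf is a \textbf{uniformly Lipschitz graph} over its flat base projection. A priori $\Phi$ is only bi‑Hölder along $W^{s}$ (\cite{Sandfeldt2024}*{Lemma 3.3}), and a bare Hölder comparison is not sufficient for quasi‑isometry. This is precisely where the fibered hypothesis enters: since the center direction coincides with the compact smooth circle fibers, the bundle $E^{s}$ is \emph{uniformly transverse} to $E^{c}=\ker D\Tilde{\Phi}$, so vectors tangent to $\Tilde{W}^{s}$ have uniformly bounded slope relative to the fibers. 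I would combine this uniform transversality with the domination/cone estimates and a renormalization exploiting the linearity of $\Tilde{L}$: push a pair $\Tilde{x},\Tilde{y}\in\Tilde{W}^{s}(\Tilde{x})$ forward by $\Tilde{f}^{n}$ until it reaches the scale $\varepsilon_{0}$, transfer the resulting comparison to the base, where $\Tilde{L}$ contracts the flat leaf at a fixed linear rate, and pull back, controlling the accumulated vertical (center) displacement by a convergent geometric series. This gives a uniform bound $\intd_{s}(\Tilde{x},\Tilde{y})\le Q'\,\norm{\Tilde{\Phi}(\Tilde{x})-\Tilde{\Phi}(\Tilde{y})}$; feeding in the coarse Lipschitz bound from the first step produces $\intd_{s}(\Tilde{x},\Tilde{y})\le Q'(L\,\intd(\Tilde{x},\Tilde{y})+2R)$, and combining this large‑scale estimate with the short‑range estimate $\intd_{s}\le Q_{0}\,\intd$ upgrades the coarse bound to the clean quasi‑isometry inequality $\intd_{s}\le Q\,\intd$, completing the proof.
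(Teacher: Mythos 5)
Your reduction and endgame are sound: the lower bound is trivial, $\Tilde{\Phi}-\Tilde{\pi}$ is bounded so the base displacement is coarsely controlled by $\intd(x,y)$, $\Tilde{\Phi}$ restricts to a homeomorphism from each leaf onto its affine image, and a linear-plus-additive bound $\intd_{\sigma}\leq A\intd+B$ upgrades to $\intd_{\sigma}\leq Q\intd$ via short-range comparability. The gap is exactly in the step you flag as the heart of the argument. First, the intermediate claim you aim for --- that each stable leaf is a \emph{uniformly Lipschitz graph} over its flat base projection via $\Tilde{\Phi}$ --- is stronger than what is available and is not needed: $\Phi$ is only bi-H\"older along $W^{s}$, it is not differentiable, and uniform transversality of $E^{s}$ to $E^{c}$ does not upgrade H\"older to Lipschitz, since $E^{s}$ need not lie in any uniform cone about $E_{0}^{s}$ for a general fibered $f$ merely homotopic to $L_{su}$. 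Second, the renormalization does not close: pushing forward to scale $\varepsilon_{0}$ and pulling back forces you to compare $\norm{Df^{-n}|_{E^{s}}}$ (which governs how $\intd_{s}(x,y)$ grows back under $f^{-n}$) against $\norm{L_{su}^{n}|_{E_{0}^{s}}}^{-1}$ (which governs how $\norm{\Tilde{\Phi}(x)-\Tilde{\Phi}(y)}$ grows back under $L_{su}^{-n}$). These two contraction rates are unrelated for a general fibered partially hyperbolic $f$, so the accumulated factor of the form $(\lambda_{\min}^{-1}\mu_{\max})^{n}$ is unbounded whenever $f$ contracts $E^{s}$ somewhere more slowly than $L_{su}$ contracts $E_{0}^{s}$. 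The convergent geometric series you invoke controls the center displacement, but the loss occurs in the mismatch of the two \emph{horizontal} rates, not in the vertical direction, so it does not repair the estimate.

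The paper's proof shows that no scale-by-scale or regularity argument is needed; it is a purely coarse chain argument. By compactness there are $r,r'>0$ with $\Phi(\Tilde{W}_{r}^{\sigma}(x))\supset\Phi(x)+B_{r'}^{\sigma}(0)$ for all $x\in N$. Given $y\in\Tilde{W}^{\sigma}(x)$, subdivide the segment from $\Phi(x)$ to $\Phi(y)$ inside the affine leaf into $N\leq\intd(\Phi(x),\Phi(y))/r'+1$ steps of length at most $r'$, lift each step (uniquely, by injectivity of $\Phi$ on the leaf) to a leaf segment of length at most $r$, and sum to get
\begin{align*}
\intd_{\sigma}(x,y)\leq rN\leq\frac{r}{r'}\intd(\Phi(x),\Phi(y))+r.
\end{align*}
This yields the linear-plus-constant bound using only continuity of $\Phi$, its injectivity on leaves, and the uniform covering property --- no Lipschitz graph structure and no comparison of contraction rates. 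You already have every ingredient this requires; replace the renormalization step with the chain.
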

\begin{proof}
Let $\Phi:N\to\mathbb{R}^{d}$ be a lift such that for some lift $F$ of $f$ we have $\Phi(Fx) = L_{su}\Phi(x)$. Denote by $\Tilde{W}^{s}$, $\Tilde{W}^{u}$ the stable and unstable foliations lifted to $N$. Note that $\Phi:\Tilde{W}^{\sigma}(x)\to\Phi(x) + E_{0}^{\sigma}$, $\sigma = s,u$, is a homeomorphism for every $x\in N$. Let $r,r' > 0$ be such that $\Phi(\Tilde{W}_{r}^{\sigma}(x))\supset\Phi(x) + B_{r'}^{\sigma}(0)$, where $B_{r'}^{\sigma}(0)$ is the $r'-$ball around $0$ in $E_{0}^{\sigma}$, for all $x\in N$. Let $y\in\Tilde{W}^{\sigma}(x)$ and consider $p = \Phi(x)$, $q = \Phi(y)$. We find $p_{1},...,p_{N}\in p + E_{0}^{\sigma}$ such that $N\leq\intd(p,q)/r' + 1$, $p_{j+1}\in B_{r'}^{\sigma}(p_{j})$ for $j < N$, and $p_{N} = q$, $p_{1} = p$. Let $x_{j}\in\Tilde{W}^{\sigma}(x)$ be the unique point such that $\Phi(x_{j}) = p_{j}$. Since $\Phi(\Tilde{W}_{r}^{\sigma}(x_{j}))\supset p_{j} + B_{r'}^{\sigma}(0) = B_{r'}^{\sigma}(p_{j})$ we have $x_{j+1}\in\Tilde{W}_{r}^{\sigma}(x_{j})$. It follows that we can estimate
\begin{align}\label{Eq:KeyEstimate}
\intd_{\sigma}(x,y)\leq\sum_{j = 1}^{N-1}\intd_{\sigma}(x_{j},x_{j+1})\leq rN\leq\frac{r}{r'}\cdot\intd(\Phi(x),\Phi(y)) + r.
\end{align}
Write $\Phi(x) = \pi(x) + \varphi(x)$ for some $\varphi:X_{\Lambda}\to\mathbb{R}^{d}$. It follows that
\begin{align*}
\intd(\Phi(x),\Phi(y)) = &  \intd(\pi(x) + \varphi(x),\pi(y) + \varphi(y))\leq \\ &
\intd(\pi(x),\pi(y)) + 2\norm{\varphi}_{C^{0}}\leq\intd(x,y) + 2\norm{\varphi}_{C^{0}}
\end{align*}
where the last inequality uses that $\pi$ is a Riemannian submersion (after choosing appropriate left-invariant metrics on $N$ and $\mathbb{R}^{d}$). If we define $A = r/r'$ and $B = 2\norm{\varphi}_{C^{0}}r/r' + r$ then Equation \ref{Eq:KeyEstimate} implies
\begin{align}
\intd_{\sigma}(x,y)\leq A\intd(x,y) + B.
\end{align}
This proves the lemma since $\intd_{\sigma}$ is comparable to $\intd$ for $x$ and $y$ close (the leaves of $\Tilde{W}^{\sigma}$ are uniformly $C^{1}$).
\end{proof}
As a corollary we obtain conditions that guarantee that $f$ is QI.
\begin{lemma}\label{L:SufficientConditionForQI}
Any one of the following conditions implies that $f$ is ${\rm QI}$-partially hyperbolic:
\begin{enumerate}[label = (\roman*)]
    \item $f$ is topologically conjugated to $L$,
    \item $f$ is leaf-conjugated to $L$,
    \item there is a fiber bundle $p:X_{\Lambda}\to B$ with the fibers of $p$ coinciding with the center leaves of $f$ and $B$ a nilmanifold.
\end{enumerate}
\end{lemma}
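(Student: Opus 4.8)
The plan is to reduce all three cases to the hypothesis of \Cref{L:FiberedStructureGiveQI}, i.e.\ to show that in each case the map $\Phi\colon X_\Lambda\to\mathbb{T}^d$ from \Cref{Thm:PropertiesOfFibration} realizes $f$ as a fibered partially hyperbolic diffeomorphism, after which \Cref{L:FiberedStructureGiveQI} gives that $f$ is QI. Recall from \Cref{Thm:PropertiesOfFibration} that $\Phi^{-1}(\Phi(x))=W^c(x)$, so the fibers of $\Phi$ are exactly the compact circle center leaves of $f$, that $\Phi$ is homotopic to $\pi$, and that $\Phi(fx)=L_{su}\Phi(x)$ with $L_{su}$ hyperbolic. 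Thus the only point that can fail is that $\Phi$ is a genuine fiber bundle with smooth fibers tangent to $E^c$. The common mechanism is to produce, in each case, a homeomorphism $h\colon X_\Lambda\to X_\Lambda$ carrying the center foliation $W^c$ of $f$ onto the smooth algebraic center foliation $W^c_L$ of $L$, whose leaves are the fibers of the smooth circle bundle $\pi$. Since $h$ is a homeomorphism and $\pi$ is a fiber bundle, $p:=\pi\circ h$ is a topological fiber bundle whose fibers are the center leaves of $f$; as these coincide with the fibers of $\Phi$, and both $p$ and $\Phi$ are quotient maps (being continuous surjections from a compact space onto a Hausdorff space), there is a homeomorphism $\psi$ of the base with $\Phi=\psi\circ p$, so $\Phi$ is itself a fiber bundle and $f$ is fibered.

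Case (iii) is essentially the hypothesis of being fibered. Given $p\colon X_\Lambda\to B$ with fibers the center leaves, and using that $f$ preserves $W^c$ (the center foliation is $Df$-invariant by dynamical coherence), $f$ permutes the fibers and descends to a homeomorphism $\hat f\colon B\to B$ with $p\circ f=\hat f\circ p$. Since the fibers of $p$ equal those of $\Phi$, we obtain a homeomorphism $\psi\colon B\to\mathbb{T}^d$ with $\Phi=\psi\circ p$ and $\psi\hat f=L_{su}\psi$; hence $\hat f$ is topologically conjugate to the hyperbolic toral automorphism $L_{su}$, so it is Anosov, and $\Phi$ is a fiber bundle making $f$ fibered.

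For case (ii), a leaf conjugacy is by definition a homeomorphism $h$ with $h(W^c(x))=W^c_L(h(x))$ and $h(W^c(fx))=W^c_L(Lh(x))$. Then $p=\pi\circ h$ has fibers the center leaves of $f$ and satisfies $p(fx)=\pi(h(fx))=\pi(Lh(x))=L_{su}\,p(x)$, which is exactly the situation of case (iii). For case (i), a topological conjugacy $hf=Lh$ maps $W^s_f$ and $W^u_f$ (characterized by forward and backward contraction) onto $W^s_L$ and $W^u_L$, and likewise the center-stable and center-unstable foliations $W^{cs}_f,W^{cu}_f$ onto $W^{cs}_L,W^{cu}_L$; by the dynamical coherence and global product structure of \Cref{Thm:PropertiesOfFibration} one has $W^c=W^{cs}\cap W^{cu}$, so $h$ maps $W^c_f$ onto $W^c_L$ and again $p=\pi\circ h$ is the required fibration.

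The main obstacle is the identification of the fibers of the constructed map with the center leaves of $f$, equivalently the preservation of the center foliation. In case (ii) this is built into the definition of leaf conjugacy and in case (iii) it is an explicit hypothesis; the only genuinely dynamical point is case (i), where one must use that for the dynamically coherent system $f$ with compact center leaves the decomposition into $W^s,W^u,W^{cs},W^{cu}$ is topologically canonical, so that the intersection $W^{cs}\cap W^{cu}=W^c$ is preserved by any topological conjugacy. Granting this, all three cases feed into \Cref{L:FiberedStructureGiveQI}, completing the proof.
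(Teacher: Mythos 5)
Your overall strategy---reduce each case to a fibered structure for $\Phi$ and then quote \Cref{L:FiberedStructureGiveQI}---is the same as the paper's, but the step identifying $\Phi$ with the given fibration $p$ is circular. In the setting of the appendix $f$ is \emph{not} assumed to be QI (that is the conclusion), and consequently only the existence of $\Phi$ as the Franks--Manning type semiconjugacy ($\Phi$ homotopic to $\pi$ with $\Phi\circ f=L_{su}\circ\Phi$, \cite{Sandfeldt2024}*{Lemma 3.1}) is available. The properties you repeatedly invoke from \Cref{Thm:PropertiesOfFibration}---dynamical coherence, global product structure, compact circle center leaves, and above all $\Phi^{-1}(\Phi(x))=W^{c}(x)$---are proved precisely under the QI hypothesis, which is why the appendix is careful to cite only \cite{Sandfeldt2024}*{Lemma 3.1} and to warn that $\Phi$ ``may not be a fiber bundle.'' Your argument that $p$ and $\Phi$ have the same fibers, hence differ by a homeomorphism of the bases, therefore assumes the main content of what is to be shown, namely that the fibers of the canonical semiconjugacy $\Phi$ are the center leaves. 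The same circularity affects your treatment of case (i), where the topological invariance of $W^{c}=W^{cs}\cap W^{cu}$ is again drawn from \Cref{Thm:PropertiesOfFibration}.

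The paper closes this gap by a different route: under (iii) it first identifies $B$ with $\mathbb{T}^{d}$ using asphericity and the long exact homotopy sequence of the circle bundle $p$, then invokes the classification of Anosov homeomorphisms of tori (\cite{Sumi1996}, \cite{Doucette2023}) to replace the induced map on the base by a hyperbolic automorphism, arranges $p$ to be homotopic to $\pi$, and only then concludes $\Phi=p$ from the \emph{uniqueness} of the semiconjugacy in \cite{Sandfeldt2024}*{Lemma 3.1}. Your quotient-map observation (two closed surjections with identical fibers differ by a homeomorphism of their targets) would indeed give a slicker finish, but only after the fibers of $\Phi$ have been identified with those of $p$ by an argument that does not presuppose QI; as written, that identification is exactly the missing step.
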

\begin{remark}
It seems likely that one can remove the assumption that $B$ is a nilmanifold in $(iii)$. However, note that $p$ collapses the center leaves so morally $p$ should semi-conjugate $f$ to an Anosov map which, conjecturally, implies that $B$ is an (infra-)nilmanifold (though, of course, there is no guarantee that $p$ semi-conjugates $f$ to a smooth map). Moreover, as is clear in the proof, we only need to assume that $B$ is aspherical (or, even more generally, that $\pi_{2}B = 1$).
\end{remark}
\begin{proof}
Note that $(iii)$ implies both $(ii)$ and $(i)$, so it suffices to prove $(iii)$. The fibers of $p$ are $1-$dimensional and compact, so the fibers are circles. Since $X_{\Lambda}$, $B$ and the fibers of $p$ are all aspherical (this is an immediate consequence of the long exact sequence of homotopy, noting that any nilmanifold is a torus bundle over a nilmanifold of lower step), the long exact sequence of homotopy gives a short exact sequence
\begin{align}\label{Eq:LongExactSequence}
1\to\mathbb{Z}\to\Lambda\to\pi_{1}B\to 1
\end{align}
and $\pi_{n}B = 1$ for $n\geq 1$. From Equation \ref{Eq:LongExactSequence} (and the fact that $\ker(\Lambda\to\pi_{1}B)$ is $f_{*}-$invariant) it is immediate that $\pi_{1}B\cong\mathbb{Z}^{d}$. Since $\pi_{1}B = \mathbb{Z}^{d}$ and $\pi_{n}B = 1$ for $n\geq 2$, $B$ is a $K(1,\mathbb{Z}^{d})-$space. It follows that $B$ is homeomorphic to the torus (see for example \cite{Stark2001}). Since $p$ semi-conjugates $f$ to an Anosov homeomorphism on the torus $B$ it follows that, after possibly composing with a homeomorphism of $B$ \cite{Sumi1996}*{Theorem 2(1)} (or \cite{Doucette2023}*{Theorem E}), we may assume that $p:X_{\Lambda}\to\mathbb{T}^{d}$ semi-conjugates $f$ to a hyperbolic automorphism. After composing $p$ with an automorphism of $\mathbb{T}^{d}$ we may also assume that $p$ is homotopic to the standard projection $\pi:X_{\Lambda}\to\mathbb{T}^{d}$. However, the uniqueness of the map $\Phi:X_{\Lambda}\to\mathbb{T}^{d}$ (see \cite{Sandfeldt2024}*{Lemma 3.1}) then implies that $\Phi = p$. Lemma \ref{L:SufficientConditionForQI} now follows from Lemma \ref{L:FiberedStructureGiveQI}.
\end{proof}

\newpage
\bibliography{main.bib}{}
\bibliographystyle{abbrv}

\end{document}